\let\oldbibliography\thebibliography
\renewcommand{\thebibliography}[1]{%
\oldbibliography{#1}%
\setlength{\itemsep}{0pt}%
}
\newtheorem{theorem}{Theorem}[section]
\newtheorem{lemma}[theorem]{Lemma} 
\newtheorem{proposition}[theorem]{Proposition}
\newtheorem{definition}[theorem]{Definition}
\newtheorem{example}[theorem]{Example}
\newtheorem{corollary}[theorem]{Corollary}
\newtheorem{remark}[theorem]{Remark}
\newcommand{\Rn}{\mathbb R^n}
\newcommand{\R}{\mathbb R}
\newcommand{\bt}{\begin{theorem}}
\newcommand{\et}{\end{theorem}}
\newcommand{\bl}{\begin{lemma}}
\newcommand{\el}{\end{lemma}}
\newcommand{\bd}{\begin{definition}}
\newcommand{\ed}{\end{definition}}
\newcommand{\bc}{\begin{corollary}}
\newcommand{\ec}{\end{corollary}}
\newcommand{\bp}{\begin{proof}}
\newcommand{\ep}{\end{proof}}
\newcommand{\bx}{\begin{example}}
\newcommand{\ex}{\end{example}}
\newcommand{\bi}{\begin{exercise}}
\newcommand{\ei}{\end{exercise}}
\newcommand{\bo}{\begin{prop}}
\newcommand{\eo}{\end{prop}}
\newcommand{\br}{\begin{remark}}
\newcommand{\er}{\end{remark}}
\newcommand{\be}{\begin{equation}}
\newcommand{\ee}{\end{equation}}
\newcommand{\ba}{\begin{align}}
\newcommand{\ea}{\end{align}}
\newcommand{\bn}{\begin{enumerate}}
\newcommand{\en}{\end{enumerate}}
\newcommand{\bg}{\begin{align*}}
\newcommand{\bcs}{\begin{cases}}
\newcommand{\ecs}{\end{cases}}
\newcommand{\bean}{\begin{eqnarray*}}
\newcommand{\eean}{\end{eqnarray*}}
\numberwithin{equation}{section}
\begin{document}
\title{ {\bf  Liouville-type  theorems,  radial symmetry and integral representation of solutions to Hardy-H\'enon equations involving higher order fractional Laplacians}   } 
\date{\today}     
\author{\\ { Hui Yang\thanks{School of Mathematical Sciences, Shanghai Jiao Tong University, Shanghai 200240, China. Email address: \textsf{hui-yang@sjtu.edu.cn}. } }  }

\maketitle    

\begin{center}
\begin{minipage}{130mm}
\begin{center}{\bf Abstract}\end{center}
We study nonnegative solutions to the following Hardy-H\'enon  type equations involving higher order fractional Laplacians  
$$
(-\Delta)^\sigma u = |x|^{-\alpha}u^{p} ~~~~~~  \textmd{in} ~ \mathbb{R}^n \backslash \{0\} 
$$
with a possible singularity at the origin, where $\sigma$ is a real number satisfying $0 < \sigma < n/2$, $-\infty < \alpha < 2\sigma$ and $p>1$.  By a more direct approach without using the super poly-harmonic properties, we establish an integral representation for nonnegative solutions to the above higher order fractional equations whether the singularity $\{0\}$ is removable or not. As the first application, we prove an optimal Liouville-type theorem for the above equations with removable singularity for all $\sigma \in (0, n/2)$ when 
$$
1 < p < p_{\sigma,\alpha}^*:=\frac{n+2\sigma -2\alpha}{n-2\sigma}~~~ \textmd{and} ~~~  -\infty < \alpha < 2\sigma. 
$$
This, in particular, covers a gap occurring for non-integral $\sigma  \in (1, n/2)$ and $\alpha \in (0,  2\sigma)$ in the current literature. 
As the second application, we show the radial symmetry of solutions in the critical case or in the case when the origin is a non-removable singularity.  Such radial symmetry would be useful in studying the singular Yamabe-type problems. 

\vskip0.10in

\noindent{\it Keywords:} Higher order fractional Laplacians, integral representation, Liouville-type theorems, radial symmetry, Hardy-H\'enon equations

\vskip0.10in

\noindent {\it Mathematics Subject Classification (2020):}  35J30; 35R11; 35B06; 35C15  

\end{minipage}

\end{center}

\section{Introduction and main results} 
Problems involving the fractional Laplacian operator and other nonlocal diffusion operators have been extensively studied in recent years.  Such nonlocal problems arise naturally in mathematical models from various physical phenomena,  such as anomalous diffusion and quasi-geostrophic flows, turbulence and water waves, molecular dynamics, and relativistic quantum mechanics of stars (see \cite{BG,CV,Con} and the references therein). Moreover, the fractional Laplacian and higher order Laplacian also appear in many areas of mathematics,  including  conformal geometry \cite{CC,CG},  probability and finance \cite{App,ContT},  obstacle problems \cite{CSS,Silv}, nonlinear elasticity and crystal dislocation \cite{Ann,DFV}.

In this paper, we focus on the nonnegative solutions to the following Hardy-H\'enon  type equations involving higher order fractional Laplacians   
\begin{equation}\label{Har=01}
(-\Delta)^\sigma u = |x|^{-\alpha}u^{p} ~~~~~~  \textmd{in} ~ \mathbb{R}^n \backslash \{0\}
\end{equation}
which have a possible singularity at the origin, where $\sigma$ is a real number satisfying $0 < \sigma < n/2$, $-\infty < \alpha < 2\sigma$ and $p>1$.   The fractional Laplacian is defined, for $\sigma \in (0, 1)$,  by 
\begin{equation}\label{Fda-Lapcc=01}
(-\Delta)^\sigma u(x) = c_{n,\sigma}  \textmd{P.V.}  \int_{\mathbb{R}^n} \frac{u(x) - u(z)}{|x - z|^{n+2\sigma}} dz
\ee
while, for higher powers, say $\sigma=k + \sigma^\prime$ with $k \in \mathbb{N}$ and $\sigma^\prime \in (0, 1)$, 
\begin{equation}\label{Fda-Lapcc=02}
(-\Delta)^\sigma = (-\Delta)^{\sigma^\prime} \circ (-\Delta)^k. 
\ee

When $\sigma=1$, $0\leq  \alpha<2$ and $p=2^*(\alpha)-1$ with $2^*(\alpha):=\frac{2(n-\alpha)}{n-2}$,  Eq. \eqref{Har=01} arises as an Euler-Lagrange equation of a functional associated with the following Hardy-Sobolev inequality: There exists $C>0$ such that for all $u \in \mathcal{D}^{1,2} (\mathbb{R}^n)$, 
\begin{equation}\label{Har-Sob=01}
\left(\int_{\mathbb{R}^n} \frac{|u|^{2^*(\alpha)}}{|x|^\alpha} dx \right)^{\frac{2}{2^*(\alpha)}} \leq C \int_{\mathbb{R}^n} |\nabla u|^2 dx. 
\end{equation}
In particular, when $\alpha=0$, \eqref{Har-Sob=01} becomes the classical Sobolev inequality whose best constant and extremal functions have been identified by Aubin \cite{A} and Talenti \cite{T}.  Furthermore, the best constant in the Hardy--Sobolev inequality \eqref{Har-Sob=01} was first computed by Glaser-Martin-Grosse-Thirring \cite{GMGT} and extremal functions were identified by Lieb \cite{Lieb}.  For general $\sigma \in (0, n/2)$, Eq. \eqref{Har=01} with $0 \leq  \alpha < 2\sigma$  is also closely related to the higher order fractional Hardy-Sobolev inequality (see, e.g., \cite{MN,YJ}).  On the other hand,   Eq. \eqref{Har=01} has been used in models of astrophysics (see, e.g., \cite{H}) when $\sigma=1$ and $\alpha < 0$.

Notice also that Eq. \eqref{Har=01} with $\alpha=0$ and $p=(n+2\sigma)/(n-2\sigma)$ corresponds to the fractional Yamabe problem in conformal geometry \cite{CC,CG}. To be more specific, denote $g=u^{\frac{4}{n-2\sigma}} |dx|^2$ the conformal change of the Euclidean metric $|dx|^2$ for some positive function $u$. Then one can construct the fractional order conformal Laplacian associated with $g$, which satisfies 
\begin{equation}\label{Fda-Lapcc=03}
P_\sigma^g (\cdot) = u^{-\frac{n+2\sigma}{n-2\sigma}} (-\Delta)^\sigma (u \cdot). 
\ee
The fractional $Q$-curvature of $g$ is given by 
\begin{equation}\label{QQ=g03r}
Q_\sigma^g:=P_\sigma^g (1)=u^{-\frac{n+2\sigma}{n-2\sigma}} (-\Delta)^\sigma  u. 
\end{equation} 
Thus, a solution $u$ of \eqref{Har=01} with $\alpha=0$ and $p=(n+2\sigma)/(n-2\sigma)$ induces a conformally flat metric $g=u^{\frac{4}{n-2\sigma}} |dx|^2$ which has constant fractional $Q$-curvature $Q_\sigma^g=1$.  In particular, when $\sigma=1$, curvature \eqref{QQ=g03r} is the scalar curvature modulo a positive constant, while for $\sigma=2$, it coincides with the Branson's $Q$-curvature corresponding to the Paneitz operator.

Eq. \eqref{Har=01} that involves the higher order fractional Laplacians has both nonlocal features and the difficulty of higher order elliptic equations. When $\sigma=m \in (0, n/2)$ is an integer and $\alpha=0$, Wei and Xu \cite{WX} proved the following super poly-harmonic properties for positive classical solutions of  \eqref{Har=01} in $\mathbb{R}^n$
\be\label{Super-hHowk=01}
(-\Delta)^i u\geq 0 ~~~ \textmd{in} ~\mathbb{R}^n, ~~~ i=1, 2, \dots, m-1, 
\ee
and they further applied this information to classify the entire solutions of \eqref{Har=01} in $\mathbb{R}^n$ for $1 < p \leq \frac{n+2m}{n-2m}$.  See the same results by Lin \cite{Lin98} in the case of $m=2$.  Later,  Chen, Li and Ou in \cite{CLO06} used the super poly-harmonic properties \eqref{Super-hHowk=01} to establish an integral representation formula for positive entire solutions of \eqref{Har=01} in $\mathbb{R}^n$ and gave a new proof to the classification results of Lin \cite{Lin98} and Wei-Xu \cite{WX} via the corresponding integral equation. We may also see \cite{ChengLiu,DPQ,DQ,NY} for the similar super poly-harmonic properties and integral representation of nonnegative classical solutions to \eqref{Har=01} in $\mathbb{R}^n$  when $\sigma=m \in (0, n/2)$ is an integer and $-\infty < \alpha< 2m$.  Moreover, an integral representation for nonnegative entire solutions to \eqref{Har=01} on $\mathbb{R}^n$ in the fractional setting ($0< \sigma <1$) has also been obtained in \cite{BQ,DQ,ZCCY} recently to prove Liouville-type theorems.  Thus, a natural question is whether one can establish an integral representation for nonnegative solutions of \eqref{Har=01} when $\sigma \in (1, n/2)$ is not an integer. In a recent paper \cite{CDQ},  Cao, Dai and Qin showed the following super poly-harmonic properties
\be\label{Su-hHe8i=01}
(-\Delta)^{i+\frac{\nu}{2}} u\geq 0 ~~~ \textmd{in} ~\mathbb{R}^n, ~~~ i=0, 1, \dots, m-1 
\ee
for nonnegative classical solutions of \eqref{Har=01} in $\mathbb{R}^n$ with $\alpha \leq 0$,  where $\sigma$ is rewritten as $\sigma=m+\frac{\nu}{2}$ for some integer $m \geq 1$ and some $0< \nu <2$.  Based on the super poly-harmonic properties \eqref{Su-hHe8i=01}, they further proved an integral representation formula for nonnegative classical solutions of \eqref{Har=01} in $\mathbb{R}^n$ when $1\leq \sigma< n/2$ and $\alpha \leq 0$.    In fact, they studied higher order fractional equations with general continuous nonlinearities which include $|x|^{-\alpha} u^p$ ($\alpha \leq 0$) as a special case.

In this paper, by a more direct approach without using the super poly-harmonic properties \eqref{Su-hHe8i=01},  we will establish an integral representation of nonnegative solutions to the higher order fractional equations \eqref{Har=01} for all $0< \sigma < n/2$ and $-\infty < \alpha < 2\sigma$.   
Moreover, our results also differ from those in  \cite{CDQ} in the following two respects: 
\begin{itemize}
\item [$1$.] We consider the higher order fractional equation \eqref{Har=01} with the Hardy type nonlinearity $|x|^{-\alpha}u^p$ ($0< \alpha < 2\sigma$), which is not continuous at the origin. 

 \item [$2$.] Our results apply not only to the nonnegative entire solutions of \eqref{Har=01} in $\mathbb{R}^n$, but also to its singular solutions (which have a non-removable singularity at the origin).    
 
\end{itemize}

For the sake of application, we understand a solution $u$ of \eqref{Har=01} in the sense of distributions.   For $s\in \mathbb{R}$, we define  
$$
\mathcal{L}_s(\mathbb{R}^n):= \left\{ u \in L_{loc}^1(\mathbb{R}^n):  \int_{\mathbb{R}^n} \frac{|u(x)|}{(1 + |x|)^{n+2s}} dx < \infty  \right\}.  
$$
We say that $u$ is a nonnegative distributional solution of \eqref{Har=01} in $\mathbb{R}^n\backslash \{0\}$ if $u\in \mathcal{L}_\sigma(\mathbb{R}^n)$,  $|\cdot|^{-\alpha}u^{p} \in L_{loc}^1(\mathbb{R}^n \backslash \{0\})$, $u \geq 0$, and it satisfies  \eqref{Har=01} in the following sense:
\begin{equation}\label{Har=02}
\int_{\mathbb{R}^n} u(x) (-\Delta)^\sigma \varphi(x) dx = \int_{\mathbb{R}^n} |x|^{-\alpha}u(x)^{p} \varphi(x) dx ~~~~~ \textmd{for} ~ \textmd{any} ~ \varphi \in C_c^\infty(\mathbb{R}^n \backslash \{0\}). 
\end{equation}
Similarly, $u$ is a nonnegative distributional solution of \eqref{Har=01} in $\mathbb{R}^n$  if $u\in \mathcal{L}_\sigma(\mathbb{R}^n)$,  $|\cdot|^{-\alpha}u^{p} \in L_{loc}^1(\mathbb{R}^n)$, $u \geq 0$, and it satisfies  \eqref{Har=01} in the entire $\mathbb{R}^n$ in the following sense: 
\begin{equation}\label{Har=02h7p}
\int_{\mathbb{R}^n} u(x) (-\Delta)^\sigma \varphi(x) dx = \int_{\mathbb{R}^n} |x|^{-\alpha}u(x)^{p} \varphi(x) dx ~~~~~ \textmd{for} ~ \textmd{any} ~ \varphi \in C_c^\infty(\mathbb{R}^n).  
\end{equation}
Then we have the following integral characterization for nonnegative solutions to the equation \eqref{Har=01} when $0< \sigma  < n/2$ and $-\infty <  \alpha < 2\sigma$. 

\begin{theorem}\label{THM=02}
Let $0< \sigma  < n/2$ and $-\infty <  \alpha < 2\sigma$.  Suppose that one of the following holds:
\begin{itemize}
\item [$(1)$] $u \in \mathcal{L}_\sigma(\mathbb{R}^n)\cap C(\mathbb{R}^n)$ is a nonnegative distributional solution of \eqref{Har=01} in $\mathbb{R}^n$ with  $p >1$.  

\item [$(2)$] $u \in \mathcal{L}_\sigma(\mathbb{R}^n)\cap C(\mathbb{R}^n \backslash \{0\})$ is a nonnegative distributional solution of \eqref{Har=01} in $\mathbb{R}^n \backslash \{0\}$ with  $p \geq  \frac{n-\alpha}{n-2\sigma}$. 
\end{itemize}
Then $u$ satisfies the integral equation 
\begin{equation}\label{Int=21001}
u(x)= C_{n,\sigma} \int_{\mathbb{R}^n} \frac{u(y)^{p}}{|y|^{\alpha} |x-y|^{n-2\sigma}} dy~~~~~ \textmd{for} ~ x \in \mathbb{R}^n \backslash \{0\} 
\end{equation} 
with some positive constant $C_{n,\sigma}$. 
\end{theorem}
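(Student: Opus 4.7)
The plan is to define
\begin{equation*}
v(x):=C_{n,\sigma}\int_{\R^n}\frac{u(y)^p}{|y|^{\alpha}|x-y|^{n-2\sigma}}\,dy
\end{equation*}
and to prove $u\equiv v$ on $\R^n\setminus\{0\}$ by two matching one-sided bounds: a lower bound $u\ge v$ obtained from a Green's-function representation on bounded regions, followed by a Liouville-type argument that kills the remainder.

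For the inequality $u\ge v$, I would work on bounded domains $\Omega\subset\R^n$: balls $B_R$ in case $(1)$ and annuli $A_{r,R}=B_R\setminus\overline{B_{r}}$ in case $(2)$. Writing $G_\Omega$ and $P_\Omega$ for the Green's function and Poisson kernel of $(-\Delta)^\sigma$ on $\Omega$ with zero exterior data, a mollify-and-truncate procedure produces $C_c^\infty(\R^n\setminus\{0\})$ test functions converging to $G_\Omega(x_0,\cdot)$, and plugging them into the distributional identity \eqref{Har=02} (resp.\ \eqref{Har=02h7p}) yields the nonlocal representation
\begin{equation*}
u(x_0)=\int_\Omega G_\Omega(x_0,y)|y|^{-\alpha}u(y)^p\,dy+\int_{\Omega^c}P_\Omega(x_0,y)u(y)\,dy.
\end{equation*}
Both terms are non-negative; dropping the Poisson term, using the comparison $G_\Omega(x,y)\le C_{n,\sigma}|x-y|^{-(n-2\sigma)}$ (the full-space Riesz kernel being a supersolution of the equation defining $G_\Omega$) and the monotone convergence $G_\Omega\uparrow C_{n,\sigma}|x-y|^{-(n-2\sigma)}$ as $r\to 0$ and $R\to\infty$, monotone convergence in the source integral produces $u(x_0)\ge v(x_0)$ for every $x_0\ne 0$. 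In particular $v<\infty$ off the origin.

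To close the bound, set $w:=u-v\ge 0$. Standard Riesz-potential calculus gives $(-\Delta)^\sigma v=|y|^{-\alpha}u^{p}$ distributionally, so $w$ is $\sigma$-harmonic in the distributional sense on $\R^n$ (case $(1)$) or on $\R^n\setminus\{0\}$ (case $(2)$). In case $(2)$, the integrability $w\in\mathcal{L}_\sigma$ combined with the hypothesis $p\ge(n-\alpha)/(n-2\sigma)$ makes the isolated singularity at $0$ removable, so $w$ is $\sigma$-harmonic on all of $\R^n$ and lies in $\mathcal{L}_\sigma$. A fractional Liouville theorem then identifies $w$ as a polynomial of degree strictly less than $2\sigma$. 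If $w\not\equiv 0$, non-negativity of $w$ forces $u\ge w$ to be bounded below at infinity by a non-trivial non-negative polynomial, in particular by some $c>0$ on $\{|y|\ge 1\}$; but $\alpha<2\sigma$ makes $\int_{|y|\ge 1}|y|^{-(\alpha+n-2\sigma)}\,dy=+\infty$, which forces $v\equiv\infty$ and contradicts $v\le u<\infty$. Hence $w\equiv 0$ and \eqref{Int=21001} follows.

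The main obstacle I expect is the first step: rigorously setting up the Green's-function representation for the higher-order fractional Laplacian. Admissible $C_c^\infty(\R^n\setminus\{0\})$ approximations to $G_\Omega(x_0,\cdot)$ must be constructed with enough control on their $(-\Delta)^\sigma$ to justify passage to the limit, and the comparison $G_\Omega\le C_{n,\sigma}|x-y|^{-(n-2\sigma)}$ for non-integer $\sigma>1$ rests on positivity-preserving features that are subtle for polyharmonic-type operators and typically demand either the extension/spectral formulation or a careful iterative analysis of the definition \eqref{Fda-Lapcc=02} on $B_R$ and $A_{r,R}$. Controlling the Poisson-type boundary term in case $(2)$ as $r\to 0$, where the exterior includes a small ball around the possible singularity, will require the integrability hypothesis on $p$ in an essential way.
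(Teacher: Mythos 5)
Your proposal takes a genuinely different route from the paper. You aim to establish the pointwise inequality $u \geq v$ first, via a Green's-function/Poisson-kernel representation on bounded domains followed by monotone convergence, and then exploit the sign of $w := u - v$ together with a Liouville theorem in $\mathcal{L}_\sigma$ to force $w\equiv 0$. The paper never proves a one-sided bound. Instead it shows, by a bootstrap with explicit decaying test functions $\psi(x)\approx |x|^{-\tau}$ (Lemma \ref{Aop-decay=01}, Propositions \ref{Plo-P02} and \ref{VVV}), that $|\cdot|^{-\alpha}u^p$ decays fast enough at infinity to place the Riesz potential $v$ in $\mathcal{L}_0(\mathbb{R}^n)$; the same bootstrap upgrades $u\in\mathcal{L}_\sigma$ to $u\in\mathcal{L}_0(\mathbb{R}^n)$, so $w\in\mathcal{L}_0(\mathbb{R}^n)$ is $\sigma$-harmonic on all of $\mathbb{R}^n$ (after Proposition \ref{Glo-P01}), and the Liouville theorem kills $w$ with no sign information needed. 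What the paper's route buys is that the argument lives entirely on $\mathbb{R}^n$ with simple polynomially decaying test functions and never touches domain Green's functions.

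The step you flag as ``the main obstacle'' is, I believe, actually fatal for your route in case $(2)$. Positivity of the Green's function and Poisson kernel for $(-\Delta)^\sigma$ with $\sigma>1$ does hold on balls (Boggio-type formulas), but it fails on general domains: already for the biharmonic operator the Green's function on annuli is known to change sign. So the representation on $A_{r,R}$ does not split into two nonnegative pieces and you cannot drop the Poisson term to conclude $u\geq v$. This is exactly where the paper's Proposition \ref{Glo-P01} earns its keep: a delicate cut-off estimate near the origin, using the hypothesis $p\geq (n-\alpha)/(n-2\sigma)$ in an essential way, shows $|\cdot|^{-\alpha}u^p\in L^1_{\loc}(\mathbb{R}^n)$ and extends the distributional identity to all $\varphi\in C_c^\infty(\mathbb{R}^n)$, so that no annuli ever enter the argument. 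Your remark that ``the isolated singularity at $0$ is removable, so $w$ is $\sigma$-harmonic on all of $\mathbb{R}^n$'' is precisely the content of that proposition and is by no means automatic; you treat it as routine.

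There is also a smaller but genuine gap in your closing contradiction. A nontrivial nonnegative polynomial of degree $\geq 1$, such as $y_1^2$, is not bounded below by any positive constant on $\{|y|\geq 1\}$, so ``$u\geq c>0$'' does not follow from $w$ being a nonzero nonnegative polynomial. The conclusion $v\equiv\infty$ can still be recovered: the leading homogeneous part of $w$ is nonnegative and nontrivial, which gives $\int_{R\leq|y|\leq 2R} w^p\,dy \geq c\, R^{n+p\deg w}$ for large $R$, and the exponent $2\sigma - \alpha + p\deg w$ in the resulting dyadic sum is positive because $\alpha<2\sigma$. But the estimate has to be run at the level of spherical averages, not pointwise lower bounds.
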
 

\begin{remark}\label{Re=098m01}
We give several remarks on Theorem \ref{THM=02}.  
\begin{itemize}
\item [$(1)$] When $\sigma \in (0, n/2)$ is an integer,  instead of the assumption $u\in \mathcal{L}_\sigma(\mathbb{R}^n)$ in Theorem \ref{THM=02},  we only need $u \in L_{loc}^1(\mathbb{R}^n)$ since we have a better bound in this setting by the rescaling method  (see Proposition \ref{mInt=0hg01} in Section \ref{S2}).  

\item [$(2)$] If the assumption $(1)$ of Theorem \ref{THM=02} is satisfied, then $u$ satisfies the integral equation  \eqref{Int=21001} for all $x \in \mathbb{R}^n$. 

\item [$(3)$] It is easy to see that if $u \in C(\mathbb{R}^n)$ is a nonnegative solution of the integral equation \eqref{Int=21001}, then $u$ also satisfies the differential equation \eqref{Har=01} in $\mathbb{R}^n$.   
\end{itemize}
\end{remark}

Note that  Theorem \ref{THM=02} holds for both regular solutions and singular solutions near the origin, and only requires the growth restriction $u \in \mathcal{L}_\sigma(\mathbb{R}^n)$ near infinity which is slightly weaker than the growth restriction $u \in \mathcal{L}_{\frac{\nu}{2}}(\mathbb{R}^n)$ for some $0< \nu < 2$ satisfying $\sigma=m + \frac{\nu}{2}$  used in  \cite{CDQ} when $\sigma >1$ is not an integer.   Moreover, as pointed out in Remark \ref{Re=098m01} $(1)$ above,  such a growth restriction is not required in the case where $\sigma$ is an integer.

For $\alpha=0$ and $p=\frac{n+2\sigma}{n-2\sigma}$,  as a direct result of Theorem \ref{THM=02} and the classification results for positive $L_{loc}^{2n/(n-2\sigma)}(\mathbb{R}^n)$  solutions  of the integral equation \eqref{Int=21001} in Chen-Li-Ou \cite{CLO06} or  Li \cite{Li04},  we give a  classification for nonnegative solutions to the following conformally invariant equation with higher order fractional Laplacians
\begin{equation}\label{CONIn=001}
(-\Delta)^\sigma = u^{\frac{n+2\sigma}{n-2\sigma}} ~~~~~~  \textmd{in} ~ \mathbb{R}^n. 
\end{equation} 
\begin{theorem}\label{THM=02hy01}
Let $0< \sigma  < n/2$.  Suppose that  $u \in \mathcal{L}_\sigma(\mathbb{R}^n)\cap C(\mathbb{R}^n)$ is a nonnegative distributional solution of \eqref{CONIn=001} in $\mathbb{R}^n$.  Then either $u \equiv 0$ or $u$ must be of the form  
\begin{equation}\label{CONIn=002}
u(x)= \gamma_{n,\sigma} \left( \frac{\mu}{\mu^2 + |x -x_0|^2} \right)^{\frac{n-2\sigma}{2}}
\end{equation} 
for a fixed constant $\gamma_{n,\sigma}>0$ and for some $\mu >0$ and $x_0 \in \mathbb{R}^n$.   
\end{theorem}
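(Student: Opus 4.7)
The strategy is a two-step reduction: use Theorem \ref{THM=02} to pass from the differential equation \eqref{CONIn=001} to the integral equation \eqref{Int=21001}, and then invoke the known classification of positive solutions of that integral equation.

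First, the hypotheses of Theorem \ref{THM=02} case $(1)$ are precisely those of Theorem \ref{THM=02hy01}, with $\alpha = 0$ and $p = (n+2\sigma)/(n-2\sigma) > 1$. Applying that theorem gives
$$
u(x) = C_{n,\sigma}\int_{\mathbb{R}^n}\frac{u(y)^{(n+2\sigma)/(n-2\sigma)}}{|x-y|^{n-2\sigma}}\,dy
$$
for all $x \in \mathbb{R}^n \setminus \{0\}$, and by Remark \ref{Re=098m01} $(2)$ in fact for all $x \in \mathbb{R}^n$.

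Second, if $u \not\equiv 0$ the representation formula forces $u > 0$ everywhere (the integrand is nonnegative and not identically zero, and the Riesz kernel is strictly positive). The assumed continuity of $u$ on $\mathbb{R}^n$ yields local boundedness, hence in particular $u \in L^{2n/(n-2\sigma)}_{\mathrm{loc}}(\mathbb{R}^n)$. We are then in the setting of the classification of positive $L^{2n/(n-2\sigma)}_{\mathrm{loc}}$ solutions of the Riesz potential integral equation above, established by Chen-Li-Ou \cite{CLO06} (see also Li \cite{Li04}) via the method of moving planes (respectively moving spheres) in integral form. That classification yields the bubble form \eqref{CONIn=002}, with the positive constant $\gamma_{n,\sigma}$ determined by substituting the ansatz back into the integral equation and solving for the normalization, so that $\gamma_{n,\sigma}$ is fixed by $n$, $\sigma$ and $C_{n,\sigma}$ alone.

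The main work, namely the passage from the fractional elliptic PDE to the integral equation without relying on the super poly-harmonic properties \eqref{Su-hHe8i=01}, is already carried out in Theorem \ref{THM=02}. After that reduction the classification step is by citation, and the only verifications required, positivity of $u$ and the local $L^{2n/(n-2\sigma)}$ bound, are immediate consequences of the representation formula and the assumed continuity; no further obstacle remains.
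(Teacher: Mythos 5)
Your proposal is correct and matches the paper's own (unwritten) proof exactly: the paper states Theorem \ref{THM=02hy01} ``as a direct result of Theorem \ref{THM=02} and the classification results for positive $L^{2n/(n-2\sigma)}_{loc}$ solutions of the integral equation in Chen-Li-Ou or Li,'' and your proposal carries out precisely that two-step reduction, including the necessary verifications of positivity and local $L^{2n/(n-2\sigma)}$ membership.
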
 

It is well-known that such classification results have many important applications in conformal geometry (see, e.g., \cite{JLX,JLX17} and the references therein). When $\sigma=m \in (0, n/2)$ is an integer, Theorem \ref{THM=02hy01} has been proved by Caffarelli-Gidas-Spruck \cite{CGS} for $m =1$, Lin \cite{Lin98} for $m=2$ and Wei-Xu \cite{WX} for $m \geq 3$.  When $\sigma \in (0, 1)$, Theorem \ref{THM=02hy01} was  proved by Chen-Li-Li \cite{CLL17} and Jin-Li-Xiong \cite{JLX}.  For general $\sigma \in (0, n/2)$, Chen, Li and Ou in \cite{CLO06} showed Theorem \ref{THM=02hy01} for nonnegative weak solutions $u \in H^\sigma (\mathbb{R}^n)$ of \eqref{CONIn=001} by using an integral representation.    For non-integral $\sigma \in (1, n/2)$, Cao, Dai and Qin in \cite{CDQ} recently proved Theorem \ref{THM=02hy01} for nonnegative classical solutions $u \in C_{loc}^{2\sigma + \varepsilon}(\mathbb{R}^n) \cap \mathcal{L}_{\frac{\nu}{2}}(\mathbb{R}^n)$ of \eqref{CONIn=001} with $\sigma=m + \frac{\nu}{2}$ for some $0< \nu < 2$, where $\varepsilon>0$ is arbitrarily small.  Clearly, such weak solutions and classical solutions of \eqref{CONIn=001} both satisfy the definition of distributional solutions in this paper. Thus,  our  Theorem \ref{THM=02hy01} in a slightly weaker assumption gives the classification of nonnegative solutions to \eqref{CONIn=001} in higher order fractional cases.

As an application of Theorem \ref{THM=02}, we prove a Liouville-type theorem for nonnegative solutions of \eqref{Har=01} in $\mathbb{R}^n$ for all $\sigma  \in (0, n/2)$ and $\alpha \in (-\infty , 2\sigma)$ in the following subcritical range 
\be\label{Sub=range=001}
1 < p < p_{\sigma,\alpha}^*:=\frac{n+2\sigma -2\alpha}{n-2\sigma}, 
\ee
where $p_{\sigma,\alpha}^*$ is the critical exponent.  
\begin{theorem}\label{THM=02App02}
Let $0< \sigma  < n/2$ and $-\infty <  \alpha < 2\sigma$.  Suppose that $u \in \mathcal{L}_\sigma(\mathbb{R}^n)\cap C(\mathbb{R}^n)$ is a nonnegative distributional solution of \eqref{Har=01} in $\mathbb{R}^n$ with  $1 < p < p_{\sigma,\alpha}^*$.  Then $u \equiv 0$ in $\mathbb{R}^n$. 
\end{theorem}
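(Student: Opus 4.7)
My plan is to apply Theorem~\ref{THM=02} (case (1)) to reduce \eqref{Har=01} to its equivalent integral formulation \eqref{Int=21001},
\[
u(x) = C_{n,\sigma} \int_{\mathbb{R}^n} \frac{u(y)^p}{|y|^\alpha \, |x-y|^{n-2\sigma}}\, dy, \qquad x \in \mathbb{R}^n,
\]
and then argue by contradiction. Suppose $u \not\equiv 0$; continuity and nonnegativity of $u$, combined with positivity of the kernel in \eqref{Int=21001}, give $u>0$ on all of $\mathbb{R}^n$. Restricting the integral in \eqref{Int=21001} to a fixed ball on which $u \geq c > 0$ and using $|x-y| \leq 2|x|$ for $|x|$ large yields the baseline lower bound $u(x) \geq c_0 |x|^{-(n-2\sigma)}$ for all $|x| \geq R_0$.

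The main engine is a bootstrap iteration. Suppose inductively $u(y) \geq c_k |y|^{-q_k}$ for $|y| \geq R_k$ with $q_0 := n-2\sigma$. Inserting this into \eqref{Int=21001} and restricting the integration to the annulus $\{|x|/4 \leq |y| \leq |x|/2\}$, where $|y| \sim |x-y| \sim |x|$, I would obtain
\[
u(x) \geq c_{k+1} |x|^{-q_{k+1}}, \qquad q_{k+1} := p q_k + \alpha - 2\sigma,
\]
provided $pq_k + \alpha < n$ so that the radial integral $\int_{R_k}^{|x|/2} r^{n-1-\alpha-pq_k}\,dr$ really grows at the upper endpoint. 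This is an affine recursion with slope $p>1$ and fixed point $q^* = (2\sigma-\alpha)/(p-1)$; an elementary computation shows that $q_0 < q^*$ (equivalently $pq_0+\alpha<n$) holds precisely when $p < (n-\alpha)/(n-2\sigma)$. In this sub-range the iterates $q_k$ are driven monotonically to $-\infty$, eventually falling below the threshold $(2\sigma-\alpha)/p$ that governs integrability at infinity of the integrand on the right of \eqref{Int=21001}; this divergence contradicts the identity itself.

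For the complementary sub-range $(n-\alpha)/(n-2\sigma) \leq p < p^*_{\sigma,\alpha}$ the iteration cannot even start to improve $q_0$. Here I plan to combine the baseline two-sided decay $u(x) \asymp |x|^{-(n-2\sigma)}$ (the matching upper bound follows from \eqref{Int=21001} because the lower bound already guarantees $\int u^p |y|^{-\alpha}\,dy < \infty$ in this range) with a Pohozaev-type identity for \eqref{Har=01}. Testing the equation against $x \cdot \nabla u + \tfrac{n-2\sigma}{2} u$ and using the scaling identity $\int x\cdot\nabla u \cdot (-\Delta)^\sigma u\, dx = \tfrac{2\sigma-n}{2}\int u\,(-\Delta)^\sigma u\,dx$ together with an integration by parts against the $|x|^{-\alpha}$-weighted nonlinearity formally produces
\[
\Bigl[\,\frac{n-\alpha}{p+1} - \frac{n-2\sigma}{2}\,\Bigr] \int_{\mathbb{R}^n} \frac{u^{p+1}}{|x|^\alpha}\, dx = 0,
\]
whose bracket is strictly positive for $p < p^*_{\sigma,\alpha}$. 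This forces $\int u^{p+1}/|x|^\alpha\,dx = 0$, hence $u \equiv 0$.

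The main obstacle I expect is the second sub-range: rigorously justifying the Pohozaev identity in the higher-order fractional setting requires showing that all boundary and remainder contributions at infinity vanish under the decay $u \asymp |x|^{-(n-2\sigma)}$, and the nonlocality of $(-\Delta)^\sigma$ for non-integer $\sigma \in (1,n/2)$ makes this bookkeeping delicate. An alternative route is to bypass Pohozaev entirely and instead apply the method of moving planes (or moving spheres through the origin, to accommodate the $|x|^{-\alpha}$ factor) directly to \eqref{Int=21001}, which is a standard pathway to subcritical Liouville theorems for integral equations of Riesz type.
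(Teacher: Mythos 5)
Your overall framework is sound: reduce \eqref{Har=01} to the integral equation via Theorem~\ref{THM=02}, then drive a lower-bound bootstrap to a contradiction. This is exactly the skeleton of the paper's proof of Theorem~\ref{THM=02App02} (via Theorem~\ref{IntLiou=02App01}). However, there is a genuine gap in how you seed the bootstrap, and as a result your argument only covers the sub-range $1<p<(n-\alpha)/(n-2\sigma)$, not the full range $1<p<p^*_{\sigma,\alpha}$.

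The issue is the starting exponent. From the kernel alone one gets $u(x)\gtrsim |x|^{-(n-2\sigma)}$, i.e.\ $q_0=n-2\sigma$. The affine recursion $q_{k+1}=pq_k+\alpha-2\sigma$ has fixed point $q^*=(2\sigma-\alpha)/(p-1)$, and (as you correctly observe) the iterates decrease only if $q_0<q^*$, which is precisely $p<(n-\alpha)/(n-2\sigma)$. The paper does not start from this baseline bound: before iterating, it runs a moving-spheres argument through the origin (Lemmas~\ref{S3oLeM=001} and \ref{S3InLem02}) to show $\bar{\lambda}=\infty$, which upon choosing $\lambda=\sqrt{|x|}$ yields the much stronger seed $u(x)\geq C_0\,|x|^{-(n-2\sigma)/2}$, i.e.\ $\mu_0=(n-2\sigma)/2$. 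With this start, $\mu_0<\mu^*$ is equivalent to $p<(n+2\sigma-2\alpha)/(n-2\sigma)=p^*_{\sigma,\alpha}$, so the bootstrap covers the whole subcritical range in one stroke and no case split is needed. The moving-spheres step is not a fallback route for ``hard'' $p$ — it is the key mechanism that makes the iteration apply uniformly; you flag it as an ``alternative'' at the end but do not carry it out.

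Your proposed Pohozaev substitute for the range $(n-\alpha)/(n-2\sigma)\leq p<p^*_{\sigma,\alpha}$ also has a flaw as stated. You assert that the matching upper bound $u(x)\lesssim |x|^{-(n-2\sigma)}$ follows from the integral representation because the lower bound ``already guarantees $\int u^p|y|^{-\alpha}\,dy<\infty$.'' This is not correct: a pointwise lower bound on $u$ cannot by itself bound an integral of $u^p$ from above, and indeed at the endpoint $p=(n-\alpha)/(n-2\sigma)$ the profile $u\sim |x|^{-(n-2\sigma)}$ gives $u^p|y|^{-\alpha}\sim |y|^{-n}$, which is not integrable at infinity; what Proposition~\ref{Plo-P02} actually furnishes is only $\int |y|^{-\alpha}u^p(1+|y|)^{-\gamma}\,dy<\infty$ for $\gamma>0$. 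So the premise $u\asymp |x|^{-(n-2\sigma)}$ is unproven and the Pohozaev computation (itself delicate for non-integer $\sigma$, as you note) does not get off the ground.

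In short: the correct fix is to insert the moving-spheres argument before the bootstrap, for all $1<p<p^*_{\sigma,\alpha}$, to obtain $u(x)\geq C_0|x|^{-(n-2\sigma)/2}$; then your iteration $\mu_{k+1}=p\mu_k+\alpha-2\sigma$ drives $\mu_k$ to $-\infty$ (or to the finite fixed point when $p<1$, but here $p>1$), making $u(0)=\int u^p|y|^{-\alpha}|y|^{-(n-2\sigma)}\,dy=+\infty$ — the desired contradiction.
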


Liouville-type theorems for elliptic equations have been extensively studied,  which, in particular,  are crucial in deriving a priori estimates and establishing existence of positive solutions to non-variational boundary value problems. When $\alpha=0$, Liouville-type result in Theorem \ref{THM=02App02} has been proved in \cite{CGS,CLL17,GS,JLX,Li04,Lin98,WX,ZCCY} for different values of $\sigma \in (0, n/2)$.   For the Hardy-H\'enon's case $\alpha \ne 0$,  Theorem \ref{THM=02App02} was showed in \cite{DQ,GW,RZ,WLH} for $\sigma=1$, in \cite{DQ,BQ} for $0< \sigma <1$  and in \cite{DQ,NY} when $\sigma = m \in (1, n/2)$ is an integer. One may also see \cite{ChengLiu,DPQ,DZ} and the references therein for some earlier partial results. For the case where $\sigma \in (1, n/2)$ is not an integer, Theorem \ref{THM=02App02} with $\alpha \leq 0$ was recently proved by Cao, Dai and Qin in \cite{CDQ} for nonnegative classical solutions $u \in C_{loc}^{2\sigma + \varepsilon}(\mathbb{R}^n) \cap \mathcal{L}_{\frac{\nu}{2}}(\mathbb{R}^n)$ with $\sigma=m + \frac{\nu}{2}$ for some $0< \nu < 2$.  Hence,  Theorem \ref{THM=02App02} not only covers the existing gap when $0 < \alpha < 2\sigma$  and $\sigma \in (1, n/2)$ is not an integer,  but also gives a unified approach for both $\alpha<0$ and $\alpha \geq 0$ in higher order fractional cases.  

The Liouville-type result in Theorem \ref{THM=02App02} is optimal in the sense that for any $p > p_{\sigma,\alpha}^*$, the equation \eqref{Har=01} always admits positive solutions in $\mathbb{R}^n$. More specifically, we have the following existence result in the supercritical case.   

\begin{theorem}\label{THMExi01}
Let $0< \sigma  < n/2$ and $-\infty <  \alpha < 2\sigma$. For any $p > p_{\sigma,\alpha}^*$, the equation \eqref{Har=01} has a bounded positive solution $u \in \mathcal{L}_\sigma(\mathbb{R}^n)\cap C(\mathbb{R}^n)$.   
\end{theorem} 

When $\sigma \in (0, n/2)$ is an integer, the existence of positive solutions to \eqref{Har=01} in $\mathbb{R}^n$ for the critical and supercritical cases has been established via various methods; see, e.g., \cite{GG,GS,LV,Lions,LGZ,N1,NY} and the references therein. For the general $ \sigma \in(0, n/2)$ and $0 < \alpha < 2\sigma$, the existence in the critical case $p=p_{\sigma,\alpha}^*$ can be obtained by using the higher order fractional Hardy-Sobolev inequality (see, e.g., Yang \cite{YJ}). For $\sigma \in (0, 1)$ and $\alpha < 0$, the existence in the critical case has recently been showed by Barrios-Quaas \cite{BQ}. In combination with the integral equation \eqref{Int=21001}, their method should also work in the higher order case $\sigma \in (1, n/2)$. Hence, we are mainly concerned with the supercritical case. Theorem \ref{THMExi01} is unified for all $\sigma \in (0, n/2)$ and $\alpha \in (-\infty,  2\sigma)$ to establish the existence of \eqref{Har=01} in the supercritical case $p > p_{\sigma,\alpha}^*$.

As a consequence of Theorem \ref{THMExi01}, we have the existence of fast-decay singular solutions to the higher order fractional Lane-Emden equation
\be\label{24=La-Em} 
(-\Delta)^\sigma u = u^p ~~~~~~  \textmd{in} ~ \mathbb{R}^n \backslash \{0\}. 
\ee
These singular solutions could be used as building blocks to construct solutions of the singular Yamabe-type problems (see \cite{AW1,MP,HS} for $0 < \sigma \leq 1$ and $\sigma=2$).    

\begin{corollary}\label{24=La-Em-002}
Let $0< \sigma  < n/2$ and $\frac{n}{n-2\sigma} < p < \frac{n + 2\sigma}{n - 2\sigma}$. Then, for any $\varepsilon>0$ there exists a fast-decay positive singular solution $u_\varepsilon$ of \eqref{24=La-Em} such that
$$
u_\varepsilon(x) \sim 
\begin{cases}
A_{n,p,\sigma} |x|^{-\frac{2\sigma}{p-1}} ~~~~~ & \textmd{as} ~ x \to 0, \\
\varepsilon |x|^{-(n-2\sigma)} ~~~~~ & \textmd{as} ~ x \to \infty,
\end{cases}
$$
where $A_{n,p,\sigma}$ is a positive constant. 
\end{corollary}  

Applying Theorem \ref{THM=02}, we would also study radial symmetry of nonnegative solutions to \eqref{Har=01} in $\mathbb{R}^n$ with critical exponent or in $\mathbb{R}^n \backslash \{0\}$.  Symmetry is a fundamental property in elliptic PDEs which is especially important in searching solutions, classifying solutions, characterizing asymptotic behavior of solutions, and so on. In particular,  radial symmetry of solutions to an elliptic equation on $\mathbb{R}^n$ plays a crucial role in the classification of entire solutions; see, for example, \cite{CGS,CY,CL,GNN,Lin98,WX} and the references therein.  On the other hand,  the importance of studying singular solutions of Yamabe-type equations has been highlighted in the classical works of Schoen and Yau \cite{Sch88,SY88} on conformally flat manifolds. In general, the radial symmetry of singular solutions of a Yamabe-type equation on $\mathbb{R}^n \backslash \{0\}$ is an important step to understand asymptotic behavior for the corresponding equation near its isolated singularity \cite{CGS,HLT,KMPS}.  The recent works of Yang and Zou  \cite{YZa1,YZa2,YZa3} also showed that radial symmetry of global singular solutions has an important application in characterizing precise asymptotic behavior of solutions to fractional equations near isolated singularities, which, in particular, helps to overcome the difficulties caused by the lack of ODEs analysis for fractional equations.

We say that the origin $\{0\}$ is a non-removable singularity of the solution $u$ of \eqref{Har=01}  if $u$ cannot be extended to a continuous function near the origin.  Based on Theorem \ref{THM=02}, we show the following radial symmetry of nonnegative solutions to \eqref{Har=01} with Hardy-type weights ($0\leq \alpha <2\sigma$).  
\begin{theorem}\label{THM=01}
Let $0< \sigma  < n/2$ and $0\leq \alpha < 2\sigma$.  Suppose that  $u \in \mathcal{L}_\sigma(\mathbb{R}^n)\cap C(\mathbb{R}^n \backslash \{0\})$ is  a nonnegative distributional solution of \eqref{Har=01} in $\mathbb{R}^n \backslash \{0\}$, and $u$ has a  non-removable singularity at the origin when $\alpha=0$ and $p=\frac{n+2\sigma}{n-2\sigma}$.   
\begin{itemize}
\item [$(1)$] If $\frac{n-\alpha}{n-2\sigma} \leq    p \leq \frac{n+2\sigma-2\alpha}{n-2\sigma}$,  then $u$ is radially symmetric and monotonically decreasing with respect to the origin.  

\item [$(2)$] If $\frac{n+2\sigma-2\alpha}{n-2\sigma} < p \leq \frac{n+2\sigma-\alpha}{n-2\sigma}$, then $u$ is radially symmetric with respect to the origin.  
\end{itemize} 
\end{theorem}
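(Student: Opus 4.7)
The plan is to apply Theorem~\ref{THM=02}(2), which is available since $p \geq \tfrac{n-\alpha}{n-2\sigma}$ throughout, to deduce the integral identity
\begin{equation*}
u(x) = C_{n,\sigma} \int_{\mathbb{R}^n} \frac{u(y)^{p}}{|y|^{\alpha}\, |x-y|^{n-2\sigma}}\, dy, \qquad x \in \mathbb{R}^n \setminus \{0\},
\end{equation*}
and then to run the method of moving planes directly on this integral equation. Fixing an arbitrary direction, which by rotational symmetry I may take to be $e_1$, it suffices to prove that $u$ is symmetric about the hyperplane $T_0 = \{x_1 = 0\}$; the full radial symmetry about the origin then follows by varying the direction.

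\textbf{Part (1): direct moving planes on the integral equation.} Set $\Sigma_\lambda = \{x_1 < \lambda\}$, $x^\lambda = (2\lambda - x_1, x_2, \ldots, x_n)$ and $u_\lambda(x) = u(x^\lambda)$. A splitting of the integral over $\Sigma_\lambda$ and $\Sigma_\lambda^c$, followed by the change of variables $y \mapsto y^\lambda$ on the latter, yields for $x \in \Sigma_\lambda$
\begin{equation*}
u_\lambda(x) - u(x) = C_{n,\sigma} \int_{\Sigma_\lambda} \bigl[ |x-y|^{2\sigma-n} - |x^\lambda - y|^{2\sigma-n} \bigr] \left[ \frac{u_\lambda(y)^p}{|y^\lambda|^{\alpha}} - \frac{u(y)^p}{|y|^{\alpha}} \right] dy.
\end{equation*}
For $x,y\in\Sigma_\lambda$ the first bracket is strictly positive; for $\lambda<0$ one has $|y^\lambda| < |y|$, so the radial monotonicity of $|y|^{-\alpha}$ (using $\alpha \geq 0$) makes the weight comparison in the second bracket favourable. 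On the bad set $W_\lambda=\{x\in\Sigma_\lambda:u(x)>u_\lambda(x)\}$ the mean value inequality together with this favourable comparison give
\begin{equation*}
u(x)-u_\lambda(x) \le C p \int_{W_\lambda} \frac{u(y)^{p-1}\bigl(u(y)-u_\lambda(y)\bigr)}{|y|^{\alpha}\, |x-y|^{n-2\sigma}}\, dy, \qquad x \in W_\lambda.
\end{equation*}
Applying the Stein--Weiss weighted Hardy--Littlewood--Sobolev inequality, followed by H\"older with exponent $n/(2\sigma)$ for the factor $|y|^{-\alpha} u^{p-1}$, produces the self-improving estimate
\begin{equation*}
\|u-u_\lambda\|_{L^q(W_\lambda)} \le C \bigl\| |y|^{-\alpha} u^{p-1} \bigr\|_{L^{n/(2\sigma)}(W_\lambda)} \|u-u_\lambda\|_{L^q(W_\lambda)}, \qquad q = \tfrac{2n}{n-2\sigma};
\end{equation*}
the hypothesis $p \leq p^*_{\sigma,\alpha}$ is exactly what makes the Stein--Weiss exponents admissible. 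For $\lambda$ sufficiently negative the front coefficient is $<1$, forcing $|W_\lambda|=0$ and initiating the moving plane procedure. A standard continuity argument then slides $\lambda_0 := \sup\{\lambda\leq 0 : u\leq u_\mu ~\text{on}~ \Sigma_\mu ~\text{for all}~ \mu\leq\lambda\}$ all the way to $0$; reversing the direction yields the opposite inequality on $\Sigma_0^c$ and hence symmetry about $T_0$. Strict inequality on each $\Sigma_\lambda$, $\lambda<0$, further gives monotone decrease of $u$ in $|x|$.

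\textbf{Part (2): Kelvin transform, and the main obstacle.} For $p > p^*_{\sigma,\alpha}$ the admissibility of the Stein--Weiss exponents in Part (1) breaks down, so I circumvent this via the Kelvin transform $v(x) = |x|^{-(n-2\sigma)} u(x/|x|^2)$. Using the identity $|\tilde{x}-\tilde{y}| = |x-y|/(|x|\,|y|)$ (with $\tilde z = z/|z|^2$) in the integral equation and changing variables shows that $v$ satisfies the same type of integral equation with weight exponent
\begin{equation*}
\tau = (n+2\sigma-\alpha) - (n-2\sigma)p,
\end{equation*}
and the hypothesis $p \leq \tfrac{n+2\sigma-\alpha}{n-2\sigma}$ gives $\tau \in [0, \alpha)$. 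A direct computation shows that the critical exponent of $v$'s equation equals $\tfrac{n+2\sigma-2\tau}{n-2\sigma} = 2p - p^*_{\sigma,\alpha}\ge p$, placing $v$'s problem in the subcritical-or-critical regime of Part (1). Applying the argument of Part (1) to $v$ then gives radial symmetry of $v$ about the origin; since the Kelvin transform commutes with rotations about the origin, $u$ is radially symmetric as well. Monotonicity is not recovered because Kelvin inversion swaps the interior and exterior of the unit sphere. The most delicate technical point throughout is the careful treatment of the origin and its reflection $0^\lambda$ (where either $|y|^{-\alpha}$ or $u$ itself may be singular) and the verification that the Kelvin transform $v$ inherits the integrability needed to justify its own integral representation. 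In particular, in the critical conformal case $\alpha=0,\,p=\tfrac{n+2\sigma}{n-2\sigma}$, the extra hypothesis of non-removable singularity is precisely what excludes the smooth classification profile of Theorem~\ref{THM=02hy01} and leaves only the genuinely singular behaviour at the origin that is handled by the inversion.
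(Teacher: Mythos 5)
Your overall architecture matches the paper's: invoke Theorem \ref{THM=02}(2) (available since $p\ge \frac{n-\alpha}{n-2\sigma}$ throughout) to pass to the integral equation \eqref{Int=21xb0r1}, prove symmetry there, and treat the range $p>p^*_{\sigma,\alpha}$ by the Kelvin transform at the origin with the new weight exponent $\theta=n+2\sigma-\alpha-p(n-2\sigma)$; your verification that $0\le\theta<\alpha$ and that $v$'s exponent is subcritical for the new weight is exactly the paper's computation. The genuine divergence is in the core symmetry step: you run the method of moving planes in integral form directly on $u$, whereas the paper (Theorem \ref{THM=03}, via Lemmas \ref{LeM=001}, \ref{S67_Le=001} and \ref{LeM=002}) uses the method of moving spheres centered at points $x\ne 0$ and only extracts the reflection inequalities across hyperplanes at the very end, by sending the center of the sphere to infinity.

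This choice is not cosmetic, and your Part (1) has a genuine gap at the step ``for $\lambda$ sufficiently negative the front coefficient is $<1$'' (and again at the narrow-region step needed to slide $\lambda_0$ up to $0$). The quantity $\| |\cdot|^{-\alpha}u^{p-1}\|_{L^{n/(2\sigma)}(W_\lambda)}$ is taken over a subset of the unbounded half-space $\Sigma_\lambda$, and the hypotheses provide no pointwise upper bound on $u$ at infinity beyond membership in $\mathcal{L}_\sigma(\mathbb{R}^n)$ and the integral bounds of Proposition \ref{Plo-P02}; only the Fatou \emph{lower} bound $u(y)\gtrsim |y|^{2\sigma-n}$ is available. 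Concretely, $u(x)=c\,|x|^{-(n-2\sigma)/2}$ is, for a suitable $c>0$, an admissible singular solution in the critical case $\alpha=0$, $p=\frac{n+2\sigma}{n-2\sigma}$, and for it $\int_{\Sigma_\lambda}\big(u^{p-1}\big)^{n/(2\sigma)}dy=c'\int_{\Sigma_\lambda}|y|^{-n}dy=+\infty$ for every $\lambda$; the same failure occurs for $c\,|x|^{-\frac{2\sigma-\alpha}{p-1}}$ throughout the range of Part (1). Hence the self-improving estimate is vacuous at the start, and at the sliding step smallness of $|W_\lambda|$ does not force smallness of the coefficient when the integrand is not globally in $L^{n/(2\sigma)}$. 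The classical remedy (Chen--Li--Ou) of first Kelvin-transforming about a point $x_0\ne 0$ to gain the decay $|x-x_0|^{2\sigma-n}$ does not help here: it recenters the weight and, for $p<p^*_{\sigma,\alpha}$, introduces the non-translation-invariant factor $(\lambda/|z-x_0|)^{n+2\sigma-2\alpha-p(n-2\sigma)}$, which is incompatible with reflections across planes. The moving-sphere method avoids all of this: the bad set is confined to a bounded annulus around the sphere (cf.\ \eqref{5CL6-04}), the exterior region is handled by the Fatou bound alone, and the factor $(\lambda/|z-x|)^{n+2\sigma-2\alpha-p(n-2\sigma)}\le 1$ together with Lemma \ref{S67_Le=001} makes the weight comparison work for all $p\le\frac{n+2\sigma-2\alpha}{n-2\sigma}$. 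To rescue your route you would need an a priori upper bound $u(y)\le C|y|^{2\sigma-n}$ near infinity, which is false for the singular solutions your theorem must cover; otherwise you should switch to spheres.
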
 

Clearly,  Theorem \ref{THM=01}  allows a solution $u$ to be singular at the origin.   In the case  $\alpha=0$ and $p=\frac{n+2\sigma}{n-2\sigma}$,  all of the solutions of \eqref{Har=01} with removable singularity have been classified in Theorem \ref{THM=02hy01}.  Note also that, for $0< \alpha < 2\sigma$, the exponent $\frac{n+2\sigma-\alpha}{n-2\sigma}$ in Theorem \ref{THM=01}  is bigger than the critical exponent $p_{\sigma,\alpha}^*=\frac{n+2\sigma -2\alpha}{n-2\sigma}$.    As a special case,  by Theorem \ref{THM=01} and Remark \ref{Re=098m01} $(1)$ before,   we have the following symmetry result for the higher order Hardy-H\'enon equations.

\begin{corollary}\label{Cor=0hj1}
Let  $0< m  < n/2$ be an integer and $0\leq \alpha < 2m$. Suppose that $u \in C(\mathbb{R}^n \backslash \{0\})$ is a nonnegative distributional solution of 
\be\label{Integqek5xfb} 
(-\Delta)^m u = |x|^{-\alpha}u^{p} ~~~~~~  \textmd{in} ~ \mathbb{R}^n \backslash \{0\}, 
\ee 
and $u$ has a  non-removable singularity at the origin when $\alpha=0$ and $p=\frac{n+2m}{n-2m}$.  If $\frac{n-\alpha}{n-2m} \leq  p \leq \frac{n+2m-\alpha}{n-2m}$, then $u$ is radially symmetric with respect to the origin.    
\end{corollary}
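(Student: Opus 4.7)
The plan is to obtain Corollary~\ref{Cor=0hj1} as a direct specialization of Theorem~\ref{THM=01} to the integer-order case $\sigma=m$, combined with Remark~\ref{Re=098m01}(1) to reconcile the growth hypothesis. First, I would observe that the exponent range $\frac{n-\alpha}{n-2m} \leq p \leq \frac{n+2m-\alpha}{n-2m}$ in the corollary is precisely the union of the two sub-ranges treated separately in parts~(1) and~(2) of Theorem~\ref{THM=01}: part~(1) handles $\frac{n-\alpha}{n-2m} \leq p \leq \frac{n+2m-2\alpha}{n-2m}$ and part~(2) handles $\frac{n+2m-2\alpha}{n-2m} < p \leq \frac{n+2m-\alpha}{n-2m}$. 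In each case the conclusion implies that $u$ is radially symmetric about the origin (part~(1) additionally asserts monotone decrease, which is stronger than what the corollary demands). Hence, once the hypotheses of Theorem~\ref{THM=01} are matched with $\sigma=m$, radial symmetry on the full range stated in the corollary follows at once.

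Second, I would check hypothesis compatibility. The continuity $u\in C(\mathbb{R}^n \backslash \{0\})$, the condition $0\leq \alpha < 2m$, the distributional solvability of \eqref{Integqek5xfb} on $\mathbb{R}^n \backslash \{0\}$, and the non-removable-singularity clause in the conformal case $\alpha=0$, $p=\frac{n+2m}{n-2m}$ all transfer verbatim. The only apparent gap is the global growth restriction $u\in \mathcal{L}_\sigma(\mathbb{R}^n)$ that appears in Theorem~\ref{THM=01} but is absent from the corollary. This is exactly the scenario addressed by Remark~\ref{Re=098m01}(1): when $\sigma=m$ is a positive integer, $(-\Delta)^m$ is a local differential operator, so the definition of a distributional solution only needs $u\in L^1_{loc}(\mathbb{R}^n)$, and the rescaling improvement recorded in Remark~\ref{Re=098m01}(1) yields the integral representation \eqref{Int=21001} of Theorem~\ref{THM=02}(2) (applicable since $p\geq \frac{n-\alpha}{n-2m}$) without imposing any fractional-tail decay on $u$. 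Because $u$ is continuous away from the origin and $|\cdot|^{-\alpha}u^p$ is locally integrable on $\mathbb{R}^n \backslash \{0\}$ by the distributional framework, all premises of Theorem~\ref{THM=01} are met.

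The main obstacle is therefore essentially bookkeeping rather than mathematical: one must correctly identify that the integer-order setting removes the need for the nonlocal-tail condition $u\in \mathcal{L}_m(\mathbb{R}^n)$, and this is precisely the content of Remark~\ref{Re=098m01}(1). The substantive analytic work has already been done in Theorem~\ref{THM=02} (the integral representation obtained without super poly-harmonic properties) and in Theorem~\ref{THM=01} (the moving-plane/sphere analysis on the integral equation \eqref{Int=21001}). With those in hand, Corollary~\ref{Cor=0hj1} is a clean packaging of the two parts of Theorem~\ref{THM=01} in the integer-order regime.
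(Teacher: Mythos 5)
Your proposal is correct and reproduces exactly the paper's intended derivation: specialize Theorem~\ref{THM=01} to $\sigma=m$, note that parts~(1) and~(2) together cover the stated range $\frac{n-\alpha}{n-2m}\le p\le\frac{n+2m-\alpha}{n-2m}$ (both giving radial symmetry), and invoke Remark~\ref{Re=098m01}(1), backed by the rescaling estimates of Proposition~\ref{mInt=0hg01}, to drop the $\mathcal{L}_\sigma(\mathbb{R}^n)$ hypothesis in the integer-order setting. The paper's own justification is the one-line citation ``by Theorem~\ref{THM=01} and Remark~\ref{Re=098m01}(1)'', which you have simply spelled out.
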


As mentioned earlier, these radial symmetry results would be useful in studying the higher order singular Yamabe problems and characterizing isolated singularities of the higher order Hardy-H\'enon equations.  In the case $\alpha=0$, the radial symmetry in Theorem \ref{THM=01} has been showed by Caffarelli-Gidas-Spruck \cite{CGS} for  $\sigma=1$,  by Lin \cite{Lin98} for $\sigma=2$ and by Caffarelli-Jin-Sire-Xiong \cite{CJSX} for $0< \sigma<1$.
In the case $0< \alpha <2\sigma$, Theorem \ref{THM=01} was proved by Li-Bao \cite{LB} for positive classical solutions of \eqref{Har=01} with non-removable singularity at the origin when $0< \sigma <1$ and $\frac{n-\alpha}{n-2\sigma} \leq  p \leq  \frac{n+2\sigma-2\alpha}{n-2\sigma}$.   Note that \cite{CJSX,LB} applied the well-known extension formula for the fractional Laplacian introduced by Caffarelli-Silvestre \cite{CS} to prove these symmetry results. Moreover, when $0< \alpha <2\sigma$, Theorem \ref{THM=01}  was established by Lu-Zhu \cite{LZ} for positive weak solutions $u \in H^\sigma (\mathbb{R}^n)$ of \eqref{Har=01} in $\mathbb{R}^n$ with the critical exponent $p=p_{\sigma,\alpha}^*=\frac{n+2\sigma-2\alpha}{n-2\sigma}$.  Hence, on the one hand,  instead of Caffarelli-Silvestre's extension, we use a unified approach to show radial symmetry of singular positive solutions of \eqref{Har=01} for all $\sigma \in (0, n/2)$ and $\alpha\in [0, 2\sigma)$.   On the other hand,  our  Theorem \ref{THM=01} in a weaker condition establishes the radial symmetry of entire solutions to \eqref{Har=01} in  $\mathbb{R}^n$  when $0< \alpha < 2\sigma$ and $p_{\sigma,\alpha}^* \leq p \leq \frac{n+2\sigma-\alpha}{n-2\sigma}$.

In the case $\alpha=0$ and $p=\frac{n+2\sigma}{n-2\sigma}$,  it is well-known that $c|x|^{-\frac{n-2\sigma}{2}}$ is a singular solution of \eqref{Har=01} with a positive constant $c$ depending only on $n$ and $\sigma$, the other singular solutions, e.g.,  the Fowler solutions,  of \eqref{Har=01} were obtained by Jin-Xiong \cite{JX19}.  In the same case,  the existence of Fowler-type singular solutions of \eqref{Har=01}  for $0< \sigma < 1$ and $\sigma=2$  was shown earlier by DelaTorre-del Pino-Gonz\'{a}lez-Wei \cite{DPGW} and Guo-Huang-Wang-Wei \cite{GHWW}, respectively.  We also remark that the classification of singular positive solutions to \eqref{Har=01} with $\alpha=0$ and $p=\frac{n+2\sigma}{n-2\sigma}$ was established by Fowler \cite{Fow} for $\sigma=1$ and by Frank-K\"{o}nig \cite{Fr-K19} for $\sigma=2$. 

The rest of this paper is organized as follows. In Section \ref{S2}, we establish the integral representation for \eqref{Har=01} stated in Theorem \ref{THM=02}. In Section \ref{S2=0hy}, we prove the Liouville-type theorem in Theorem \ref{THM=02App02}. In Section \ref{Ex}, we establish the existence results in Theorem \ref{THMExi01} and Corollary \ref{24=La-Em-002}. In Section \ref{S3}, we show the radial symmetry in Theorem \ref{THM=01}.  

In the following, we will use $B_r(x)$ to denote the open ball of radius $r$ in $\mathbb{R}^n$ with center $x$, and write $B_r(0)$ as $B_r$ for short. 

\vskip0.10in    
\vskip0.10in 

\noindent{\bf Acknowledgements.} The author is partially supported by NSFC grant 12301140. He would like to thank Professor Tianling Jin for his support and encouragement. He also thanks Professors Dong Ye and Qu\^{o}c Anh Ng\^{o} for helpful discussion on their work \cite{NY}. 

\section{An integral characterization}\label{S2}  

In this section, we prove Theorem \ref{THM=02} which gives an integral characterization of distributional solutions to \eqref{Har=01} for all $\sigma \in (0, n/2)$. Firstly, we show, under suitable assumptions, that a distributional solution $u$ of \eqref{Har=01} is always in $L_{loc}^p(\mathbb{R}^n, |x|^{-\alpha}dx)$ and satisfies the equation \eqref{Har=01} on the whole $\mathbb{R}^n$ in the sense of distributions. Let $\eta_1, \eta_2 \in C^\infty(\mathbb{R}^n)$ be two cut-off functions satisfying $0\leq \eta_1 \leq 1$ and 
$$
\eta_1(x)= 
\begin{cases}
0 ~~~~~~  \textmd{for} ~ |x| \leq 1, \\
1~~~~~~  \textmd{for} ~  |x| \geq 2, 
\end{cases}
~~~~ \eta_2(x) := 1- \eta_1(x).
$$
We define 
\begin{equation}\label{cut=oysbyl}
\eta_\varepsilon(x):=\eta_{1,\varepsilon}(x) \eta_2(x), ~~~~~~ \textmd{where} ~~  \eta_{1,\varepsilon}(x) :=\eta_1 \left( \frac{x}{\varepsilon} \right)~ \textmd{for} ~ \varepsilon>0. 
\end{equation}
Then one has the following estimate for $(-\Delta)^\sigma \eta_\varepsilon$, whose proof can be found in \cite[Lemma 2.1]{AGHW}. 
\begin{lemma}\label{Est-cut6nb5hs}
Let $\eta_\varepsilon$ be as in \eqref{cut=oysbyl}. Then for every $\sigma>0$ we have
\begin{equation}\label{cut=off02}
|(-\Delta)^\sigma\eta_\varepsilon(x)| \leq \frac{C_\sigma}{\varepsilon^{2\sigma}} \frac{1}{\big(1 + \frac{|x|}{\varepsilon}\big)^{n+2\sigma}} + \frac{C_\sigma}{(1 + |x|)^{n+2\sigma}}~~~~~ \textmd{for} ~ \textmd{all} ~  x\in \mathbb{R}^n.  
\end{equation}
\end{lemma}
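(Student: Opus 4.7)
The plan is to reduce the pointwise estimate to a decay bound for $(-\Delta)^\sigma$ applied to a fixed compactly supported smooth bump, and then recover the two terms in \eqref{cut=off02} by scaling. First observe that, for $\varepsilon \le 1/2$, the bump $1 - \eta_{1,\varepsilon}$ is supported in $\{|x| \le 2\varepsilon\} \subset \{|x| \le 1\}$, a region on which $\eta_2 \equiv 1$; hence $(1-\eta_{1,\varepsilon})\eta_2 = 1 - \eta_{1,\varepsilon}$ identically. A case-by-case check on the annuli $\{|x|\le \varepsilon\}$, $\{\varepsilon \le |x|\le 2\varepsilon\}$, $\{2\varepsilon \le |x|\le 1\}$, $\{1\le |x|\le 2\}$ and $\{|x|\ge 2\}$ confirms that
$$
\eta_\varepsilon \,=\, \eta_{1,\varepsilon}\eta_2 \,=\, \eta_2 - (1-\eta_{1,\varepsilon}) \,=\, \eta_2 - \phi_\varepsilon,
$$
where $\phi_\varepsilon(x) := \phi_1(x/\varepsilon)$ and $\phi_1 := 1 - \eta_1 \in C_c^\infty(B_2)$. (The regime $\varepsilon\in[1/2,2]$ is a bounded range of scales and $\eta_\varepsilon \equiv 0$ for $\varepsilon \ge 2$, so both cases can be absorbed into the constants.) By linearity and the homogeneity $(-\Delta)^\sigma[f(\cdot/\varepsilon)](x)=\varepsilon^{-2\sigma}\bigl((-\Delta)^\sigma f\bigr)(x/\varepsilon)$,
$$
(-\Delta)^\sigma \eta_\varepsilon(x) \,=\, (-\Delta)^\sigma \eta_2(x) \,-\, \varepsilon^{-2\sigma}\bigl((-\Delta)^\sigma \phi_1\bigr)(x/\varepsilon).
$$

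The key ingredient is therefore the pointwise decay bound
$$
\bigl|(-\Delta)^\sigma f(x)\bigr| \,\le\, \frac{C_f}{(1+|x|)^{n+2\sigma}} \qquad \text{for all } x\in\mathbb{R}^n,\ f\in C_c^\infty(B_R),
$$
which applied to $\eta_2$ produces the second term of \eqref{cut=off02} and, applied to $\phi_1$ and rescaled, produces the first. For $|x|\le 2R$ this is immediate because $(-\Delta)^\sigma f$ is smooth and bounded. For $|x|>2R$ I split $\sigma=k+\sigma'$ with $k\in\mathbb{N}_0$ and $\sigma'\in[0,1)$. If $\sigma'=0$ the operator is local and $(-\Delta)^\sigma f(x)=0$ outside $B_R$. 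If $\sigma'\in(0,1)$ I set $g:=(-\Delta)^k f\in C_c^\infty(B_R)$, and since $x\notin \operatorname{supp} g$,
$$
(-\Delta)^\sigma f(x) \,=\, (-\Delta)^{\sigma'} g(x) \,=\, -c_{n,\sigma'}\int_{B_R} \frac{g(y)}{|x-y|^{n+2\sigma'}}\,dy.
$$

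The crucial observation is that $g=(-\Delta)^k f$ has vanishing moments of all orders $|\alpha|<2k$: by duality, $\int y^\alpha g(y)\,dy = \int\bigl[(-\Delta)^k y^\alpha\bigr] f(y)\,dy = 0$ since $(-\Delta)^k y^\alpha \equiv 0$ when $|\alpha|<2k$. I may therefore subtract from the kernel $y\mapsto |x-y|^{-(n+2\sigma')}$ its degree-$(2k-1)$ Taylor polynomial at $y=0$ without altering the integral. Taylor's remainder, combined with $|x-y|\ge |x|/2$ for $y\in B_R$ and $|x|>2R$, gives
$$
\left| \frac{1}{|x-y|^{n+2\sigma'}} - T_{2k-1}(y) \right| \,\le\, \frac{C R^{2k}}{|x|^{n+2\sigma'+2k}} \,=\, \frac{C R^{2k}}{|x|^{n+2\sigma}},
$$
so that $|(-\Delta)^\sigma f(x)|\le C|x|^{-(n+2\sigma)}$ for $|x|>2R$. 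Combining with the bounded range yields the decay bound; plugging back into the decomposition above and absorbing the residual $\varepsilon\in[1/2,2]$ range into constants completes the proof of \eqref{cut=off02}.

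The main obstacle is precisely the non-integer higher-order regime $\sigma=k+\sigma'$ with $k\ge 1$ and $\sigma'\in(0,1)$: a naive use of the singular-integral representation of $(-\Delta)^{\sigma'}$ would only produce decay of order $|x|^{-(n+2\sigma')}$, which is insufficient. The cancellation provided by the vanishing moments of $(-\Delta)^k f$, exploited through Taylor expansion of the kernel, is what upgrades this to the sharp $|x|^{-(n+2\sigma)}$ rate demanded by \eqref{cut=off02}.
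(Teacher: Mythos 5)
Your proof is correct, and since the paper does not supply its own argument for this lemma --- it simply cites Lemma~2.1 of Ao--Gonz\'alez--Hyder--Wei \cite{AGHW} --- there is nothing in-text to compare against. Your argument is self-contained and follows what is essentially the expected route: the algebraic identity $\eta_\varepsilon = \eta_2 - \eta_2(\cdot/\varepsilon)$ for $\varepsilon \le 1/2$ (which holds since $(\eta_{1,\varepsilon}-1)(\eta_2-1)\equiv 0$ when $2\varepsilon\le 1$), the scaling law $(-\Delta)^\sigma[f(\cdot/\varepsilon)](x)=\varepsilon^{-2\sigma}\bigl((-\Delta)^\sigma f\bigr)(x/\varepsilon)$, and the single decay estimate $|(-\Delta)^\sigma f(x)|\le C_f(1+|x|)^{-n-2\sigma}$ for a fixed $f\in C_c^\infty(B_R)$. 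The genuinely substantive step is the last one in the higher-order fractional regime $\sigma=k+\sigma'$ with $k\ge 1$, $\sigma'\in(0,1)$, and you correctly identify the mechanism: the naive singular-integral representation only yields $|x|^{-n-2\sigma'}$, and the upgrade to $|x|^{-n-2\sigma}$ comes from the vanishing moments $\int y^\alpha(-\Delta)^k f\,dy=0$ for $|\alpha|<2k$, which lets you subtract the degree-$(2k-1)$ Taylor polynomial of the kernel and invoke the remainder estimate. The edge cases ($\sigma$ an integer, $\varepsilon\ge 2$, $\varepsilon\in[1/2,2]$) are all handled cleanly. I see no gaps.
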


Using Lemma \ref{Est-cut6nb5hs} and a dyadic decomposition argument as in Ao-Gonz\'{a}lez-Hyder-Wei \cite{AGHW},  we prove the following result which has also been obtained in \cite{AGHW} when $\alpha=0$.  See also \cite{CGS,JX19,Y} for the case with $\sigma \in (0, n/2)$ being an integer.      
\begin{proposition}\label{Glo-P01}
Suppose that  $0< \sigma  < n/2$ and $-\infty < \alpha < +\infty$.  Let $u \in \mathcal{L}_\sigma(\mathbb{R}^n)\cap C(\mathbb{R}^n \backslash \{0\})$ be a nonnegative distributional solution of \eqref{Har=01} in $\mathbb{R}^n \backslash \{0\}$ with $p>1$.   Suppose in addition that 
$$
p \geq  \frac{n-\alpha}{n-2\sigma}.  
$$
Then $|\cdot|^{-\alpha}u^p\in L^{1}_{\textmd{loc}} (\mathbb{R}^n)$ and $u$ is a distributional solution to the equation \eqref{Har=01} in $\mathbb{R}^n$, i.e., 
\begin{equation}\label{0725Har=02}
\int_{\mathbb{R}^n} u(x) (-\Delta)^\sigma \varphi(x) dx = \int_{\mathbb{R}^n} |x|^{-\alpha}u(x)^{p} \varphi(x) dx ~~~~~ \textmd{for} ~ \textmd{any} ~ \varphi \in C_c^\infty(\mathbb{R}^n). 
\end{equation}
\end{proposition}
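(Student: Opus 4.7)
The plan is to first establish local integrability of $|\cdot|^{-\alpha} u^p$ near the origin, and then extend the distributional identity from $\mathbb{R}^n \setminus \{0\}$ to all of $\mathbb{R}^n$ by approximation. Since $u \in C(\mathbb{R}^n \setminus \{0\})$, only behavior near $x = 0$ is delicate, so it suffices to bound $\int_{B_1} |x|^{-\alpha} u^p\, dx$. To do this I would feed the admissible cutoff $\eta_\varepsilon \in C_c^\infty(\mathbb{R}^n \setminus \{0\})$ into \eqref{Har=02}, obtaining
\[
\int_{\{2\varepsilon \le |x| \le 1\}} |x|^{-\alpha} u^p\, dx \;\le\; \int_{\mathbb{R}^n} |x|^{-\alpha} u^p \eta_\varepsilon\, dx \;=\; \int_{\mathbb{R}^n} u\,(-\Delta)^\sigma \eta_\varepsilon\, dx,
\]
and then combine Lemma \ref{Est-cut6nb5hs} with the identity $\varepsilon^{-2\sigma}(1+|x|/\varepsilon)^{-n-2\sigma} = \varepsilon^n(\varepsilon + |x|)^{-n-2\sigma}$ to reduce the right-hand side to $C\|u\|_{\mathcal{L}_\sigma}$ (which absorbs the tail for $|x| \gtrsim 1$) plus a sum over dyadic annuli $A_k = \{2^{k+1}\varepsilon \le |x| \le 2^{k+2}\varepsilon\}$ that together tile $\{\varepsilon \le |x|\le 1\}$.

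The key ingredient is a H\"older estimate with the weight $|x|^{\alpha/(p-1)}$, which is locally integrable because the hypothesis $p \ge \frac{n-\alpha}{n-2\sigma}$ forces $\alpha > -n(p-1)$:
\[
\int_{A_k} u\, dx \;\le\; \Big(\int_{A_k} |x|^{-\alpha} u^p\, dx\Big)^{1/p}\Big(\int_{A_k} |x|^{\alpha/(p-1)}\, dx\Big)^{(p-1)/p} \le\; C\bigl(2^k \varepsilon\bigr)^{(n(p-1)+\alpha)/p} M_k^{1/p},
\]
with $M_k := \int_{A_k}|x|^{-\alpha}u^p\,dx$. Multiplying by the prefactor $2^{-k(n+2\sigma)} \varepsilon^{-2\sigma}$ coming from the dyadic bound on $(-\Delta)^\sigma\eta_\varepsilon$ and summing in $k$ via H\"older on sequences, the geometric factor collapses to $\varepsilon^\theta$ with $\theta := \frac{n(p-1)+\alpha - 2\sigma p}{p}$; a direct calculation shows $\theta \ge 0$ is exactly the condition $p\ge \frac{n-\alpha}{n-2\sigma}$. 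This yields
\[
\int_{\{2\varepsilon \le |x|\le 1\}} |x|^{-\alpha} u^p\, dx \;\le\; C + C\,\varepsilon^\theta \Big(\int_{B_1} |x|^{-\alpha} u^p\, dx\Big)^{1/p}.
\]
Running the same argument on the truncated region $B_1 \setminus B_\delta$ (where the right-hand side is a priori finite thanks to $u \in \mathcal{L}_\sigma\cap C(\mathbb{R}^n\setminus\{0\})$), absorbing via Young's inequality, and then passing $\delta \to 0$, one obtains a uniform bound on $\int_{\{2\varepsilon\le |x|\le 1\}}|x|^{-\alpha}u^p\,dx$; monotone convergence delivers $|\cdot|^{-\alpha} u^p \in L^1(B_1)\subset L^1_{\mathrm{loc}}(\mathbb{R}^n)$.

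For the extension to \eqref{0725Har=02}, given $\varphi\in C_c^\infty(\mathbb{R}^n)$ I would test \eqref{Har=02} against $\eta_{1,\varepsilon}\varphi \in C_c^\infty(\mathbb{R}^n\setminus\{0\})$. The right-hand side converges to $\int|x|^{-\alpha}u^p\varphi\,dx$ by dominated convergence, using the integrability just established. For the left-hand side, write $(-\Delta)^\sigma(\eta_{1,\varepsilon}\varphi) = (-\Delta)^\sigma\varphi - (-\Delta)^\sigma((1-\eta_{1,\varepsilon})\varphi)$: since the correction is bounded by $\|\varphi\|_\infty$ and supported in $B_{2\varepsilon}$, a variant of Lemma \ref{Est-cut6nb5hs} gives $|(-\Delta)^\sigma((1-\eta_{1,\varepsilon})\varphi)(x)|\le C\varepsilon^n(\varepsilon+|x|)^{-n-2\sigma}$, and the same dyadic-H\"older estimate of the previous paragraph shows $\int u\,(-\Delta)^\sigma((1-\eta_{1,\varepsilon})\varphi)\,dx\to 0$ as $\varepsilon\to 0^+$, yielding \eqref{0725Har=02}.

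The main obstacle is the H\"older matching: the exponent $\varepsilon^{\theta}$ from the weight $|x|^{\alpha/(p-1)}$ must compensate the $\varepsilon^{-2\sigma}$ blow-up produced by $(-\Delta)^\sigma \eta_\varepsilon$ near the origin, and this is precisely where the threshold $p\ge\frac{n-\alpha}{n-2\sigma}$ enters. A secondary but genuine technical point is that $\int_{B_1}|x|^{-\alpha}u^p\,dx$ may a priori be infinite, so one must first run the dyadic-H\"older estimate on $B_1\setminus B_\delta$ and close the loop before taking $\delta\to 0$; the $\mathcal{L}_\sigma$ hypothesis provides the mild regularity $u\in L^1_{\mathrm{loc}}(\mathbb{R}^n)$ that keeps all intermediate quantities finite during this bootstrapping.
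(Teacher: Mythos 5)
Your outer argument --- testing with $\eta_\varepsilon$, the dyadic--H\"older estimate on $\{\varepsilon\le|x|\le 1\}$ with the weight $|x|^{\alpha/(p-1)}$, and the identification of $\theta\ge 0$ with the threshold $p\ge\frac{n-\alpha}{n-2\sigma}$ --- matches the first step of the paper's proof. But there is a genuine gap. Lemma \ref{Est-cut6nb5hs} bounds $|(-\Delta)^\sigma\eta_\varepsilon|$ by $C\varepsilon^{-2\sigma}(1+|x|/\varepsilon)^{-n-2\sigma}$ on \emph{all} of $\mathbb{R}^n$; in particular this is of size $\varepsilon^{-2\sigma}$ on the ball $B_{2\varepsilon}$ where $\eta_\varepsilon$ vanishes (the operator is nonlocal, so this region cannot be discarded, and for the composite operator $(-\Delta)^{\sigma'}\circ(-\Delta)^k$ with $\sigma>1$ there is no sign to exploit there either). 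Your annuli $A_k$ tile only $\{\varepsilon\le|x|\le 1\}$, so the term $\varepsilon^{-2\sigma}\int_{B_{2\varepsilon}}u\,dx$ survives on the right-hand side and is addressed nowhere in your plan. It cannot be removed by one more application of H\"older, since that produces $\bigl(\int_{B_{2\varepsilon}}|x|^{-\alpha}u^p\,dx\bigr)^{1/p}$, a quantity about which the tested identity gives no information (the test function is zero on $B_\varepsilon$ and small on $B_{2\varepsilon}$), so Young's inequality has nothing to absorb it into. For a general $u\in L^1_{\mathrm{loc}}$ one only knows $\int_{B_{2\varepsilon}}u=o(1)$, not $O(\varepsilon^{2\sigma})$, so your claimed inequality $\int_{\{2\varepsilon\le|x|\le1\}}|x|^{-\alpha}u^p\,dx\le C+C\varepsilon^\theta(\int_{B_1}|x|^{-\alpha}u^p\,dx)^{1/p}$ is not justified.

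The paper's treatment of exactly this term is the core of the proof and is missing from your proposal: it decomposes $B_{2\varepsilon}$ into the \emph{inner} annuli $\{\varepsilon 2^{-i-1}\le|x|\le\varepsilon 2^{-i}\}$, applies the already-derived estimate \eqref{YHkhbdn=01} \emph{at each smaller scale} to bound $\int_{A_i}|x|^{-\alpha}u^p\,dx$ by $C+C(\varepsilon2^{-i})^{-2\sigma}\int_{B_{\varepsilon2^{-i}}}u\,dx$, sums the resulting geometric series (convergent because $n-(n+2\sigma-\alpha)/p>0$), and closes the self-referential inequality $\int_{B_{2\varepsilon}}u\,dx\le C\varepsilon^{n-(n+2\sigma-\alpha)/p}\bigl(\int_{B_{\varepsilon}}u\,dx\bigr)^{1/p}+C\varepsilon^{n-(n-\alpha)/p}$ with Young's inequality, obtaining $\int_{B_{2\varepsilon}}u\,dx\le C\varepsilon^{2\sigma}$ precisely under $p\ge\frac{n-\alpha}{n-2\sigma}$. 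The same omission recurs in your second step, where $|(-\Delta)^\sigma((1-\eta_{1,\varepsilon})\varphi)|\approx\varepsilon^{-2\sigma}$ on $B_{4\varepsilon}$; there, however, the issue is fixable by a single H\"older once $|\cdot|^{-\alpha}u^p\in L^1(B_1)$ is known. Two minor further points: your geometric sum over the outer annuli collapses to $\varepsilon^\theta$ only when $p>(\alpha-n)/(2\sigma)$ (the paper splits into cases, the other case producing a harmless logarithm); and the $\delta$-truncation is unnecessary, since $\overline{B_1\setminus B_{2\varepsilon}}$ is a compact subset of $\mathbb{R}^n\setminus\{0\}$ on which $u$ is continuous, so all intermediate integrals are automatically finite.
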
 

\begin{proof}
Let $\eta_\varepsilon$ be defined in \eqref{cut=oysbyl} with $\varepsilon \in (0, 1/2)$.   Taking $\eta_\varepsilon$ as a test function for the equation \eqref{Har=01} and using Lemma \ref{Est-cut6nb5hs} yields  
$$
\aligned
\int_{\mathbb{R}^n} |x|^{-\alpha} u^p \eta_\varepsilon dx &= \int_{\mathbb{R}^n} u (-\Delta)^\sigma\eta_\varepsilon dx  \\
& \leq  C + C \varepsilon^{-2\sigma} \int_{\mathbb{R}^n} \frac{u(x)}{\big(1 + \frac{|x|}{\varepsilon}\big)^{n+2\sigma}} dx \\
& \leq C + C \varepsilon^{-2\sigma} \left( \int_{B_{2\varepsilon}} + \int_{B_1 \backslash B_{2\varepsilon}} + \int_{B_1^c} \right) \frac{u(x)}{\big(1 + \frac{|x|}{\varepsilon}\big)^{n+2\sigma}} dx\\
& \leq C + C \varepsilon^{-2\sigma} \left( \int_{B_{2\varepsilon}} u dx +  \int_{B_1 \backslash B_{2\varepsilon}} \frac{u(x)}{\big(1 + \frac{|x|}{\varepsilon}\big)^{n+2\sigma}} dx + \varepsilon^{n+2\sigma}  \right).     
\endaligned 
$$
By H\"{o}lder inequality with $\frac{1}{p} + \frac{1}{p^\prime} =1$,  we have that for $p >1$,
$$
\aligned
\int_{B_1 \backslash B_{2\varepsilon}} \frac{u(x)}{\big(1 + \frac{|x|}{\varepsilon}\big)^{n+2\sigma}} dx  &\leq   \varepsilon^{n+2\sigma} \left( \int_{B_1 \backslash B_{2\varepsilon}} |x|^{-\alpha} u^p dx \right)^{1/p} \left( \int_{B_1 \backslash B_{2\varepsilon}} \frac{|x|^{\alpha p^{\prime}/p}}{(\varepsilon + |x|)^{p^\prime (n+2\sigma)}} dx \right)^{1/{p^\prime}} \\
& \leq  
\begin{cases}
C \varepsilon^{n + (\alpha -n )/p} \left(\int_{B_1 \backslash B_{2\varepsilon}} |x|^{-\alpha} u^p dx \right)^{1/p}  ~ & \textmd{if} ~ p > (\alpha -n) /2\sigma, \\
-C \varepsilon^{n + 2\sigma} \ln \varepsilon \left( \int_{B_1 \backslash B_{2\varepsilon}} |x|^{-\alpha} u^p dx \right)^{1/p} ~  & \textmd{if} ~ p \leq  (\alpha -n) /2\sigma. 
\end{cases}
\endaligned  
$$
Hence,  for $p >1$ and $p \geq  \frac{n-\alpha}{n-2\sigma}$ we have  
$$
\int_{\mathbb{R}^n} |x|^{-\alpha} u^p \eta_\varepsilon dx \leq C + C\varepsilon^{-2\sigma}  \int_{B_{2\varepsilon}} u dx + C \left(\int_{B_1 \backslash B_{2\varepsilon}} |x|^{-\alpha} u^p dx \right)^{1/p}, 
$$
and using Young inequality we obtain 
\begin{equation}\label{YHkhbdn=01}
\int_{\mathbb{R}^n} |x|^{-\alpha} u^p \eta_\varepsilon dx \leq C + C\varepsilon^{-2\sigma}  \int_{B_{2\varepsilon}} u dx.
\end{equation} 
Now, for every integer $i \geq -1$,  we get from H\"{o}lder inequality  and \eqref{YHkhbdn=01} that 
$$
\aligned
\int_{ \{ \frac{\varepsilon}{2^{i+1}} \leq |x| \leq \frac{\varepsilon}{2^{i}}  \} } u dx  & \leq  \left( \frac{\varepsilon}{2^{i}}  \right)^{n- (n-\alpha)/p} \left( \int_{ \{ \frac{\varepsilon}{2^{i+1}} \leq |x| \leq \frac{\varepsilon}{2^{i}}  \} }  |x|^{-\alpha} u^p dx  \right)^{1/p} \\
& \leq  C \left( \frac{\varepsilon}{2^{i}}  \right)^{n- (n + 2\sigma -\alpha)/p} \left( \int_{ \{ |x| \leq \frac{\varepsilon}{2^{i+1}} \} }  u dx  \right)^{1/p} + C \left( \frac{\varepsilon}{2^{i}}  \right)^{n- (n-\alpha)/p} \\
& \leq  C \left( \frac{\varepsilon}{2^{i}}  \right)^{n- (n + 2\sigma -\alpha)/p} \left( \int_{ \{ |x| \leq \varepsilon \} }  u dx  \right)^{1/p} + C \left( \frac{\varepsilon}{2^{i}}  \right)^{n- (n-\alpha)/p}. 
\endaligned 
$$
Note that we always have $n - (n + 2\sigma -\alpha)/p > 0$ since $n - (n + 2\sigma -\alpha)/p > 0 \iff p > (n+2\sigma-\alpha)/n$ which is true by 
$$
\begin{cases}
p >1 \geq  (n+2\sigma-\alpha)/n & ~~~~ \textmd{if} ~  (n+2\sigma-\alpha)/n \leq 1 \\
p \geq (n-\alpha) / (n-2\sigma) > (n+2\sigma-\alpha)/n  & ~~~~ \textmd{if} ~  (n+2\sigma-\alpha)/n > 1. 
\end{cases} 
$$
Thus, we sum the above inequality for all  integer $i \geq -1$ to  get 
$$
\int_{B_{2\varepsilon}} u dx \leq C \varepsilon^{n- (n + 2\sigma -\alpha)/p} \left( \int_{ \{ |x| \leq \varepsilon \} }  u dx  \right)^{1/p} + C \varepsilon^{n- (n-\alpha)/p}.  
$$
Using Young inequality again, we obtain
\be\label{sdjk834=0n}
\int_{B_{2\varepsilon}} u dx \leq C \varepsilon^{n- \frac{2\sigma -\alpha}{p-1}} + C \varepsilon^{n- \frac{n-\alpha}{p}}.  
\ee
Since $n- \frac{2\sigma -\alpha}{p-1} \geq 2\sigma$ and $n- \frac{n-\alpha}{p} \geq 2\sigma$ due to $p\geq \frac{n-\alpha}{n-2\sigma}$, combining  \eqref{sdjk834=0n} with \eqref{YHkhbdn=01}  and sending $\varepsilon \to 0$, we obtain that $\int_{B_1} |x|^{-\alpha} u^p dx < \infty$ and hence, $|\cdot|^{-\alpha}u^p\in L^{1}_{\textmd{loc}} (\mathbb{R}^n)$.    

To complete the proof, we next prove that $u$ is a distributional solution of \eqref{Har=01} in the whole space $\mathbb{R}^n$.  Since $|\cdot|^{-\alpha}u^p\in L^{1}_{\textmd{loc}} (\mathbb{R}^n)$, we only need to show that for every $\varphi \in C_c^\infty(\mathbb{R}^n)$, 
\be\label{Dis=0Asoq=001}
\int_{\mathbb{R}^n} u (-\Delta)^\sigma (\varphi \eta_{1,\varepsilon}) dx \to \int_{\mathbb{R}^n} u (-\Delta)^\sigma \varphi dx 
\ee
as $\varepsilon \to 0$, where $\eta_{1, \varepsilon}$ is defined as in \eqref{cut=oysbyl} with $ \varepsilon  \in (0, 1/4)$. For this purpose, we define
$$
I_\varepsilon(x):=(-\Delta)^\sigma (\varphi \eta_{1,\varepsilon}) (x) - \eta_{1,\varepsilon}(x) (-\Delta)^\sigma \varphi (x), ~~~~ x \in \mathbb{R}^n. 
$$
Let $k$ be the integer part of $\sigma$, that is, $\sigma := k + \sigma^\prime$ with $\sigma^\prime \in (0, 1)$ and $k \in \{0\} \cup \mathbb{N}$. We consider the two cases $|x| \geq 4\varepsilon$  and  $|x| \leq 4\varepsilon$ to estimate $I_\varepsilon(x)$.  

{\it Case $1$}:  $|x| \geq  4\varepsilon$. In this case, we note that all derivatives of $\eta_{1, \varepsilon}$ at $x$ are equal to $0$.  Then we have  (for simplicity we omit the constant $c_{n,\sigma} $ in \eqref{Fda-Lapcc=01})  
$$
\aligned
(-\Delta)^\sigma (\varphi \eta_{1,\varepsilon}) (x) & = \int_{\mathbb{R}^n}  \frac{\eta_{1,\varepsilon}(x) [(-\Delta)^k \varphi(x) - (-\Delta)^k \varphi (z)]  + (-\Delta)^k [\varphi(z) (1 - \eta_{1,\varepsilon}(z) )] }{|x - z|^{n +2\sigma^\prime}}  dz \\
& = \eta_{1,\varepsilon}(x) (-\Delta)^\sigma \varphi (x) + \int_{\mathbb{R}^n} \frac{ (-\Delta)^k [\varphi(z) (1 - \eta_{1,\varepsilon}(z) )] }{|x - z|^{n +2\sigma^\prime}}  dz \\
& = \eta_{1,\varepsilon}(x) (-\Delta)^\sigma \varphi (x) + C \int_{\mathbb{R}^n} \frac{  \varphi(z) (1 - \eta_{1,\varepsilon}(z) ) }{|x - z|^{n +2\sigma}}  dz  \\
\endaligned
$$
where the integration by parts is used in the last equality. Notice also that the integrand is not singular at $z=x$ since the function $1 - \eta_{1,\varepsilon}$ is supported  in $B_{2\varepsilon}$. Hence  we have 
\be\label{HK=Appo=081}
|I_\varepsilon(x)| \leq \int _{B_{2\varepsilon}} \frac{  |\varphi(z) (1 - \eta_{1,\varepsilon}(z) ) |}{|x - z|^{n +2\sigma}}  dz \leq  C \frac{\varepsilon^n}{|x|^{n+2\sigma}} ~~~~ \textmd{for} ~  |x| \geq 4\varepsilon. 
\ee

{\it Case $2$}:  $|x| \leq  4\varepsilon$.  In this case,  instead of estimating $I_\varepsilon(x)$ we estimate $(-\Delta)^\sigma (\varphi\eta_{1,\varepsilon})$. Denote $\varphi_\varepsilon :=\varphi\eta_{1,\varepsilon}$. In fact, we have  
$$
(-\Delta)^\sigma \varphi_\varepsilon (x) = \frac{1}{2} \int_{\mathbb{R}^n} \frac{2 (-\Delta)^k \varphi_\varepsilon(x) - (-\Delta)^k \varphi_\varepsilon (x+y) - (-\Delta)^k \varphi_\varepsilon (x-y)}{|y|^{n+2 \sigma^\prime}} dy. 
$$
Moreover,  there exists $C>0$ depending on $\varphi$ such that the integrand in the above integral  can be controlled by 
$$
\frac{C}{\varepsilon^{2k} |y|^{n+2\sigma^\prime }} \min \left\{ \frac{|y|^2}{\varepsilon^2},  1 \right\}. 
$$
Thus, we obtain 
$$
|(-\Delta)^\sigma \varphi_\varepsilon (x)| \leq \frac{C}{\varepsilon^{2\sigma}} ~~~~ \textmd{for} ~ |x| \leq 4\varepsilon. 
$$
Hence
\be\label{HK=Appo=082}
|I_\varepsilon(x)| \leq  \frac{C}{\varepsilon^{2\sigma}}  ~~~~ \textmd{for} ~ |x| \leq 4\varepsilon
\ee
due to the boundedness of $\eta_{1, \varepsilon} (-\Delta)^\sigma \varphi$.  Combining \eqref{HK=Appo=081} and \eqref{HK=Appo=082}, we get
\be\label{HK=Appo=083}
|I_\varepsilon(x)| \leq C \frac{1}{\varepsilon^{2\sigma}} \frac{\chi_{B_1}(x)}{\big( 1 + \frac{|x|}{\varepsilon} \big)^{n+2\sigma}} + C \varepsilon^n \frac{\chi_{B_1^c}(x)}{|x|^{n+2\sigma}} ~~~~\textmd{for} ~ \textmd{all}~ x \in \mathbb{R}^n.  
\ee
This is obviously true if $\sigma$ is an integer.  For $p^\prime >1$ with $\frac{1}{p} +\frac{1}{p^\prime} =1$,  we have 
$$
\aligned
\left\| |\cdot|^{\frac{\alpha}{p}}  I_\varepsilon \right\|_{L^{p^\prime}(B_1)}^{p^\prime} \leq C \varepsilon^{-2\sigma p^\prime} \int_{B_\varepsilon} |x|^{\frac{\alpha p^\prime}{p}} dx + C \varepsilon^{n p^\prime} \int_{\{\varepsilon \leq |x| \leq 1\}} |x|^{\frac{\alpha p^\prime}{p} - (n+2\sigma)p^\prime}\leq  C
\endaligned 
$$
with some $C >0$ independent of $\varepsilon$. Hence, for each $\delta \in (0, 1)$, 
$$
\int_{B_\delta} u(x) |I_\varepsilon(x)| dx \leq  \left\| |\cdot|^{-\frac{\alpha}{p}}  u \right\|_{L^{p}(B_\delta)} \left\| |\cdot|^{\frac{\alpha}{p}}  I_\varepsilon \right\|_{L^{p^\prime}(B_\delta)} \leq C \left\| |\cdot|^{-\frac{\alpha}{p}}  u \right\|_{L^{p}(B_\delta)}
$$
uniformly in $\varepsilon$ and $\delta$.  On the other hand, by the assumption $u \in \mathcal{L}_\sigma(\mathbb{R}^n)$ and \eqref{HK=Appo=083},   we have that for any fixed $\delta \in (0, 1)$, 
$$
\lim_{\varepsilon \to 0} \int_{B_\delta^c} u(x) |I_\varepsilon(x)| dx = 0. 
$$
Therefore,  by sending $\delta \to 0$ we obtain 
$$
\lim_{\varepsilon \to 0} \int_{\mathbb{R}^n} u(x) |I_\varepsilon(x)| dx = 0. 
$$
This implies that \eqref{Dis=0Asoq=001} holds and thus, $u$ is a distributional solution to the equation \eqref{Har=01} in $\mathbb{R}^n$. 
\end{proof}   

Now we give some growth estimates for solutions of \eqref{Har=01} near infinity. We start by recalling a result from \cite[Lemma 5.2]{AGHW}. 

\begin{lemma}\label{Aop-decay=01} 
Let $\psi \in C^\infty(\mathbb{R}^n)$ be such that $\psi(x)=\frac{1}{|x|^\tau}$ on $B_1^c$ for some $\tau>0$. Let $\xi \in C^\infty(\mathbb{R}^n)$ be a cut-off function satisfying 
$$
\xi(x) =1 ~~~ \textmd{for} ~ |x|\leq 1 ~~~~~~ \textmd{and} ~~~~~~ \xi(x) =0 ~~~ \textmd{for} ~ |x| \geq 2.  
$$
For any $\varepsilon>0$, we define $\xi_\varepsilon:=\xi(\varepsilon x)$ and $\psi_\varepsilon(x):= \psi  \xi_\varepsilon(x)$. Then for every $\sigma>0$ we have
$$
(-\Delta)^\sigma \psi_\varepsilon \to (-\Delta)^\sigma \psi ~~~~ \textmd{locally} ~ \textmd{uniformly} ~ \textmd{in} ~ \mathbb{R}^n  ~ \textmd{as} ~ \varepsilon \to 0.
$$
Moreover, there exists $C=C(n, \sigma, \varphi) >0$ (independent of $\varepsilon$) such that 
$$
|(-\Delta)^\sigma \psi_\varepsilon (x)| \leq C
\begin{cases}
(1 + |x|)^{-2\sigma-\tau} ~~~~~~ &\textmd{if} ~ \tau <n,\\
(1 + |x|)^{-2\sigma-\tau}\log(2 + |x|) ~~~~~~ &\textmd{if} ~ \tau =n,\\
(1 + |x|)^{-2\sigma-n} ~~~~~~ &\textmd{if} ~ \tau >n. 
\end{cases}
$$
\end{lemma}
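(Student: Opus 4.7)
Write $\sigma = k + \sigma'$ with $k \in \mathbb{N} \cup \{0\}$ and $\sigma' \in [0,1)$ and factor $(-\Delta)^\sigma = (-\Delta)^{\sigma'} \circ (-\Delta)^k$; the case $\sigma' = 0$ is purely classical. The proof reduces to two tasks: uniform bounds on $u_\varepsilon := (-\Delta)^k \psi_\varepsilon$ and its derivatives, and a pointwise estimate for $(-\Delta)^{\sigma'} u_\varepsilon$. The convergence statement then follows by applying the same estimate to $\psi - \psi_\varepsilon$.

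\textbf{Step 1: uniform control of $u_\varepsilon$.} I would split $\mathbb{R}^n$ into $B_1$, $B_{1/(2\varepsilon)} \setminus B_1$, $B_{4/\varepsilon} \setminus B_{1/(2\varepsilon)}$, and $B_{4/\varepsilon}^c$, and check in each that
$$
|D^j u_\varepsilon(x)| \leq C_j (1+|x|)^{-\tau-2k-j} \chi_{B_{4/\varepsilon}}(x),
$$
uniformly in $\varepsilon$. On $B_1$ the function $\psi$ is smooth; on $B_{1/(2\varepsilon)} \setminus B_1$ one has $\psi_\varepsilon = |x|^{-\tau}$ and the bound is explicit; on the transition annulus the Leibniz rule combined with $|D^j \xi_\varepsilon| \lesssim \varepsilon^j \sim |x|^{-j}$ and $|D^j \psi| \lesssim |x|^{-\tau-j}$ yields the claim; outside $B_{4/\varepsilon}$ the function vanishes. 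I would also record the crucial vanishing-moment property $\int x^\beta u_\varepsilon\, dx = \int \psi_\varepsilon (-\Delta)^k x^\beta\, dx = 0$ for $|\beta| < 2k$, obtained by integration by parts against polynomials killed by $(-\Delta)^k$.

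\textbf{Step 2: the pointwise decay.} For $|x|$ large I would use the symmetric form
$$
(-\Delta)^{\sigma'} u_\varepsilon(x) = \tfrac{c_{n,\sigma'}}{2} \int_{\mathbb{R}^n} \frac{2u_\varepsilon(x) - u_\varepsilon(x+y) - u_\varepsilon(x-y)}{|y|^{n+2\sigma'}}\, dy
$$
and split $|y| \leq |x|/2$ against $|y| > |x|/2$. On the near region a second-order Taylor expansion together with the bound $|D^2 u_\varepsilon(\xi)| \lesssim |x|^{-\tau-2k-2}$ on $B_{|x|/2}(x)$ delivers a contribution $\lesssim |x|^{-\tau-2\sigma}$. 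On the far region the $u_\varepsilon(x)$ term contributes $\lesssim |u_\varepsilon(x)|\, |x|^{-2\sigma'} \lesssim |x|^{-\tau-2\sigma}$, while the remaining integral $\int_{|z-x|>|x|/2} |u_\varepsilon(z)|\, |z-x|^{-n-2\sigma'}\, dz$ splits further into $|z| > 2|x|$ (controlled by the tail of $u_\varepsilon$) and $|z| \leq 2|x|$ (bounded by $|x|^{-n-2\sigma'} \int_{B_{2|x|}} |u_\varepsilon|$). Computing the last integral gives the trichotomy: decay $|x|^{-\tau-2\sigma}$, a log, or a saturated bound of order $|x|^{-n-2\sigma'}$ according to whether $\tau + 2k$ is less than, equal to, or greater than $n$. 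To promote the threshold from $\tau+2k$ vs $n$ to $\tau$ vs $n$, I would subtract against $u_\varepsilon$ the Taylor polynomial of the singular kernel $|z-x|^{-n-2\sigma'}$ at $x$ to degree $2k-1$, using the $2k$ vanishing moments of $u_\varepsilon$ from Step 1. This is the technical heart of the argument and yields precisely the three cases in the statement.

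\textbf{Step 3: local uniform convergence.} For $K$ compact and $\varepsilon$ small enough that $K \subset B_{1/(2\varepsilon)}$, the function $\psi - \psi_\varepsilon = \psi(1 - \xi_\varepsilon)$ is supported in $\{|z| \geq 1/\varepsilon\}$, and since $(-\Delta)^k$ is local, so is $(-\Delta)^k(\psi - \psi_\varepsilon)$, with pointwise bound $\lesssim |z|^{-\tau-2k}$ on its support. Applying $(-\Delta)^{\sigma'}$ at $x \in K$ gives
$$
\bigl|(-\Delta)^\sigma(\psi - \psi_\varepsilon)(x)\bigr| \lesssim \int_{|z|\geq 1/\varepsilon} \frac{|z|^{-\tau-2k}}{|x-z|^{n+2\sigma'}}\, dz \lesssim \varepsilon^{\min(\tau,n)+2\sigma},
$$
(with a log factor in the critical case), which tends to $0$ uniformly for $x \in K$.

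\textbf{Main obstacle.} The delicate point is that the naive pointwise bound on $u_\varepsilon$ gives a threshold in $\tau + 2k$ rather than in $\tau$; recovering the correct $\tau$-threshold, and in particular extracting the improved $(1+|x|)^{-n-2\sigma}$ decay in the $\tau > n$ regime, requires an essential use of the $2k$ vanishing moments of $u_\varepsilon$ via polynomial subtraction in the far-field integral. Everything else is a careful but standard splitting argument, and when $\sigma$ is an integer the whole proof collapses to the elementary four-region Leibniz computation.
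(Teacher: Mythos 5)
This lemma is quoted in the paper from \cite[Lemma 5.2]{AGHW} (Ao--Gonz\'alez--Hyder--Wei); the paper does not reproduce its proof, so there is no in-paper argument to compare against.

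Your outline correctly identifies the central technical difficulty --- the naive pointwise bound on $u_\varepsilon := (-\Delta)^k\psi_\varepsilon$ only produces the trichotomy threshold at $\tau+2k$ versus $n$, not $\tau$ versus $n$ --- and you correctly propose using the $2k$ vanishing moments of $u_\varepsilon$ to recover the sharper threshold. Steps~1 and~3 are essentially sound, modulo a minor slip in Step~3: since $(-\Delta)^k(\psi-\psi_\varepsilon)$ is bounded by $|z|^{-\tau-2k}$ on $\{|z|\geq 1/\varepsilon\}$ and one estimates against $|x-z|^{-n-2\sigma'}\gtrsim|z|^{-n-2\sigma'}$, the resulting bound is $\varepsilon^{\tau+2\sigma}$, not $\varepsilon^{\min(\tau,n)+2\sigma}$; this does not affect the convergence conclusion.

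The genuine gap is in the moment-cancellation step of Step~2. First, "the Taylor polynomial of $|z-x|^{-n-2\sigma'}$ at $x$" is ill-posed --- the kernel is singular at $z=x$; what you presumably mean is the degree-$(2k-1)$ Taylor polynomial $T(z)$ of $z\mapsto|z-x|^{-n-2\sigma'}$ based at $z=0$. Second, and more seriously, the vanishing moments $\int_{\mathbb{R}^n}z^\beta u_\varepsilon\,dz=0$ are global, while the problematic integral lives on the restricted region $\{|z-x|>|x|/2\}$. After writing $\int_{\text{far}}u_\varepsilon K = \int_{\mathbb{R}^n}u_\varepsilon(K\chi_{\text{far}}-T)$, one must control three pieces: the near-ball correction $\int_{|z-x|<|x|/2}u_\varepsilon T$, the Taylor remainder $\int_{|z|\leq|x|/2,\,\text{far}}u_\varepsilon(K-T)$, and the outer far-field term $\int_{|z|>|x|/2,\,\text{far}}u_\varepsilon(K-T)$. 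The first two can be bounded by $|x|^{-\tau-2\sigma}$ (resp.\ the trichotomy) exactly as you want. But the third, specifically its $T$ part, involves
$$
\sum_{j<2k}|x|^{-n-2\sigma'-j}\int_{|x|/2<|z|<4/\varepsilon}|u_\varepsilon(z)|\,|z|^{j}\,dz ,
$$
and the $j$-summand with $n+j-\tau-2k>0$ (which occurs whenever $k\geq 1$ and $\tau<n-1$, e.g.\ $j=2k-1$) produces a factor $\varepsilon^{-(n+j-\tau-2k)}$. This is dominated by the target $|x|^{-\tau-2\sigma}$ only when $|x|\gtrsim 1/\varepsilon$; in the intermediate regime $1\ll|x|\ll 1/\varepsilon$ --- precisely the regime where $\mathrm{supp}\,u_\varepsilon$ extends beyond $B_{2|x|}$ --- the estimate you sketch does not close. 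Some additional device is required there (for instance, splitting $\psi_\varepsilon=\psi_1+(\psi_\varepsilon-\psi_1)$ with $\psi_1:=\psi\xi$ a fixed compactly supported profile, or carrying the moment subtraction only when $|x|\gg 1/\varepsilon$ and treating $|x|\lesssim 1/\varepsilon$ by comparison with the homogeneous profile $(-\Delta)^k\psi$), and you do not indicate what that device is. Since you yourself flag the moment argument as "the technical heart", leaving this regime unhandled makes the proof incomplete as it stands.
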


Based on Lemma \ref{Aop-decay=01}, a bootstrap argument allows us to give the following growth estimate of solutions to \eqref{Har=01} near infinity. 

\begin{proposition}\label{Plo-P02}
Suppose that  $0< \sigma  < n/2$ and $-\infty < \alpha < 2\sigma$.  Let $u \in \mathcal{L}_\sigma(\mathbb{R}^n)$ be a nonnegative distributional solution of
\begin{equation}\label{0727Har=03}
(-\Delta)^\sigma u = |x|^{-\alpha}u^{p} ~~~~~~  \textmd{in} ~ \mathbb{R}^n 
\end{equation}
with  $|\cdot|^{-\alpha}u^p \in L_{\textmd{loc}}^1(\mathbb{R}^n)$  for some  $ p>1$.  
\begin{itemize}
\item [$(1)$] If $1< p < \frac{n-\alpha}{n-2\sigma}$, then
\begin{equation}\label{Glo-E002gyd}
\int_{\mathbb{R}^n} \frac{|x|^{-\alpha}u(x)^p}{1 + |x|^{\gamma}} dx < +\infty ~~~~~~ \textmd{for} ~ \textmd{every}  ~ \gamma > 0. 
\end{equation}

\item [$(2)$] If $p \geq  \frac{n-\alpha}{n-2\sigma}$, then
\begin{equation}\label{Glo-E002}
\int_{\mathbb{R}^n} \frac{|x|^{-\alpha}u(x)^p}{1 + |x|^{\gamma}} dx < +\infty ~~~~~~ \textmd{for} ~ \textmd{every}  ~ \gamma > n - 2\sigma +\frac{\alpha-2\sigma}{p-1}.  
\end{equation}
\end{itemize}
\end{proposition}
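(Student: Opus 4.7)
The plan is to run a Hölder-bootstrap iteration that alternately improves a weighted $L^1$ bound on $u$ and a weighted $L^p$ bound on $|x|^{-\alpha}u^p$, starting from the raw hypothesis $u\in\mathcal{L}_\sigma(\mathbb{R}^n)$. The key tool is Lemma \ref{Aop-decay=01} applied to $\psi(x)=|x|^{-\tau}$ on $B_1^c$, which provides sharp pointwise bounds on $(-\Delta)^\sigma\psi_\varepsilon$ calibrated to the value of $\tau$ relative to $n$.

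\textbf{Base case.} For any $\tau>n$, testing \eqref{0727Har=03} against $\psi_\varepsilon$ and invoking the regime $\tau>n$ of Lemma \ref{Aop-decay=01} yields, uniformly in $\varepsilon$,
$$
\int_{\mathbb{R}^n}|x|^{-\alpha}u^p\,\psi_\varepsilon\,dx=\int_{\mathbb{R}^n}u\,(-\Delta)^\sigma\psi_\varepsilon\,dx\leq C\int_{\mathbb{R}^n}\frac{u(x)}{(1+|x|)^{n+2\sigma}}\,dx<\infty.
$$
Sending $\varepsilon\to 0$ and using Fatou together with $|\cdot|^{-\alpha}u^p\in L^1_{\mathrm{loc}}$, I get $\int_{\mathbb{R}^n}|x|^{-\alpha}u^p(1+|x|)^{-\tau}\,dx<\infty$ for every $\tau>n$.

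\textbf{Hölder upgrade.} Assume inductively that the latter integral is finite for every $\tau>\tau_0$. Splitting at $|x|=1$ (the region $B_1$ being harmless by local integrability) and writing $u=[|x|^{-\alpha/p}(1+|x|)^{-\beta_1}u]\cdot[|x|^{\alpha/p}(1+|x|)^{-\beta_2}]$ with $\beta_1+\beta_2=\beta$, Hölder with exponents $p,p'$ gives
$$
\int_{B_1^c}\frac{u}{(1+|x|)^\beta}\,dx\leq\Bigl(\int_{B_1^c}\frac{|x|^{-\alpha}u^p}{(1+|x|)^{p\beta_1}}\,dx\Bigr)^{1/p}\Bigl(\int_{B_1^c}\frac{|x|^{\alpha p'/p}}{(1+|x|)^{p'\beta_2}}\,dx\Bigr)^{1/p'}.
$$
Choosing $p\beta_1>\tau_0$ and $\beta_2>\alpha/p+n/p'$ makes both factors finite, so $\int_{\mathbb{R}^n}u(1+|x|)^{-\beta}\,dx<\infty$ whenever $\beta>\beta^\ast:=n+(\tau_0-n+\alpha)/p$.

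\textbf{Feedback and iteration.} Now pick $0<\tau<n$ with $2\sigma+\tau>\beta^\ast$ and retest the equation with $\psi_\varepsilon$, this time using the regime $\tau<n$ of Lemma \ref{Aop-decay=01}, which supplies $|(-\Delta)^\sigma\psi_\varepsilon|\leq C(1+|x|)^{-2\sigma-\tau}$. The Hölder step bounds the right-hand side of the testing identity uniformly in $\varepsilon$, and Fatou again upgrades $\tau_0$ to
$$
\tau_0^{\mathrm{new}}=n-2\sigma+\frac{\tau_0-n+\alpha}{p}.
$$
Iterating $\tau_0^{(0)}=n$, $\tau_0^{(k+1)}=n-2\sigma+(\tau_0^{(k)}-n+\alpha)/p$, one verifies that the unique fixed point is
$$
\tau^\ast=n-2\sigma+\frac{\alpha-2\sigma}{p-1},
$$
the iteration is a contraction of ratio $1/p$ toward $\tau^\ast$, and a direct algebraic check gives $\tau^\ast>0\iff p>(n-\alpha)/(n-2\sigma)$. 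Hence in case (2), $\tau^\ast\geq 0$ and the limit of the iteration yields \eqref{Glo-E002}; in case (1), $\tau^\ast<0$ and after finitely many steps the threshold crosses $0$, producing \eqref{Glo-E002gyd}.

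\textbf{Main obstacle.} The delicate point is ensuring that each step of the iteration stays in a regime where Lemma \ref{Aop-decay=01} is applicable: one must verify $0<\tau_0^{(k)}<n$ is maintained after the first iteration (this uses $\alpha<2\sigma$ and $p>1$), and that the transition across $\tau_0^{(k)}=0$ in case (1) is handled by the flexibility of choosing $\tau$ slightly above $\tau_0^{(k)}$ at each step. One must also keep careful track of the integrals near the origin, where $\psi_\varepsilon$ deviates from $|x|^{-\tau}$; these contributions are finite thanks to the standing assumption $|\cdot|^{-\alpha}u^p\in L^1_{\mathrm{loc}}(\mathbb{R}^n)$.
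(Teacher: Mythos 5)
Your proposal is correct and follows essentially the same route as the paper: both start from the test function $\psi_\varepsilon$ of Lemma \ref{Aop-decay=01} with exponent just above $n$, pass to a weighted $L^1$ bound on $u$ via H\"older with exponents $p,p'$, feed that back into the tested equation with a smaller exponent $\tau<n$, and iterate. The only cosmetic difference is that you organize the iteration as an explicit affine contraction $\tau_0\mapsto n-2\sigma+\frac{\tau_0-n+\alpha}{p}$ converging to its fixed point $\tau^*=n-2\sigma+\frac{\alpha-2\sigma}{p-1}$, whereas the paper traces the first two steps by hand (splitting into sub-cases according to whether the current threshold is above or below $2\sigma$) and then invokes ``continuing this process''; the verifications you flag as the ``main obstacle'' ($\tau_0^{(1)}<n$ from $\alpha<2\sigma p$, positivity of the iterates in Case~(2), and dropping to arbitrarily small $\tau>0$ once the iterate would cross zero in Case~(1)) match exactly what the paper's case split handles.
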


\begin{remark}\label{Marencgej76} 
When $\alpha=0$, the growth estimate in \eqref{Glo-E002} has been proved by  Ao-Gonz\'{a}lez-Hyder-Wei \cite{AGHW} for $\gamma=n-2\sigma$.   Here we continue to use the bootstrap argument to obtain the sharp estimate \eqref{Glo-E002},  which is essential in showing the integral characterization of solutions. We also remark that such sharp growth estimate was obtained by Du-Yang \cite{DY} when $\alpha=0$, $p=\frac{n+2\sigma}{n-2\sigma}$ and  $\sigma$ is an integer by using a more direct argument (see also Proposition \ref{mInt=0hg01} below) that applies only to equations of integer order if $\sigma > 1$.      
\end{remark} 
 
\begin{proof}
For any $\tau>0$ we take $\psi\in C^\infty(\mathbb{R}^n)$ satisfying $\psi(x)=\frac{1}{|x|^{n+\tau}}$ on $B_1^c$. Let $\xi_\varepsilon$ be defined as in Lemma \ref{Aop-decay=01} and let $\psi_\varepsilon:=\psi\xi_\varepsilon$. Then, using Lemma \ref{Aop-decay=01}, dominated convergence theorem and monotone convergence theorem we obtain   
\begin{equation}\label{YopZ=001}
\int_{\mathbb{R}^n} \frac{|x|^{-\alpha} u(x)^p}{1 + |x|^{n+\tau}}  dx =\int_{\mathbb{R}^n} u(-\Delta)^\sigma \psi dx <\infty. 
\end{equation} 
For any $q > n +\frac{\alpha}{p}$ we write $q=q_1 + q_2$ with $q_1> \frac{n}{p}$ and $q_2 > \frac{n}{p^\prime} + \frac{\alpha}{p}$, where $p^\prime$ is the conjugate exponent of $p$.  By H\"{o}lder inequality and \eqref{YopZ=001} we have  
$$
\aligned
\int_{\mathbb{R}^n} \frac{u(x)}{1 + |x|^q} dx  &\leq C_1 + \int_{B_1^c} \frac{|x|^{-\alpha/p}u(x) }{|x|^{q_1}} \cdot \frac{|x|^{\alpha/p}}{|x|^{q_2}} dx \\
&\leq C_1 + \left( \int_{B_1^c} \frac{|x|^{-\alpha}u(x)^p }{|x|^{q_1p}} dx \right)^{1/p}\left( \int_{B_1^c} \frac{|x|^{\alpha p^\prime/p} }{|x|^{q_2 p^\prime}} dx \right)^{1/p^\prime} \\
& < \infty ~~~~~~ \textmd{for} ~ \textmd{every} ~ q > n+ \frac{\alpha}{p}.  
\endaligned
$$
Thus,  the integrability of $u$ lifts from $u \in \mathcal{L}_\sigma(\mathbb{R}^n)$ to $u \in \mathcal{L}_s(\mathbb{R}^n)$ for any $s > \frac{\alpha}{2p}$.  Next, we consider two cases for $p>1$ separately.   

{\it Case $1$:}  $p \leq \frac{-\alpha}{n-2\sigma}$. In this case, we have $n +\frac{\alpha}{p}\leq 2\sigma$.  By Lemma \ref{Aop-decay=01},  for any $\tau>0$ we can take $\psi \in C^\infty(\Rn)$ with $\psi(x)=\frac{1}{|x|^\tau}$ on $B_1^c$ as a test function to \eqref{0727Har=03}, and consequently we have 
$$
\int_{\Rn} \frac{|x|^{-\alpha} u(x)^p}{1 + |x|^\tau} dx < \infty~~~~~~ \textmd{for} ~ \textmd{every} ~ \tau > 0. 
$$

{\it Case $2$:} $p > \frac{-\alpha}{n-2\sigma}$. In this case we have $0< n + \frac{\alpha}{p} - 2\sigma <n$. By Lemma \ref{Aop-decay=01}, we use  $\psi \in C^\infty(\Rn)$ with $\psi(x)=\frac{1}{|x|^{n-2\sigma+\alpha/p +\tau}}$ on $B_1^c$ for small $\tau>0$ as a test function to \eqref{0727Har=03}, and thus we get 
$$
\int_{\Rn} \frac{|x|^{-\alpha} u(x)^p}{1 + |x|^q} dx \leq C \int_{\Rn} \frac{u(x)}{1 + |x|^{q+2\sigma}} dx< \infty~~~~~~ \textmd{for} ~ \textmd{every} ~q > n+\frac{\alpha}{p} - 2\sigma.   
$$
Again by H\"{o}lder inequality 
$$
\int_{\Rn} \frac{u(x)}{1 + |x|^q} dx < \infty~~~~~~ \textmd{for} ~ \textmd{every} ~q > n+\frac{\alpha}{p} + \frac{\alpha}{p^2} - \frac{2\sigma}{p}.  
$$
Obviously, this further lifts the integrability of $u$. As discussed before we are still divided into two cases. 

{\it Case $1^\prime$:} $p \leq \frac{-(\alpha-2\sigma) + \sqrt{(\alpha-2\sigma)^2 - 4\alpha(n-2\sigma)}}{2(n-2\sigma)}$. In this case we know $n+\frac{\alpha}{p} + \frac{\alpha}{p^2} - \frac{2\sigma}{p} \leq 2\sigma$. Using Lemma \ref{Aop-decay=01} and taking $\psi \in C^\infty(\Rn)$ with $\psi(x)=\frac{1}{|x|^\tau}$ on $B_1^c$ for $\tau>0$ as a test function yields 
$$
\int_{\Rn} \frac{|x|^{-\alpha} u(x)^p}{1 + |x|^\tau} dx < \infty~~~~~~ \textmd{for} ~ \textmd{every} ~ \tau > 0. 
$$

{\it Case $2^\prime$:} $p > \frac{-(\alpha-2\sigma) + \sqrt{(\alpha-2\sigma)^2 - 4\alpha(n-2\sigma)}}{2(n-2\sigma)}$. In this case we have $0< n+\frac{\alpha}{p} + \frac{\alpha}{p^2} - \frac{2\sigma}{p} - 2\sigma< n$.  Using Lemma \ref{Aop-decay=01} and taking $\psi \in C^\infty(\Rn)$ with $\psi(x)=\frac{1}{|x|^q}$ on $B_1^c$ for $q> n+\frac{\alpha}{p} + \frac{\alpha}{p^2} - \frac{2\sigma}{p} - 2\sigma$ as a test function yields 
$$
\int_{\Rn} \frac{|x|^{-\alpha} u(x)^p}{1 + |x|^q} dx  < \infty~~~~~~ \textmd{for} ~ \textmd{every} ~q > n+\frac{\alpha}{p} + \frac{\alpha}{p^2} - \frac{2\sigma}{p} - 2\sigma. 
$$
Again by H\"{o}lder inequality 
$$
\int_{\Rn} \frac{u(x)}{1 + |x|^q} dx < \infty~~~~~~ \textmd{for} ~ \textmd{every} ~q > n+ \left( \frac{\alpha}{p} + \frac{\alpha}{p^2} + \frac{\alpha}{p^3} \right) - \left( \frac{2\sigma}{p} + \frac{2\sigma}{p^2} \right).   
$$
This lifts the integrability of $u$ again.  

Continuing this process, after finitely many steps we will get the desired conclusion for any fix $p>1$ and fix $\gamma$ satisfying the assumption.    
The proof is completed. 
\end{proof}

When $\sigma \in (0, n/2)$ is an integer, the growth estimates of solutions to \eqref{0727Har=03} are much easier to obtain by the standard rescaling method.  More precisely, we have 

\begin{proposition}\label{mInt=0hg01}
Suppose that  $\sigma \in (0,  n/2)$ is an integer and $ \alpha \in (-\infty, 2\sigma)$.  Let $u\in L_{loc}^1(\Rn)$ be a nonnegative distributional solution of \eqref{0727Har=03} with  $|\cdot|^{-\alpha}u^p \in L_{\textmd{loc}}^1(\mathbb{R}^n)$  for some  $ p>1$.  Then
\be\label{Gammhy=01}
\int_{\Rn} \frac{u(x)}{(1 + |x|)^\gamma} dx < \infty ~~~~~~ \textmd{for} ~ \textmd{every} ~ \gamma > n+\frac{\alpha - 2\sigma}{p-1}  
\ee
and
\be\label{Gammhy=02}
\int_{\Rn} \frac{|x|^{-\alpha}u(x)^p}{(1 + |x|)^\gamma} dx < \infty ~~~~~~ \textmd{for} ~ \textmd{every} ~ \gamma > n -2\sigma +\frac{\alpha-2\sigma}{p-1}. 
\ee

\end{proposition}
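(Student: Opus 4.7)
The plan is to bypass the bootstrap argument of Proposition \ref{Plo-P02} by exploiting the locality of $(-\Delta)^\sigma$ for integer $\sigma$: a single Mitidieri--Pohozaev-style scaling test-function estimate, with the power of the cutoff redistributed by a weighted H\"older inequality, will yield the sharp polynomial growth of the weighted $p$-th moment $M(R):=\int_{\Rn}|x|^{-\alpha}u^p\Phi_R\,dx$ in one step. The two moment bounds \eqref{Gammhy=01} and \eqref{Gammhy=02} will then follow from a standard dyadic decomposition.

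Concretely, I would fix $\phi\in C_c^\infty(\Rn)$ with $0\le\phi\le 1$, $\phi\equiv 1$ on $\{1\le |x|\le 2\}$ and $\operatorname{supp}\phi\subset\{1/2\le |x|\le 4\}$, choose an integer $s\ge 2\sigma p/(p-1)$, and set $\Phi_R(x):=\phi(x/R)^s$ for $R\ge 1$. Since $\sigma\in\NN$, a chain-rule expansion of $\phi^s$ combined with $0\le\phi\le 1$ yields the pointwise bound
\[
|(-\Delta)^\sigma \Phi_R(x)|\le C R^{-2\sigma}\,\Phi_R(x)^{1-2\sigma/s}, \qquad x\in\Rn,
\]
with $\operatorname{supp}\Phi_R\subset A_R:=\{R/2\le |x|\le 4R\}$. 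Using $\Phi_R\in C_c^\infty(\Rn)$ as a test function in the distributional equation gives $M(R)\le C R^{-2\sigma}\int_{A_R}u\,\Phi_R^{1-2\sigma/s}\,dx$. I would then apply H\"older with exponents $p,p'$, splitting
\[
u\,\Phi_R^{1-2\sigma/s}=\bigl(|x|^{-\alpha/p}u\,\Phi_R^{1/p}\bigr)\bigl(|x|^{\alpha/p}\Phi_R^{1-2\sigma/s-1/p}\bigr);
\]
the choice of $s$ makes the residual exponent of $\Phi_R$ in the second factor nonnegative, so that factor is bounded by $1$, while $|x|\sim R$ on $A_R$ gives $\int_{A_R}|x|^{\alpha/(p-1)}\,dx\le C R^{n+\alpha/(p-1)}$ regardless of the sign of $n+\alpha/(p-1)$. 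Inserting these bounds collapses the inequality to $M(R)^{(p-1)/p}\le C R^{-2\sigma+(n(p-1)+\alpha)/p}$, and since $M(R)<\infty$ by local integrability of $|\cdot|^{-\alpha}u^p$ on the compact set $\operatorname{supp}\Phi_R\subset\Rn\setminus\{0\}$, I conclude $M(R)\le C R^{\rho}$ with $\rho=n-2\sigma+\frac{\alpha-2\sigma}{p-1}$.

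From the lower bound $M(R)\ge\int_{\{R\le |x|\le 2R\}}|x|^{-\alpha}u^p\,dx$, a dyadic decomposition of $\Rn$ together with finiteness of $\int_{B_1}|x|^{-\alpha}u^p\,dx$ immediately yields \eqref{Gammhy=02}. For \eqref{Gammhy=01}, one more H\"older application on each dyadic annulus $\{2^k\le |x|\le 2^{k+1}\}$ against the weight $|x|^{\alpha/(p-1)}$, combined with the annular bound just proved, gives $\int_{\{2^k\le |x|\le 2^{k+1}\}}u\,dx\le C(2^k)^{n+(\alpha-2\sigma)/(p-1)}$, and a dyadic sum finishes the proof. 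The main (and essentially only) obstacle is choosing $s$ large enough that H\"older distributes cleanly with a nonnegative residual exponent on $\Phi_R$; the test-function step closes in one shot precisely because the locality of integer-order $(-\Delta)^\sigma$ supplies the pointwise bound $|(-\Delta)^\sigma \Phi_R|\le CR^{-2\sigma}\Phi_R^{1-2\sigma/s}$, which has no counterpart in the non-integer case and is the reason why Proposition \ref{Plo-P02} had to proceed by iteration.
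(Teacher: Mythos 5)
Your proposal is correct and coincides, step for step, with the paper's own proof of Proposition \ref{mInt=0hg01}: you use the locality of integer-order $(-\Delta)^\sigma$ to get the pointwise bound $|(-\Delta)^\sigma(\phi^s)|\le C\phi^{s-2\sigma}$, test against $\phi(x/R)^s$ with $s\ge 2\sigma p/(p-1)$ so H\"older leaves a nonnegative residual exponent on the cutoff, divide by the finite quantity $M(R)^{1/p}$ to obtain $M(R)\lesssim R^{\,n-2\sigma+(\alpha-2\sigma)/(p-1)}$, and then conclude both \eqref{Gammhy=02} and (after one more weighted H\"older on dyadic annuli) \eqref{Gammhy=01} by summing a geometric series. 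The only cosmetic difference is that you take a cutoff supported on an annulus while the paper takes one equal to $1$ on a ball; since the paper's $(-\Delta)^\sigma\varphi_R^q$ is anyway supported in the transition annulus $B_{2R}\setminus B_R$, this changes nothing, and your explicit remark about why $M(R)<\infty$ (compact support away from the origin plus $|\cdot|^{-\alpha}u^p\in L^1_{loc}$) is a small clarification the paper leaves implicit.
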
 

\begin{proof}
Let $\varphi \in C_c^\infty(\Rn)$ be a nonnegative function satisfying $\varphi=1$ on $B_1$ and $\varphi=0$ on $B_2^c$. For $R >0$, set $\varphi_R(x):=\varphi\left( \frac{x}{R} \right)$. Fix an integer $q > \frac{2\sigma p}{p-1}$ and choose $\varphi_R^q$ as a test function in \eqref{0727Har=03}.  Note that here $\sigma \in (0, n/2)$ is an integer, we have $|(-\Delta)^\sigma \varphi_R^q(x)| \leq C R^{-2\sigma} \varphi_R(x)^{q-2\sigma}$ for some constant $C>0$.    By H\"{o}lder inequality we obtain 
$$
\aligned
\int_{\Rn} |x|^{-\alpha}u^p \varphi_R^q dx  & =\int_{\Rn} u (-\Delta)^\sigma \varphi_R^q dx \leq \frac{C}{R^{2\sigma}} \int_{B_{2R} \backslash B_R} u \varphi_R^{q-2\sigma} dx \\
& = \frac{C}{R^{2\sigma}} \int_{B_{2R} \backslash B_R} |x|^{-\alpha/p}u\varphi_R^{q/p}  \cdot |x|^{\alpha/p} \varphi_R^{q-q/p -2\sigma} dx \\
&\leq \frac{C}{R^{2\sigma}} \left( \int_{B_{2R} \backslash B_R} |x|^{-\alpha}u^p \varphi_R^{q} dx \right)^{1/p}  \left( \int_{B_{2R} \backslash B_R} |x|^{\alpha p^\prime/p} dx \right)^{1/p^\prime}\\
& \leq C R^{\frac{\alpha}{p} + \frac{n}{p^\prime} -2\sigma} \left( \int_{B_{2R} \backslash B_R} |x|^{-\alpha}u^p \varphi_R^{q} dx \right)^{1/p}. 
\endaligned
$$
This gives that 
\be\label{hxnd7g72=06}
\int_{B_R} |x|^{-\alpha}u^p  dx \leq C R^{n +\frac{\alpha p^\prime}{p} -2\sigma p^\prime}=C R^{n -2\sigma +\frac{\alpha-2\sigma}{p-1}} ~~~~~~ \textmd{for} ~ \textmd{any} ~ R>0. 
\ee
{\it Here we also point out that the above two estimates imply $u \equiv 0$ if $1< p \leq \frac{n-\alpha}{n-2\sigma}$}. Using H\"{o}lder inequality again, we obtain, for any $\gamma \in \mathbb{R}$ and $R \geq 1$,  that 
$$
\aligned
\int_{B_{2R} \backslash B_R} \frac{u(x)}{(1 + |x|)^\gamma} dx & \leq \left(  \int_{B_{2R}\backslash B_R} |x|^{-\alpha}u^p dx \right)^{1/p} \left(  \int_{B_{2R}\backslash B_R} \frac{|x|^{\alpha p^\prime /p}}{(1 + |x|)^{\gamma p\prime}} dx \right)^{1/p^\prime} \\
& \leq C R^{n+\frac{\alpha-2\sigma}{p-1} -\gamma}. 
\endaligned
$$
Hence, when $\gamma> n+\frac{\alpha - 2\sigma}{p-1}$, a dyadic sum argument leads to that \eqref{Gammhy=01} holds. 

On the other hand, estimate \eqref{hxnd7g72=06} also implies 
$$
\aligned
\int_{\Rn}\frac{|x|^{-\alpha}u(x)^p}{(1 + |x|)^\gamma} dx & =\int_{B_1}\frac{|x|^{-\alpha}u(x)^p}{(1 + |x|)^\gamma} dx + \sum_{i=0}^\infty \int_{B_{2^{i+1}} \backslash B_{2^i}}  \frac{|x|^{-\alpha}u(x)^p}{(1 + |x|)^\gamma} dx \\
& \leq C_1 + C \sum_{i=0}^\infty (2^i)^{n -2\sigma +\frac{\alpha-2\sigma}{p-1} -\gamma} < \infty
\endaligned
$$
if $\gamma > n -2\sigma +\frac{\alpha-2\sigma}{p-1}$. This proves \eqref{Gammhy=02}.  
\end{proof} 

Suppose that  $0< \sigma  < n/2$ and $-\infty < \alpha < 2\sigma$.  Let $u \in \mathcal{L}_\sigma(\mathbb{R}^n)  \cap C(\mathbb{R}^n \backslash \{0\})$ be a nonnegative distributional solution of \eqref{Har=01} with $p \geq  \frac{n-\alpha}{n-2\sigma}$.  Then,  by Propositions \ref{Glo-P01} and \ref{Plo-P02},  the following function
\begin{equation}\label{FS098}
v(x):=C_{n,\sigma} \int_{\mathbb{R}^n} \frac{|y|^{-\alpha}u(y)^{p}}{|x - y|^{n-2\sigma}} dy
\end{equation}
is well-defined for every $x\in \mathbb{R}^n \backslash \{0\}$ and continuous on $\mathbb{R}^n \backslash \{0\}$, where  $C_{n,\sigma}$ is a positive constant such that $C_{n,\sigma} |\cdot|^{2\sigma-n}$ is the fundamental solution of the fractional Laplacian.     For any $R>0$, we can write $v$ as $v=v_{1,R} + v_{2,R}$ with
$$
v_{1,R} (x) = \int_{B_{2R}} \frac{|y|^{-\alpha}u(y)^{p}}{|x - y|^{n-2\sigma}} dy ~~~~ \textmd{and}   ~~~~
v_{2,R} (x) = \int_{B_{2R}^c} \frac{|y|^{-\alpha}u(y)^{p}}{|x - y|^{n-2\sigma}}  dy. 
$$
From  Proposition \ref{Glo-P01} we know that $|\cdot|^{-\alpha}u^{p} \in L^1(B_{2R})$ and hence $v_{1,R} \in L^1(B_R)$.   By Proposition \ref{Plo-P02} we easily get $v_{2,R} \in L^\infty(B_R)$.  Thus $v\in L_{loc}^1(\mathbb{R}^n)$.   Moreover, we also have the following growth estimate for $v$
 near infinity. 
\begin{proposition}\label{VVV}
Suppose that  $0< \sigma  < n/2$ and $-\infty < \alpha < 2\sigma$.  

\begin{itemize}

\item [$(1)$] Let $u \in \mathcal{L}_\sigma(\mathbb{R}^n)\cap C(\mathbb{R}^n \backslash \{0\})$ be a nonnegative distributional solution of \eqref{Har=01} with $ p \geq  \frac{n-\alpha}{n-2\sigma}$,  and  let $v$ be defined by \eqref{FS098}.  Then we have $v\in \mathcal{L}_s (\mathbb{R}^n)$ for any $s > \frac{\alpha-2\sigma}{2(p-1)}$.  

\item [$(2)$]  Let $u \in \mathcal{L}_\sigma(\mathbb{R}^n)\cap C(\mathbb{R}^n)$ be a nonnegative distributional solution of \eqref{Har=01} in $\mathbb{R}^n$ with $1 < p < \frac{n-\alpha}{n-2\sigma}$,   and  let $v$ be defined by \eqref{FS098}.  Then we have $v\in \mathcal{L}_s (\mathbb{R}^n)$ for any $s > \frac{2\sigma - n}{2}$.  

\end{itemize}

\end{proposition}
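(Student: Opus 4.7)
The plan is to swap the order of integration and reduce everything to a pointwise estimate on the Riesz-type kernel convolved against the weight $(1+|x|)^{-(n+2s)}$, and then invoke Proposition \ref{Plo-P02} to absorb the result against $|y|^{-\alpha}u(y)^p$. By Fubini's theorem,
\begin{equation*}
\int_{\mathbb{R}^n} \frac{v(x)}{(1+|x|)^{n+2s}} dx = C_{n,\sigma} \int_{\mathbb{R}^n} |y|^{-\alpha} u(y)^p\, I(y)\, dy, \qquad I(y) := \int_{\mathbb{R}^n} \frac{dx}{|x-y|^{n-2\sigma}(1+|x|)^{n+2s}},
\end{equation*}
so the whole task reduces to showing that $I(y)$ decays fast enough in $|y|$ to be absorbed by the already available moment bounds on $|y|^{-\alpha}u(y)^p$.

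The key step is to prove the pointwise estimate
\begin{equation*}
I(y) \leq C(1+|y|)^{-\gamma_0}, \qquad \gamma_0 := n - 2\sigma + 2\min(s,0),
\end{equation*}
valid whenever $s > \sigma - n/2$, a condition forced by the hypotheses in both (1) and (2). For $|y| \leq 1$, the integral $I(y)$ is uniformly bounded because $n - 2\sigma < n$ gives local integrability near $y$ and the condition $n + 2s > 2\sigma$ handles the tail at infinity. For $|y| > 1$, I would split $\mathbb{R}^n$ into the regions $\{|x| \leq |y|/2\}$, $\{|x-y| \leq |y|/2\}$, $\{|y|/2 \leq |x| \leq 2|y|,\ |x-y| \geq |y|/2\}$, and $\{|x| \geq 2|y|\}$, and estimate each piece by a routine calculation. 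When $s \geq 0$ the near-origin region dominates and gives $|y|^{-(n-2\sigma)}$, while when $\sigma - n/2 < s < 0$ all four regions scale like $|y|^{-(n-2\sigma+2s)}$, yielding the claimed bound in each case.

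With this estimate, part (1) follows by combining $\gamma_0 = n - 2\sigma + 2\min(s,0)$ with Proposition \ref{Plo-P02}(2): since $p \geq (n-\alpha)/(n-2\sigma)$, that proposition supplies $\int |y|^{-\alpha} u(y)^p (1+|y|)^{-\gamma}\, dy < \infty$ for every $\gamma > n - 2\sigma + (\alpha-2\sigma)/(p-1)$, and a short arithmetic check shows that the hypothesis $s > (\alpha - 2\sigma)/(2(p-1))$ is precisely what makes $\gamma_0$ strictly exceed this threshold (using $(\alpha - 2\sigma)/(p-1) < 0$). The contribution of $|y| \leq 1$ is handled by Proposition \ref{Glo-P01}, which guarantees $|\cdot|^{-\alpha} u^p \in L^1_{\mathrm{loc}}(\mathbb{R}^n)$, combined with the uniform bound on $I$ there. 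For part (2), the range $1 < p < (n-\alpha)/(n-2\sigma)$ lets Proposition \ref{Plo-P02}(1) furnish convergence for every $\gamma > 0$; the weaker requirement $s > (2\sigma - n)/2$ then suffices because it is exactly what makes $\gamma_0 > 0$, and local integrability near the origin is automatic from continuity of $u$ at $0$ together with $\alpha < 2\sigma < n$.

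The main technical obstacle is the sharp tracking of $I(y)$ in the regime $s < 0$; there the weight $(1+|x|)^{-(n+2s)}$ fails to decay at infinity, the naive bound $I(y) \lesssim (1+|y|)^{-(n-2\sigma)}$ breaks down, and the four-region decomposition must be carried out carefully to produce exactly the exponent $n - 2\sigma + 2s$. This precision is what ultimately pins down the critical threshold $s > (\alpha-2\sigma)/(2(p-1))$ in (1) rather than the merely sufficient $s > \sigma - n/2$.
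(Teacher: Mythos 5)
Your proposal is correct and follows essentially the same route as the paper: apply Fubini to reduce the claim to a pointwise decay estimate for the Riesz-type integral $I(y) = \int_{\mathbb{R}^n} |x-y|^{-(n-2\sigma)}(1+|x|)^{-(n+2s)}\,dx$, and then absorb that decay against the moment bounds already established in Proposition \ref{Plo-P02}. The paper achieves the kernel bound via a three-region split $\{|x|\leq|y|/2\}$, $\{|y|/2<|x|<2|y|\}$, $\{|x|\geq2|y|\}$ (writing $q=n+2s$ and distinguishing $q<n$, $q=n$, $q>n$, with a logarithmic factor in the borderline case); your four-region version merely separates the ball $\{|x-y|\leq|y|/2\}$ out of the middle annulus, and yields the same exponent $\gamma_0=n-2\sigma+2\min(s,0)$ modulo the harmless logarithm at $s=0$ (which, as you note implicitly, is irrelevant because $\mathcal{L}_{s'}\subset\mathcal{L}_s$ for $s'<s$, so one can always perturb $s$ slightly). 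Your arithmetic verifying that $\gamma_0$ clears the threshold $n-2\sigma+\tfrac{\alpha-2\sigma}{p-1}$ from Proposition \ref{Plo-P02}(2) precisely when $s>\tfrac{\alpha-2\sigma}{2(p-1)}$ in case (1), and that $\gamma_0>0$ precisely when $s>\tfrac{2\sigma-n}{2}$ in case (2), is correct and matches the paper's intent.
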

\begin{proof}
By  Fubini's theorem, for any $q > 2\sigma$, we have 
\begin{equation}\label{Fubi}
\int_{\mathbb{R}^n} \frac{v(x)}{(1+|x|)^{q}} dx =c_{n,\sigma} \int_{\mathbb{R}^n} |y|^{-\alpha} u(y)^{p} \left( \int_{\mathbb{R}^n} \frac{1}{|x-y|^{n-2\sigma}} \frac{1}{(1+|x|)^q} dx\right) dy. 
\end{equation} 
If $|y| \leq 1$, then
$$
\aligned
\int_{\mathbb{R}^n} \frac{1}{|x-y|^{n-2\sigma}} \frac{1}{(1+|x|)^q} dx & \leq C \left( \int_{B_3} \frac{1}{|x|^{n-2\sigma}}dx +  \int_{B_2^c} \frac{1}{|x|^{n-2\sigma+q}}  dx  \right) \\ 
& \leq C <\infty.
\endaligned 
$$
If $|y| >1$, then
$$
\int_{\mathbb{R}^n} \frac{1}{|x-y|^{n-2\sigma}} \frac{1}{(1+|x|)^q} dx  =: \sum_{i=1}^3 I_i,
$$
where
$$
I_1=\int_{ \{|x|\leq \frac{|y|}{2}\} }    \frac{1}{|x-y|^{n-2\sigma}} \frac{1}{(1+|x|)^q}  dx \leq  C 
\begin{cases} 
|y|^{2\sigma - q},  ~~~ & \textmd{if} ~ q < n, \\
\log (1 + |y|)|y|^{2\sigma-n}, ~~~& \textmd{if} ~ q = n,\\
|y|^{2\sigma - n}, ~~~& \textmd{if} ~ q > n, 
\end{cases}
$$
$$
\aligned
I_2 & = \int_{\{ \frac{|y|}{2} < |x| < 2|y| \} }  \frac{1}{|x-y|^{n-2\sigma}} \frac{1}{(1+|x|)^q} dx  \leq \frac{C}{|y|^{q}}\int_{\{ \frac{|y|}{2} < |x| < 2|y| \}}  \frac{1}{|x-y|^{n-2\sigma}} dx \\ 
& \leq C |y|^{2\sigma -q}
\endaligned
$$
and
$$
I_3=\int_{\{ |x|\geq  2|y| \}} \frac{1}{|x-y|^{n-2\sigma}} \frac{1}{(1+|x|)^q} dx \leq C  \int_{\{ |x|\geq  2|y| \}} \frac{1}{|x|^{n-2\sigma+q}} dx \leq  C |y|^{2\sigma-q}.  
$$

Let $u \in \mathcal{L}_\sigma(\mathbb{R}^n)\cap C(\mathbb{R}^n \backslash \{0\})$ be a nonnegative distributional solution of \eqref{Har=01} with $ p \geq  \frac{n-\alpha}{n-2\sigma}$.  Then  for $q > n + \frac{\alpha-2\sigma}{p-1}$ (note that $n + \frac{\alpha-2\sigma}{p-1} \geq 2\sigma$ due to $p\geq \frac{n - \alpha}{n-2\sigma}$),  by  \eqref{Fubi} and \eqref{Glo-E002}  we get 
$$
\int_{\mathbb{R}^n} \frac{v(x)}{(1+|x|)^{q}} dx \leq C \int_{B_1} |y|^{-\alpha}u(y)^{p} dy +C \int_{B_1^c}   \frac{\log (1 + |y|) |y|^{-\alpha}u(y)^{p}}{|y|^{q-2\sigma}} dy  <\infty. 
$$
That is,  $v \in \mathcal{L}_s(\mathbb{R}^n)$ for any $s > \frac{\alpha-2\sigma}{2(p-1)}$.   

Similarly,  if $u \in \mathcal{L}_\sigma(\mathbb{R}^n)\cap C(\mathbb{R}^n)$ is  a nonnegative distributional solution of \eqref{Har=01} in $\mathbb{R}^n$ with $1 < p < \frac{n-\alpha}{n-2\sigma}$, then by using \eqref{Glo-E002gyd} we get that $v\in \mathcal{L}_s (\mathbb{R}^n)$ for any $s > \frac{2\sigma - n}{2}$.   
\end{proof}

Based on Propositions \ref{Glo-P01}, \ref{Plo-P02} and  \ref{VVV},  we now show the integral representation of solutions to the higher order fractional equation \eqref{Har=01} stated in Theorem \ref{THM=02}.    Our proof is inspired by that of \cite[Theorem 1.8]{AGHW}, where the superharmonicity property for fractional Laplacian equations in $\mathbb{R}^n$ was obtained.   Similar idea has also been used in \cite{DY} for  the case when $\alpha=0$, $p=\frac{n+2\sigma}{n-2\sigma}$ and $\sigma \in (0, n/2)$ is an integer.  

\vskip0.10in

\noindent{\it Proof of Theorem \ref{THM=02} }.  Define $v$ as in \eqref{FS098}. Then, by Proposition \ref{VVV} we know that  $v\in \mathcal{L}_s (\mathbb{R}^n)$ for any $s > \frac{\alpha-2\sigma}{2(p-1)}$ if $p \geq  \frac{n-\alpha}{n-2\sigma}$ and $v\in \mathcal{L}_s (\mathbb{R}^n)$ for any $s > \frac{2\sigma - n}{2}$ if $1 < p <\frac{n-\alpha}{n-2\sigma}$.  In particular,  $v \in \mathcal{L}_0 (\mathbb{R}^n)$ for any case.   Moreover,  for any $\varphi \in C_c^\infty(\R^n)$ we have 
$$
\aligned
\int_{\R^n} v(x) (- \Delta)^\sigma \varphi(x) dx & = \int_{\R^n} \bigg( C_{n, \sigma} \int_{\R^n} \frac{|y|^{-\alpha}u(y)^p}{|x - y|^{n - 2 \sigma}} dy \bigg) (- \Delta)^\sigma \varphi(x) dx \\
& = \int_{\R^n} |y|^{-\alpha}u(y)^p  \bigg( C_{n, \sigma} \int_{\R^n} \frac{(- \Delta)^\sigma \varphi(x)}{|x - y|^{n - 2\sigma}} dx \bigg) dy \\
& = \int_{\R^n} |y|^{-\alpha}u(y)^p  \varphi(y) dy, 
\endaligned
$$
where the Fubini's theorem was used in the second equality.  Thus $v$ is a nonnegative distributional solution to 
\begin{equation}\label{Dis-V0}
(-\Delta)^\sigma v =|x|^{-\alpha} u^{p} ~~~~~~ \textmd{in} ~ \mathbb{R}^n. 
\end{equation} 
Let $w:=u-v$. Then,  by Proposition \ref{Glo-P01} we have  that $(-\Delta)^\sigma w =0$ on $\mathbb{R}^n$ in the distributional sense.  On the other hand, from Propositions \ref{Plo-P02} and  \ref{VVV} we obtain  that both $u$ and $v$ are in the space $\mathcal{L}_0(\mathbb{R}^n)$ and hence $w \in \mathcal{L}_0(\mathbb{R}^n)$.  It follows from a Liouville-type theorem (see, e.g.,  \cite{AGHW}) that $w \equiv 0$ on $\mathbb{R}^n$.  Thus, we have 
$$
u(x)=C_{n,\sigma} \int_{\mathbb{R}^n} \frac{|y|^{-\alpha}u(y)^{p}}{|x-y|^{n-2\sigma}} dy~~~~~ \textmd{for} ~  x \in \mathbb{R}^n \backslash \{0\}.  
$$
When $u \in C(\mathbb{R}^n)$, the above holds for all $x\in \mathbb{R}^n$.   Theorem \ref{THM=02} is established.     
\hfill$\square$

\section{Liouville-type theorems}\label{S2=0hy}
In this section, we first prove a Liouville-type theorem for nonnegative solutions of the following integral equation  
\begin{equation}\label{IntoLiou=001}
u(x)= \int_{\mathbb{R}^n} \frac{u(y)^{p}}{|y|^{\alpha} |x-y|^{n-2\sigma}} dy, ~~~~~~ x \in \mathbb{R}^n. 
\end{equation} 

\begin{theorem}\label{IntLiou=02App01}
Let $0< \sigma  < n/2$ and $-\infty <  \alpha < 2\sigma$.  Suppose that $u \in C(\mathbb{R}^n)$ is a nonnegative solution of \eqref{IntoLiou=001} with  $0 < p < \frac{n+2\sigma -2\alpha}{n-2\sigma}$.  Then $u \equiv 0$ in $\mathbb{R}^n$. 
\end{theorem}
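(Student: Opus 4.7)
The plan is to apply the method of moving spheres centered at the origin to the integral equation \eqref{IntoLiou=001}, exploiting the rotational invariance of the Hardy weight $|y|^{-\alpha}$ about $0$. Assume for contradiction that $u\not\equiv 0$; then continuity and \eqref{IntoLiou=001} force $u>0$ on $\R^n$ and $u(x)\gtrsim(1+|x|)^{-(n-2\sigma)}$ at infinity (by restricting the integral defining $u(x)$ to a bounded set on which $u$ is bounded below). For $\lambda>0$ set
\[
u_\lambda(x):=\Bigl(\frac{\lambda}{|x|}\Bigr)^{n-2\sigma} u\!\Bigl(\frac{\lambda^{2}x}{|x|^{2}}\Bigr),\qquad x\neq 0.
\]
A change of variables $w\mapsto w^{*}:=\lambda^{2}w/|w|^{2}$ in \eqref{IntoLiou=001}, combined with the conformal identity $|x^{*}-w^{*}|\,|x|\,|w|=\lambda^{2}|x-w|$, produces the transformed integral equation
\[
u_\lambda(x)=\lambda^{\tau-\alpha}\int_{\R^n}\frac{u_\lambda(w)^{p}}{|w|^{\tau}\,|x-w|^{n-2\sigma}}\,dw,\qquad \tau:=n+2\sigma-\alpha-p(n-2\sigma).
\]
The crucial algebraic observation is that $\tau=\alpha$ exactly when $p=p^{*}_{\sigma,\alpha}$, while in the subcritical regime $p<p^{*}_{\sigma,\alpha}$ one has $\tau>\alpha$, so the transformed weight is strictly more singular at the origin than $|w|^{-\alpha}$.

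Next, I would run the comparison on $\Sigma_\lambda:=\R^{n}\setminus\overline{B_\lambda}$. Subtracting the two integral identities and reorganizing the $B_\lambda$ portion via $w\mapsto w^{*}$ yields, for $x\in\Sigma_\lambda$,
\[
u(x)-u_\lambda(x)=\int_{\Sigma_\lambda}K_\lambda(x,w)\Bigl[\frac{u(w)^{p}}{|w|^{\alpha}}-\lambda^{\tau-\alpha}\frac{u_\lambda(w)^{p}}{|w|^{\tau}}\Bigr]\,dw,
\]
where
\[
K_\lambda(x,w)=\frac{1}{|x-w|^{n-2\sigma}}-\frac{\lambda^{n-2\sigma}}{|x|^{n-2\sigma}\,|x^{*}-w|^{n-2\sigma}}
\]
is strictly positive on $\Sigma_\lambda\times\Sigma_\lambda$, as is transparent from the identity $|w|^{2}|x-w^{*}|^{2}-\lambda^{2}|x-w|^{2}=(|x|^{2}-\lambda^{2})(|w|^{2}-\lambda^{2})$. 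Splitting the bracket as $|w|^{-\alpha}(u^{p}-u_\lambda^{p})+u_\lambda^{p}(|w|^{-\alpha}-\lambda^{\tau-\alpha}|w|^{-\tau})$, the ``weight-defect'' $\mathcal{R}_\lambda(x):=\int_{\Sigma_\lambda}K_\lambda(x,w)\,u_\lambda(w)^{p}\bigl(|w|^{-\alpha}-\lambda^{\tau-\alpha}|w|^{-\tau}\bigr)\,dw$ is non-negative on $\Sigma_\lambda$ (since $|w|^{\tau-\alpha}\geq\lambda^{\tau-\alpha}$ when $|w|\geq\lambda$ and $\tau>\alpha$), and strictly positive unless $u_\lambda\equiv 0$.

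I would then slide the sphere. On $E_\lambda:=\{x\in\Sigma_\lambda:u_\lambda(x)>u(x)\}$, bounding $u_\lambda^{p}-u^{p}$ via the mean value theorem for $p\geq 1$, or via $u_\lambda^{p}-u^{p}\leq p\,u^{p-1}(u_\lambda-u)_{+}$ when $0<p\leq 1$, and applying the weighted Hardy--Littlewood--Sobolev (Stein--Weiss) inequality gives
\[
\|(u_\lambda-u)_{+}\|_{L^{q}(\Sigma_\lambda)}\leq C\bigl\|\,|\cdot|^{-\alpha}(u+u_\lambda)^{p-1}\bigr\|_{L^{n/(2\sigma)}(E_\lambda)}\,\|(u_\lambda-u)_{+}\|_{L^{q}(\Sigma_\lambda)}.
\]
The decay of $u$ at infinity combined with the Kelvin scaling of $u_\lambda$ makes the multiplicative constant strictly less than $1$ for $\lambda$ sufficiently small, forcing $E_\lambda=\emptyset$, i.e., $u\geq u_\lambda$ on $\Sigma_\lambda$. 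Setting $\lambda_{*}:=\sup\{\lambda_{0}>0:u\geq u_\mu\text{ on }\Sigma_\mu\text{ for every }0<\mu\leq\lambda_{0}\}$, two possibilities arise. If $\lambda_{*}=\infty$ then $u\geq u_\lambda$ on $\Sigma_\lambda$ for every $\lambda>0$, and combining this with an integral Pohozaev-type identity (in the spirit of Chen--Li) forces $u\equiv 0$. If $\lambda_{*}<\infty$, a strong-maximum-principle-in-integral-form argument together with the strict positivity of $\mathcal{R}_{\lambda_{*}}$ in the subcritical regime forces $u\equiv u_{\lambda_{*}}$ on $\R^{n}\setminus\{0\}$, and then the comparison identity collapses to $\mathcal{R}_{\lambda_{*}}\equiv 0$, contradicting its strict positivity. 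Either way $u\equiv 0$, contradicting the standing assumption.

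The principal obstacle I expect is the interplay between the strictly positive weight-defect $\mathcal{R}_\lambda$ --- which is exactly where the subcritical hypothesis $p<p^{*}_{\sigma,\alpha}$ truly enters --- and the Stein--Weiss inequality applied with exponents that are compatible both with the decay of $u$ at infinity and with the weight $|w|^{-\alpha}$, so that the contractive coefficient in the sliding step drops strictly below $1$. A secondary technical subtlety is formulating the strong maximum principle in integral form finely enough to exclude non-trivial equilibria at $\lambda_{*}<\infty$, and producing the integral Pohozaev identity that rules out the ``moving spheres all the way'' scenario $\lambda_{*}=\infty$; both rely on the sharp decay estimate $u(x)\gtrsim(1+|x|)^{-(n-2\sigma)}$ established at the outset.
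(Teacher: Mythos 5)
Your framework agrees with the paper's: reflect across spheres centered at the origin, use the conformal identity to rewrite \eqref{IntoLiou=001} for $u_\lambda$, and exploit that the exponent $n+2\sigma-2\alpha-p(n-2\sigma)$ is strictly positive in the subcritical regime (you package this as $\tau>\alpha$, the paper as the factor $(\lambda/|z|)^{n+2\sigma-2\alpha-p(n-2\sigma)}<1$ in \eqref{IntABC-0hy2}). The starting lemma via HLS in a small ball and the extension past a supposed finite critical radius also match Lemmas \ref{S3oLeM=001}--\ref{S3InLem02}; a minor note is that your description of the $\lambda_*<\infty$ case is phrased confusingly (the strict positivity of the weight-defect $\mathcal{R}_{\lambda_*}$ is precisely what \emph{prevents} the touching $u\equiv u_{\lambda_*}$ and is what the paper uses, via the gap estimate $u-u_{\bar\lambda}\gtrsim|y|^{2\sigma-n}$, to move slightly past $\bar\lambda$ and reach a contradiction). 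So the logical content of your $\lambda_*<\infty$ step is the right idea, just garbled in wording.

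The genuine gap is in the $\lambda_*=\infty$ case. You invoke ``an integral Pohozaev-type identity (in the spirit of Chen--Li)'' without specifying it or explaining how it yields the contradiction for $p<p^*_{\sigma,\alpha}$ over the full range of $\alpha\in(-\infty,2\sigma)$, and this is precisely where the nontrivial work lies. The paper instead proceeds entirely by pointwise estimates: once $u(x)\geq(\lambda/|x|)^{n-2\sigma}u(\lambda^2x/|x|^2)$ holds for all $\lambda>0$ and $|x|\geq\lambda$, choosing $\lambda=\sqrt{|x|}$ gives $u(x)\geq C_0|x|^{-(n-2\sigma)/2}$ on $|x|\geq1$, and then a bootstrap through \eqref{IntoLiou=001} produces a strictly decreasing sequence of exponents $\mu_k=p\mu_{k-1}+\alpha-2\sigma$ with $u(x)\geq C_k|x|^{-\mu_k}$; the subcriticality ensures $\mu_k$ either tends to $-\infty$ (if $p\geq1$) or to $(\alpha-2\sigma)/(1-p)$ (if $p<1$), and in both cases the integral $\int_{|y|\geq1}u(y)^p|y|^{2\sigma-n-\alpha}\,dy=u(0)$ diverges. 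Your preliminary lower bound $u(x)\gtrsim(1+|x|)^{-(n-2\sigma)}$ is too weak to launch this iteration --- the sharper $|x|^{-(n-2\sigma)/2}$ coming from moving the sphere all the way is essential. As written, your proof of the theorem is incomplete unless you can supply the missing Pohozaev-type argument; absent that, you should replace that step by the bootstrap iteration.
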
  

\br\label{LioutRem=001}
Such Liouville-type theorem for the integral equation \eqref{IntoLiou=001} has been proved by Li \cite{Li04} for $\alpha=0$,   by Dai-Qin \cite{DQ} for  $\sigma =m \in (1, n/2)$ being an  integer and $-\infty <  \alpha < 2m$, and by Cao-Dai-Qin\cite{CDQ} for non-integral $\sigma \in (1, n/2)$ and $-\infty <  \alpha \leq 0$.  Here we will prove Theorem \ref{IntLiou=02App01} by using the method of moving spheres in Li, Zhang and Zhu \cite{Li04,LZ03,Li-Zhu} and a ``Bootstrap" iteration argument in Dai-Qin \cite{DQ}.   
\er

For any $\lambda >0$ and nonnegative  function $u$,  we define the Kelvin transform of $u$ with respect to the ball $B_\lambda(0)$ by 
\begin{equation}\label{IntN0Ta-0022}
u_{\lambda}(\xi) = \left( \frac{\lambda}{|\xi|} \right)^{n-2\sigma} u(\xi^{\lambda}) ~~~~~~ \textmd{for}  ~ \xi\neq 0, 
\end{equation}
where $\xi^{\lambda}:= \frac{\lambda^2  \xi }{|\xi |^2}$.  It is easy to check that $(\xi^{\lambda})^{\lambda} =\xi$ and $(u_{\lambda})_{\lambda} \equiv u$.  By making a change of variables $y=z^{\lambda} = \frac{\lambda^2 z }{|z|^2}$,  we have 
$$
\aligned
\int_{|y| \geq \lambda} \frac{|y|^{-\alpha} u(y)^{p}}{|\xi^{\lambda} - y|^{n-2\sigma}} dy & = \int_{|z| \leq \lambda} \frac{|z^{ \lambda}|^{-\alpha} u(z^{ \lambda})^{p}}{|\xi^{\lambda} - z^{\lambda}|^{n-2\sigma}} \left( \frac{\lambda}{|z|} \right)^{2n} dz \\
& = \left( \frac{\lambda}{|\xi |} \right)^{-(n-2\sigma)} \int_{|z |\leq \lambda} \frac{|z|^{-\alpha} u_{\lambda}(z)^p}{|\xi - z|^{n-2\sigma}} \left( \frac{\lambda}{|z|} \right)^{n+2\sigma-2\alpha  - p(n-2\sigma)} dz, 
\endaligned
$$
where we have used the fact  that $\frac{|z|}{\lambda} \frac{|\xi |}{\lambda} |\xi^{\lambda} - z^{\lambda}| =  |\xi - z|$. Hence,  we obtain 
\begin{equation}\label{IntId-01}
\left( \frac{\lambda}{|\xi |} \right)^{n-2\sigma} \int_{|y |\geq \lambda}  \frac{|y|^{-\alpha} u(y)^{p}}{|\xi^{\lambda} - y|^{n-2\sigma}} dy  = \int_{|z |\leq \lambda} \frac{|z|^{-\alpha} u_{\lambda}(z)^p}{|\xi - z|^{n-2\sigma}} \left( \frac{\lambda}{|z|} \right)^{n+2\sigma -2\alpha - p(n-2\sigma)} dz. 
\end{equation}
Similarly, we also have 
\begin{equation}\label{IntId-02}
\left( \frac{\lambda}{|\xi |} \right)^{n-2\sigma} \int_{|y | \leq \lambda}  \frac{|y|^{-\alpha} u(y)^{p}}{|\xi^{\lambda} - y|^{n-2\sigma}} dy  = \int_{|z |\geq \lambda} \frac{|z|^{-\alpha} u_{\lambda}(z)^p}{|\xi - z|^{n-2\sigma}} \left( \frac{\lambda}{|z|} \right)^{n+2\sigma -2\alpha  - p(n-2\sigma)} dz.  \end{equation} 
Thus, if $u \in C(\mathbb{R}^n )$ is a nonnegative solution of \eqref{IntoLiou=001}, then  by \eqref{IntId-01} and \eqref{IntId-02} we get 
\begin{equation}\label{IntABC-01}
u_{\lambda}(\xi) = \int_{\mathbb{R}^n} \frac{|z|^{-\alpha} u_{\lambda}(z)^p}{|\xi - z|^{n-2\sigma}} \left( \frac{\lambda}{|z|} \right)^{n+2\sigma -2\alpha - p(n-2\sigma)} dz 
\end{equation}
and
\begin{equation}\label{IntABC-0hy2}
u(\xi) -u_{\lambda}(\xi) =\int_{|z|\geq \lambda} K(0, \lambda; \xi, z) \left[  \frac{u(z)^p}{|z|^{\alpha}}  - \left( \frac{\lambda}{|z|} \right)^{n+2\sigma -2\alpha  - p(n-2\sigma)} \frac{u_{\lambda}(z)^p}{|z|^{\alpha}}  \right] dz,   
\end{equation} 
where
$$
K(0, \lambda; \xi, z)=\frac{1}{|\xi -z|^{n-2\sigma}} - \left(\frac{\lambda}{|\xi |}  \right)^{n-2\sigma} \frac{1}{|\xi^{\lambda} -z|^{n-2\sigma}}. 
$$
It is elementary to check that   
$$
K(0, \lambda; \xi, z) >0~~~~~ \textmd{for} ~ \textmd{all} ~ |\xi|, |z| >\lambda>0.  
$$

To apply the moving sphere method, we first prove the following result which provides us a starting point.       

\begin{lemma}\label{S3oLeM=001}
Let   $0< \sigma  < n/2$ and $-\infty <  \alpha < 2\sigma$.  Suppose that  $u \in C(\mathbb{R}^n)$ is a positive solution of \eqref{IntoLiou=001} with  $0 < p < \frac{n+2\sigma -2\alpha}{n-2\sigma}$.   Then there exists a small  $\lambda_0 >0$ such that for any $0< \lambda < \lambda_0$  we have
\begin{equation}\label{S3o5SC101}
u_{\lambda} (y) \leq u(y) ~~~~~ \forall ~ |y| \geq \lambda. 
\end{equation}
\end{lemma}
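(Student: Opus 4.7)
The plan is a contradiction argument in the spirit of the Li--Zhu starting point for the method of moving spheres on integral equations. Define the failure set
$$\Sigma_\lambda := \{y \in \mathbb{R}^n : |y| > \lambda,\ u_\lambda(y) > u(y)\}$$
and suppose, toward a contradiction, that $\Sigma_\lambda \neq \emptyset$ for arbitrarily small $\lambda > 0$. I will show the opposite, producing a $\lambda_0>0$ with $\Sigma_\lambda = \emptyset$ for all $\lambda \in (0,\lambda_0)$.

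First I exploit the already-established integral identity \eqref{IntABC-0hy2}. The kernel $K(0,\lambda;\xi,z)$ is strictly positive for $|\xi|,|z|>\lambda$, and the subcriticality $p<(n+2\sigma-2\alpha)/(n-2\sigma)$ gives $\tau:=n+2\sigma-2\alpha-p(n-2\sigma)>0$, so $(\lambda/|z|)^{\tau}<1$ on $\{|z|>\lambda\}$. Splitting the right-hand side of \eqref{IntABC-0hy2} over $\Sigma_\lambda$ and its complement in $\{|z|>\lambda\}$: on the complement $u(z)\ge u_\lambda(z)$, and combined with $(\lambda/|z|)^\tau<1$ the bracket is $\ge u(z)^p - u_\lambda(z)^p \ge 0$. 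Since the left-hand side is negative at any $\xi \in \Sigma_\lambda$, one deduces
$$0 < u_\lambda(\xi) - u(\xi) \le \int_{\Sigma_\lambda} \frac{u_\lambda(z)^p - u(z)^p}{|z|^\alpha \,|\xi - z|^{n - 2\sigma}}\, dz.$$

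Second, I apply a mean-value estimate to the numerator. For $p\ge 1$, $u_\lambda^p - u^p \le p\, u_\lambda^{p-1}(u_\lambda - u)$ on $\Sigma_\lambda$; for $0<p<1$, concavity of $t\mapsto t^p$ (with $u>0$) gives $u_\lambda^p - u^p \le p\, u^{p-1}(u_\lambda - u)$. Applying the Stein--Weiss weighted Hardy--Littlewood--Sobolev inequality to the operator $Tf(\xi):=\int f(z)/(|z|^\alpha |\xi-z|^{n-2\sigma})\,dz$ and combining with H\"older's inequality yields
$$\|u_\lambda - u\|_{L^q(\Sigma_\lambda)} \le C\, \|u_\lambda\|_{L^{s}(\Sigma_\lambda)}^{p-1}\, \|u_\lambda - u\|_{L^q(\Sigma_\lambda)},$$
where $q$ and $s$ are chosen through the Stein--Weiss admissibility $1/r - 1/q = (2\sigma-\alpha)/n$ together with the H\"older split $1/r = 1/q + (2\sigma-\alpha)/n$, giving $s=(p-1)n/(2\sigma-\alpha)$.

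Third, I establish smallness of $\|u_\lambda\|_{L^{s}(\{|y|>\lambda\})}$ as $\lambda\to 0$. By continuity and positivity, $M:=\sup_{\overline{B_1}} u<\infty$; for $\lambda\le 1$ and $|y|>\lambda$ one has $|y^\lambda|\le\lambda\le 1$, hence $u(y^\lambda)\le M$. The change of variable $y=z^\lambda$ (with Jacobian $(\lambda/|z|)^{2n}$) converts the integral into
$$\int_{|y|>\lambda} u_\lambda(y)^{s}\, dy = \int_{|z|<\lambda} u(z)^{s}\, \left(\frac{\lambda}{|z|}\right)^{\!2n - s(n-2\sigma)}\! dz \le C\,\lambda^{n}\longrightarrow 0$$
whenever $s>n/(n-2\sigma)$. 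Once $\lambda$ is small enough that $C\|u_\lambda\|_{L^s}^{p-1} < 1$, the displayed inequality forces $\|u_\lambda - u\|_{L^q(\Sigma_\lambda)} = 0$, contradicting $\Sigma_\lambda\ne\emptyset$.

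The main obstacle I anticipate is reconciling the Stein--Weiss admissibility with the full subcritical range $p \in (0,(n+2\sigma-2\alpha)/(n-2\sigma))$: the simultaneous constraints $s=(p-1)n/(2\sigma-\alpha)$ and $s>n/(n-2\sigma)$ from Step 3 need not hold for all such $p$, in particular not for $p$ close to $1$ or $p<1$. I will handle this by splitting into subcases on $p$ and choosing $(q,s)$ adaptively---using the MVT variant $u_\lambda^p - u^p \le p u^{p-1}(u_\lambda - u)$ in the low-$p$ range and working in a suitably weighted $L^q$ space---so that both the Stein--Weiss integrability and the $\lambda^n$-smallness of Step 3 remain usable throughout.
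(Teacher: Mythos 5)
Your proposal has the right ingredients at the level of the identity \eqref{IntABC-0hy2}, the positivity of the kernel, and the mean-value estimate, but there are two genuine gaps, and the second one is exactly the obstacle you yourself flag without resolving.

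First, you are missing the outer estimate that confines the failure set to a small ball. The paper's proof opens by using Fatou's lemma on the integral equation to get $\liminf_{|x|\to\infty}|x|^{n-2\sigma}u(x)\geq\int u^p|y|^{-\alpha}\,dy>0$, hence $u(y)\geq C(r_0)|y|^{-(n-2\sigma)}$ for $|y|\geq r_0$; combined with $u_\lambda(y)\leq(\lambda/|y|)^{n-2\sigma}\sup_{B_{r_0}}u$ (since the Kelvin transform evaluates $u$ inside $B_\lambda\subset B_{r_0}$), this forces $u_\lambda\leq u$ on $\{|y|\geq r_0\}$ for all $\lambda<\lambda_1(r_0)$, so $\Sigma_\lambda\subset B_{r_0}\setminus B_\lambda$. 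Without this, your $\Sigma_\lambda$ could be unbounded, in which case the $L^\infty$ control on $u^{p-1}$ (for $p<1$, where $u^{p-1}$ blows up as $u$ decays at infinity) or on $u_\lambda^{p-1}$ is simply not available, and the entire $L^q$ bootstrapping loses its footing.

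Second, and more decisively, your smallness mechanism does not cover the stated range of $p$. You fold the weight $|z|^{-\alpha}$ into a Stein--Weiss kernel and then aim for $\|u_\lambda\|_{L^s(\{|y|>\lambda\})}\to 0$ via a change of variables, which requires $s=(p-1)n/(2\sigma-\alpha)>n/(n-2\sigma)$, i.e.\ $p>(n-\alpha)/(n-2\sigma)$. This is far more restrictive than $p>0$, fails entirely for $p\leq 1$ (where $s\leq 0$), and also runs into Stein--Weiss admissibility issues when $\alpha<0$. You acknowledge the problem but the proposed fix (``splitting into subcases... working in a suitably weighted $L^q$ space'') is a placeholder, not an argument. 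The paper avoids this altogether: after confining $\Sigma_\lambda\subset B_{r_0}$, it applies the \emph{unweighted} Hardy--Littlewood--Sobolev inequality to the Riesz kernel and peels off the factor $|z|^{-\alpha}\max\{u^{p-1},u_\lambda^{p-1}\}$ by H\"older in $L^{n/(2\sigma)}(B_{r_0})$. The smallness then comes not from decay of $u_\lambda$ but from $\||\cdot|^{-\alpha}\|_{L^{n/(2\sigma)}(B_{\varepsilon_0})}\to 0$ as $\varepsilon_0\to 0$, which needs only $\alpha<2\sigma$ and works uniformly in $p$ throughout the full subcritical range. Adopting the outer estimate and this H\"older/HLS split is the correct way to repair the proposal.
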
 
\begin{proof}
By Fatou lemma,  we have  
$$
 \liminf_{ |x| \to \infty }   |x|^{n-2\sigma} u(x) = \liminf_{ |x| \to \infty}   \int_{\mathbb{R}^n} \frac{|x|^{n-2\sigma} u(y)^{p} }{|y|^{\alpha} |x - y|^{n-2\sigma}} dy  \geq \int_{\mathbb{R}^n} \frac{u(y)^{p}}{|y|^\alpha} dy >0. 
$$
This, together with the positivity and continuity of $u$, yields that for each $r_0>0$
\be\label{CC=b0iei}
u(y) \geq \frac{C(r_0)}{|y|^{n-2\sigma}} ~~~~~ \forall ~ |y| \geq r_0 
\ee
with some constant $C(r_0) >0$. Hence, for every $r_0 >0$, there exists $\lambda_1= \lambda_1(r_0) \in (0, r_0)$ such that for any $0 < \lambda <\lambda_1$, 
\be\label{LilIn=001}
u_\lambda(y) =  \left( \frac{\lambda}{| y |} \right)^{n-2\sigma} u\left( \frac{\lambda^2 y}{|y|^2} \right) \leq \left( \frac{\lambda}{| y |} \right)^{n-2\sigma} \sup_{B_{r_0}} u \leq u(y)~~~~~ \forall ~ |y| \geq r_0.
\ee

Next we will show that $u_\lambda(y)  \leq u(y)$ on $B_{r_0} \backslash B_\lambda$ if $r_0$ is small and $0< \lambda < \lambda_1(r_0)$. Define $w^\lambda(y) = u(y) - u_\lambda(y)$ for $|y| \geq \lambda$ and denote 
$$
B_{\lambda}^-:= \{y \in \mathbb{R}^n \backslash B_\lambda: w^\lambda(y) <0 \}. 
$$
Then by \eqref{LilIn=001} we know that $B_{\lambda}^- \subset B_{r_0} \backslash B_\lambda$ for any fixed $r_0>0$ and for all $0< \lambda < \lambda_1(r_0)$. 
Using \eqref{IntABC-0hy2} we have that  for $0< \lambda < \lambda_1(r_0)$ and $y \in B_{\lambda}^-$,  
$$
\aligned
0> w^\lambda(y)  & = \int_{|z|\geq \lambda} K(0, \lambda; y, z) \left[  \frac{u(z)^p}{|z|^{\alpha}}  - \left( \frac{\lambda}{|z|} \right)^{n+2\sigma -2\alpha  - p(n-2\sigma)} \frac{u_{\lambda}(z)^p}{|z|^{\alpha}}  \right] dz \\
& \geq \int_{B_\lambda^-} K(0, \lambda; y, z) |z|^{-\alpha}  \left(  u(z)^p - u_\lambda(z)^p  \right) dz  \\
& \geq p \int_{B_\lambda^-} \frac{1}{|y -z|^{n-2\sigma}} |z|^{-\alpha} \max\{ u(z)^{p-1}, u_\lambda(z)^{p-1} \}   w^\lambda(z) dz. 
\endaligned
$$
Hence,  by Hardy-Littlewood-Sobolev inequality (see, e.g., \cite{Lieb}),  we obtain that for any $\frac{n}{n-2\sigma} < q < \infty$, 
\be\label{LilIn=002}
\aligned
\|w^\lambda\|_{L^q(B_\lambda^-)} & \leq C  \big\| |\cdot|^{-\alpha} \max\{ u^{p-1}, u_\lambda^{p-1} \} w^\lambda \big\|_{L^{\frac{nq}{n+2\sigma q}}(B_\lambda^-)} \\
& \leq C \big\| |\cdot|^{-\alpha} \max\{ u^{p-1}, u_\lambda^{p-1} \} \big\|_{L^{\frac{n}{2\sigma}}(B_\lambda^-)}  \|w^\lambda\|_{L^q(B_\lambda^-)}. 
\endaligned 
\ee
Since $\|\max\{ u^{p-1}, u_\lambda^{p-1}\} \|_{L^\infty(B_\lambda^-)} \leq \|u^{p-1}\|_{L^\infty(B_{r_0})}$ and $\alpha< 2\sigma$, we can take $r_0=\varepsilon_0$ for some $\varepsilon_0>0$ small enough such that 
$$
C \big\| |\cdot|^{-\alpha} \max\{ u^{p-1}, u_\lambda^{p-1} \} \big\|_{L^{\frac{n}{2\sigma}}(B_\lambda^-)} \leq C \big\| |\cdot|^{-\alpha} \big\|_{L^{\frac{n}{2\sigma}}(B_{\varepsilon_0})} \big\|u^{p-1} \big\|_{L^\infty(B_{\varepsilon_0})}  \leq \frac{1}{2} 
$$
for all $0< \lambda < \lambda_0:=\lambda_1(\varepsilon_0)$,  and thus, \eqref{LilIn=002} implies that 
$$
\|w^\lambda\|_{L^q(B_\lambda^-)}  =0. 
$$
Therefore, $B_\lambda^- =\emptyset$ for any $0< \lambda < \lambda_0$. That is, \eqref{S3o5SC101} holds for any $0< \lambda < \lambda_0$. 
\end{proof} 

We define 
$$
\bar{\lambda}:= \sup\{ \mu > 0 ~ | ~ u_{\lambda}(y) \leq u(y) ~ \forall ~ |y | \geq \lambda, ~ \forall ~ 0< \lambda <\mu \}. 
$$
By Lemma \ref{S3oLeM=001}, $\bar{\lambda}$ is well-defined and $\bar{\lambda}>0$. Moreover, we have the following result.    

\begin{lemma}\label{S3InLem02}
Let   $0< \sigma  < n/2$ and $-\infty <  \alpha < 2\sigma$.  Suppose that  $u \in C(\mathbb{R}^n)$ is a positive solution of \eqref{IntoLiou=001} with  $0 < p < \frac{n+2\sigma -2\alpha}{n-2\sigma}$.   Then $\bar{\lambda} = \infty$. 
\end{lemma}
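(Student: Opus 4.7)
I argue by contradiction: suppose $\bar\lambda<\infty$. The strategy has two parts: first, rule out the equality case $u_{\bar\lambda}\equiv u$ on $\{|y|>\bar\lambda\}$; second, show that the sphere can be moved slightly past $\bar\lambda$, contradicting maximality.

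For the first part, if $u_{\bar\lambda}\equiv u$ outside $B_{\bar\lambda}$, then substituting into identity \eqref{IntABC-0hy2} at $\lambda=\bar\lambda$ would give
\[
0=\int_{|z|\geq\bar\lambda} K(0,\bar\lambda;\xi,z)\,\frac{u(z)^p}{|z|^\alpha}\left[1-\left(\frac{\bar\lambda}{|z|}\right)^{n+2\sigma-2\alpha-p(n-2\sigma)}\right]dz.
\]
The subcritical hypothesis $p<\frac{n+2\sigma-2\alpha}{n-2\sigma}$ makes the exponent $n+2\sigma-2\alpha-p(n-2\sigma)$ strictly positive, so the bracket is strictly positive on $\{|z|>\bar\lambda\}$. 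Combined with the strict positivity of $K$ and $u$, this forces a strictly positive integrand on a set of positive measure, a contradiction. Hence $u\not\equiv u_{\bar\lambda}$ outside $B_{\bar\lambda}$, and the same identity (applied with $u_{\bar\lambda}\leq u$) then upgrades to the strict inequality $u_{\bar\lambda}(y)<u(y)$ for every $|y|>\bar\lambda$.

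Next, for $\lambda>\bar\lambda$ slightly, I would rerun the Hardy--Littlewood--Sobolev argument of Lemma~\ref{S3oLeM=001} on $B_\lambda^-:=\{|y|>\lambda:u_\lambda(y)>u(y)\}$ to obtain
\[
\|w^\lambda\|_{L^q(B_\lambda^-)}\leq C\left\||\cdot|^{-\alpha}\max\{u^{p-1},u_\lambda^{p-1}\}\right\|_{L^{n/(2\sigma)}(B_\lambda^-)}\|w^\lambda\|_{L^q(B_\lambda^-)}.
\]
The task is to show the prefactor is below $1/2$ uniformly for $\lambda\in(\bar\lambda,\bar\lambda+\eta)$ for some $\eta>0$. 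I would decompose $B_\lambda^-$ into three regions: a narrow shell $\{\lambda\leq|y|\leq\lambda(1+\delta)\}$ contributing smallness from Lebesgue measure combined with $\alpha<2\sigma$; a compact middle annulus $\{\lambda(1+\delta)\leq|y|\leq R\}$, on which the strict inequality $u-u_{\bar\lambda}>0$ is bounded below by compactness and continuity, hence persists under small perturbations of $\lambda$, emptying $B_\lambda^-$ there; and a tail $\{|y|>R\}$, where the Fatou lower bound $u(y)\geq M|y|^{2\sigma-n}$ established in Lemma~\ref{S3oLeM=001} is compared against $u_\lambda(y)\leq C\lambda^{n-2\sigma}|y|^{2\sigma-n}$. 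With the prefactor driven below $1/2$, the displayed inequality forces $w^\lambda\equiv 0$ on $B_\lambda^-$, hence $B_\lambda^-=\emptyset$, i.e.\ $u_\lambda\leq u$ on $\{|y|\geq\lambda\}$ for every $\lambda\in[\bar\lambda,\bar\lambda+\eta)$, contradicting the definition of $\bar\lambda$.

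\textbf{Main obstacle.} I expect the delicate step to be the uniform tail comparison on $\{|y|>R\}$. Unlike the compact annulus, where continuity in $\lambda$ closes the gap cleanly, ruling out a component of $B_\lambda^-$ escaping to infinity requires a quantitative matching of the asymptotics $|y|^{n-2\sigma}u(y)\to\int|z|^{-\alpha}u(z)^p\,dz$ (read off from the integral equation and dominated convergence) against $|y|^{n-2\sigma}u_\lambda(y)\to\lambda^{n-2\sigma}u(0)$, and a precise use of the strict inequality at $\bar\lambda$ to ensure the first limit dominates the second throughout a right-neighborhood of $\bar\lambda$. A cleaner alternative is to perform a Kelvin inversion at infinity, converting the tail of $B_\lambda^-$ into a neighborhood of the origin of a transformed function to which the starting-point argument of Lemma~\ref{S3oLeM=001} applies directly.
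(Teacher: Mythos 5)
Your overall architecture is right and matches the paper's: argue by contradiction, use identity \eqref{IntABC-0hy2} to establish strict inequality $u_{\bar\lambda}<u$ on $\{|y|>\bar\lambda\}$, confine the bad set $B_\lambda^-$ to a narrow shell for $\lambda$ slightly past $\bar\lambda$, and then run the Hardy--Littlewood--Sobolev contraction estimate as in Lemma~\ref{S3oLeM=001}. You also correctly flag where the difficulty lies. However, the tail step as you sketch it does contain a genuine gap, and the paper resolves it with a cleaner idea than either of your two proposals.

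The gap: you propose to control the far region $\{|y|>R\}$ by comparing the separate asymptotics $|y|^{n-2\sigma}u(y)\to\int|z|^{-\alpha}u(z)^p\,dz$ and $|y|^{n-2\sigma}u_\lambda(y)\to\lambda^{n-2\sigma}u(0)$. For this to close, you would need the strict numerical inequality
$$
\int_{\R^n}|z|^{-\alpha}u(z)^p\,dz>\lambda^{n-2\sigma}u(0)
$$
to hold uniformly for $\lambda$ in a right-neighborhood of $\bar\lambda$, and nothing in the hypotheses delivers this; the Fatou bound from Lemma~\ref{S3oLeM=001} gives no control on how the constant $\int|z|^{-\alpha}u^p\,dz$ compares to $\bar\lambda^{n-2\sigma}u(0)$. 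That comparison is not a refinement of your argument, it is the entire obstacle.

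The paper's resolution, which you are missing, is to apply Fatou's lemma not to $u$ and $u_\lambda$ separately but to their \emph{difference} directly, using \eqref{IntABC-0hy2} at $\lambda=\bar\lambda$. Since $u_{\bar\lambda}\leq u$ and the weight factor satisfies $(\bar\lambda/|z|)^{n+2\sigma-2\alpha-p(n-2\sigma)}<1$ strictly for $|z|>\bar\lambda$, the integrand in \eqref{IntABC-0hy2} is nonnegative, and Fatou gives
$$
\liminf_{|y|\to\infty}|y|^{n-2\sigma}(u-u_{\bar\lambda})(y)\;\geq\;\int_{|z|\geq\bar\lambda}\Bigl[1-\bigl(\tfrac{\bar\lambda}{|z|}\bigr)^{n-2\sigma}\Bigr]\frac{u(z)^p-u_{\bar\lambda}(z)^p}{|z|^\alpha}\,dz\;>\;0.
$$
This produces the uniform lower bound $(u-u_{\bar\lambda})(y)\geq c_1|y|^{2\sigma-n}$ for all $|y|\geq\bar\lambda+\delta$, which covers your middle annulus and your tail in a single stroke. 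Persistence for $\lambda\in[\bar\lambda,\bar\lambda+\varepsilon]$ then follows from the explicit formula for $u_\lambda$ (the inverted point $\lambda^2 y/|y|^2$ stays in a compact set where $u$ is uniformly continuous, so $|u_{\bar\lambda}(y)-u_\lambda(y)|\leq\tfrac{c_1}{2}|y|^{2\sigma-n}$ for $\lambda$ close to $\bar\lambda$). Thus $B_\lambda^-$ is confined to the narrow shell $B_{\bar\lambda+\delta}\setminus B_\lambda$, where your HLS estimate does close because $\||\cdot|^{-\alpha}\|_{L^{n/(2\sigma)}}$ over a shell of width $\delta$ can be made arbitrarily small since $\alpha<2\sigma$. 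Two smaller notes: your preliminary step ruling out $u_{\bar\lambda}\equiv u$ separately is unnecessary, as the one-line computation above already yields strict positivity of $u-u_{\bar\lambda}$ pointwise; and the Kelvin-inversion-at-infinity alternative you float is plausible but you did not execute it, so it does not fill the gap as written.
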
 
\begin{proof}
Suppose by contradiction that $\bar{\lambda} < \infty$. The definition of $\bar{\lambda}$  implies  that 
\be\label{S3InLem02=001} 
u_{\bar{\lambda}} (y)\leq u(y) ~~~~~ \forall ~ |y | \geq \bar{\lambda}.
\ee
Since $n+2\sigma-2\alpha - p(n-2\sigma) >0$, we have $\left( \frac{\bar{\lambda}}{|z|} \right)^{n+2\sigma-2\alpha - p(n-2\sigma)} <1$  for $|z| > \bar{\lambda}$. Using \eqref{IntABC-0hy2} we get, for $|y| > \bar{\lambda}$, 
$$
\aligned
u(y) -u_{\bar{\lambda}}(y) & = \int_{|z|\geq \bar{\lambda}} K(0, \bar{\lambda}; y, z) \left[  \frac{u(z)^p}{|z|^{\alpha}}  - \left( \frac{\bar{\lambda}}{|z|} \right)^{n+2\sigma -2\alpha  - p(n-2\sigma)} \frac{u_{\bar{\lambda}}(z)^p}{|z|^{\alpha}}  \right] dz  \\
& \geq \int_{|z|\geq \bar{\lambda}} K(0, \bar{\lambda}; y, z) \left[  1  - \left( \frac{\bar{\lambda}}{|z|} \right)^{n+2\sigma -2\alpha  - p(n-2\sigma)} \right] \frac{u_{\bar{\lambda}}(z)^p}{|z|^{\alpha}}   dz >0. 
\endaligned
$$
This, together with the Fatou lemma, yields 
$$
\aligned
\liminf_{ |y| \to \infty } |y|^{n-2\sigma} (u -u_{\bar{\lambda}})(y) & \geq  \liminf_{ |y| \to \infty } \int_{|z|\geq \bar{\lambda}}  |y|^{n-2\sigma}K(0, \bar{\lambda}; y, z) \frac{u(z)^p - u_{\bar{\lambda}}(z)^p}{|z|^{\alpha}}  dz\\
& \geq \int_{|z|\geq \bar{\lambda}}  \left[  1  - \left( \frac{\bar{\lambda}}{|z|} \right)^{n-2\sigma} \right] \frac{u(z)^p - u_{\bar{\lambda}}(z)^p}{|z|^{\alpha}}  dz >0. 
\endaligned
$$
Consequently, for $\delta>0$ small which will be fixed later, there exists $c_1=c_1(\delta) > 0$ such that 
$$
u(y)-u_{\bar{\lambda}}(y) \geq \frac{c_1}{|y|^{n-2\sigma}}  ~~~~~ \forall ~ |y | \geq \bar{\lambda} + \delta. 
$$
By the above and explicit formula of $u_\lambda$, there exists a small $\varepsilon \in (0, \delta)$ such that for all $\bar{\lambda} \leq \lambda \leq \bar{\lambda} + \varepsilon$, 
\be\label{S3InLem02=002} 
u(y) - u_\lambda(y) \geq \frac{c_1}{|y|^{n-2\sigma}} + \big( u_{\bar{\lambda}}(y) - u_\lambda(y) \big) \geq \frac{c_1}{2|y|^{n-2\sigma}} ~~~~ \forall ~ |y | \geq \bar{\lambda} +\delta. 
\ee
Now,  we focus on the narrow region $B_{\bar{\lambda} +\delta} \backslash B_\lambda$.  As in the proof of Lemma \ref{S3oLeM=001},   we define $w^\lambda(y) = u(y) - u_\lambda(y)$ for $|y| \geq \lambda$ and denote  
$$
B_{\lambda}^-:= \{y \in \mathbb{R}^n \backslash B_\lambda: w^\lambda(y) <0 \}. 
$$
By \eqref{S3InLem02=002} we have $B_{\lambda}^- \subset B_{\bar{\lambda} +\delta} \backslash B_\lambda$ for any $\bar{\lambda} \leq \lambda \leq \bar{\lambda} + \varepsilon$.  Thus,  using  \eqref{IntABC-0hy2}  we have,  for $y \in B_{\lambda}^-$ and for $\bar{\lambda} \leq \lambda \leq \bar{\lambda} +\varepsilon$,  
$$
\aligned
0 > w^\lambda(y)  & \geq  \int_{|z|\geq \lambda} K(0, \lambda; y, z) |z|^{-\alpha}  \left( u(z)^p - u_{\lambda}(z)^p \right)  dz  \\
& \geq p \int_{B_\lambda^-} \frac{1}{|y -z|^{n-2\sigma}} |z|^{-\alpha} \max\{ u(z)^{p-1}, u_\lambda(z)^{p-1} \}   w^\lambda(z) dz.
\endaligned
$$
By Hardy-Littlewood-Sobolev inequality (see, e.g., \cite{Lieb}),  we obtain that for any $\frac{n}{n-2\sigma} < q < \infty$, 
\be\label{S3InLem02=003}
\aligned
\|w^\lambda\|_{L^q(B_\lambda^-)} & \leq C  \big\| |\cdot|^{-\alpha} \max\{ u^{p-1}, u_\lambda^{p-1} \} w^\lambda \big\|_{L^{\frac{nq}{n+2\sigma q}}(B_\lambda^-)} \\
& \leq C \big\| |\cdot|^{-\alpha} \max\{ u^{p-1}, u_\lambda^{p-1} \} \big\|_{L^{\frac{n}{2\sigma}}(B_\lambda^-)}  \|w^\lambda\|_{L^q(B_\lambda^-)}. 
\endaligned 
\ee
Note that we can choose $\delta>0$ small enough such that 
$$
C \big\| |\cdot|^{-\alpha} \max\{ u^{p-1}, u_\lambda^{p-1} \} \big\|_{L^{\frac{n}{2\sigma}}(B_\lambda^-)} \leq C \big\| |\cdot|^{-\alpha} \max\{ u^{p-1}, u_\lambda^{p-1} \} \big\|_{L^{\frac{n}{2\sigma}}(B_{\bar{\lambda} +\delta} \backslash B_\lambda)}  \leq \frac{1}{2} 
$$
for all $\bar{\lambda} \leq \lambda \leq  \bar{\lambda} +\varepsilon$,  and thus, \eqref{S3InLem02=003} implies that  
$$
\|w^\lambda\|_{L^q(B_\lambda^-)}  =0. 
$$
Therefore, $B_\lambda^- =\emptyset$ for all $\bar{\lambda} \leq \lambda \leq  \bar{\lambda} +\varepsilon$.  That is, there exists $\varepsilon>0$ such that for all $\bar{\lambda} \leq \lambda \leq  \bar{\lambda} +\varepsilon$, 
$$
u_\lambda(y) \leq u(y) ~~~~~ \forall ~ |y | \geq \lambda,
$$
which contradicts the definition of $\bar{\lambda}$.   
\end{proof}

\noindent{\it Proof of Theorem \ref{IntLiou=02App01}}.  
Suppose by contrary that  $u \in C(\mathbb{R}^n)$ is  a  nontrivial nonnegative solution of \eqref{IntoLiou=001},  then there exists $x_0 \in \mathbb{R}^n$ such that $u(x_0) >0$.  It follows from the equation \eqref{IntoLiou=001} that $u(x) >0$ for all $x \in \mathbb{R}^n$. Thus, $u \in C(\mathbb{R}^n)$ is  a positive solution of \eqref{IntoLiou=001}.   By Lemma \ref{S3InLem02} we have 
\be\label{S3InLem02=004}
u(x) \geq  \left( \frac{\lambda}{|x|}\right)^{n-2\sigma} u \left( \frac{\lambda^2 x}{|x|^2}  \right) ~~~~~ \forall ~ |x | \geq \lambda, ~ \forall ~ 0< \lambda < \infty. 
\ee
For any $|x| \geq 1$,  taking $\lambda = \sqrt{|x|}$  in \eqref{S3InLem02=004} yields that  
\be\label{S3InLem02=005}
u(x) \geq |x|^{-\frac{n-2\sigma}{2}} u\left( \frac{x}{|x|}  \right).
\ee
Hence, we have the following lower bound:
\be\label{S3InLem02=006}
u(x) \geq \left( \min_{\partial B_1} u \right) |x|^{-\frac{n-2\sigma}{2}} =: C_0  |x|^{-\frac{n-2\sigma}{2}} ~~~~~ \forall ~ |x | \geq  1. 
\ee
Now, we use the `` Bootstrap" iteration argument to improve the lower bound \eqref{S3InLem02=006}.  Let $\mu_0:=\frac{n-2\sigma}{2}$.  From \eqref{IntoLiou=001} and \eqref{S3InLem02=006} we have,  for $|x| \geq 1$, 
$$
\aligned 
u(x) &\geq C \int_{|x| \leq |y| \leq 2|x|} \frac{1}{|x -y|^{n-2\sigma} |y|^{p\mu_0 + \alpha}} dy \\
& \geq  C |x|^{-(n-2\sigma)} \int_{|x| \leq |y| \leq 2|x|} \frac{1}{|y|^{p\mu_0 + \alpha}} dy \\
& =: C_1 |x|^{-(p \mu_0 + \alpha -2\sigma)}
\endaligned
$$
with some constant $C_1>0$.  Let $\mu_1:=p \mu_0 + \alpha -2\sigma$. Since $0 < p < \frac{n+2\sigma-2\alpha}{n-2\sigma}$, we have
\be\label{S3InLem02=00iig}
\mu_1=p \mu_0 + \alpha -2\sigma < \mu_0. 
\ee
This means that we have obtained a better lower bound after one iteration,  
\be\label{S3InLem02=00ff}
u(x) \geq  C_1 |x|^{-\mu_1}  ~~~~~ \forall ~ |x | \geq  1. 
\ee
Continuing this process, after $k$ iterations we would get the following lower bound 
\be\label{S3InLem02=0fhry}
u(x) \geq  C_k |x|^{-\mu_k}  ~~~~~ \forall ~ |x | \geq  1, 
\ee
where $C_k>0$ is a constant and the sequence $\{\mu_k\}_{k\geq 1}$ satisfies 
\be\label{S3InLem02=00gmfhy}
\mu_{k}= p \mu_{k-1} +\alpha -2\sigma,~~~~~~~ k=1, 2, \cdots.   
\ee
It is easy to check that the sequence $\{\mu_k\}_{k\geq 1}$ is monotonically decreasing with respect to $k$.  Moreover,  as $k \to \infty$, 
$$
\mu_k \to 
\begin{cases}
\frac{\alpha - 2\sigma}{1-p} ~~~~ & \textmd{if} ~ 0 < p< 1, \\
-\infty ~~~~ & \textmd{if} ~ 1 \leq p< \frac{n+2\sigma-2\alpha}{n-2\sigma}. 
\end{cases}
$$
Thus, by \eqref{S3InLem02=0fhry} we have the following lower estimates: for $|x| \geq 1$,  
$$
u(x) \geq 
\begin{cases}
C_\kappa|x|^\kappa ~~~~ \forall ~ \kappa < \frac{2\sigma - \alpha}{1-p},  ~~~~ & \textmd{if} ~ 0 < p< 1, \\
C_\kappa|x|^\kappa ~~~~ \forall ~ \kappa < +\infty,   ~~~~ & \textmd{if} ~ 1 \leq p< \frac{n+2\sigma-2\alpha}{n-2\sigma}. 
\end{cases}
$$
It follows from the equation \eqref{IntoLiou=001} and  above estimates that  
$$
u(0) = \int_{\mathbb{R}^n} \frac{u(y)^{p}}{|y|^{n+\alpha-2\sigma}} dy \geq \int_{|y| \geq 1} \frac{1}{|y|^{n+\alpha-2\sigma}} dy =+\infty, 
$$
which is a contradiction. Therefore, we must have $u \equiv 0$ in $\mathbb{R}^n$.  The proof of Theorem \ref{IntLiou=02App01} is completed. 
\hfill$\square$

\vskip0.10in 

\noindent{\it Proof of Theorem \ref{THM=02App02}.} It follows from Theorems \ref{THM=02} and  \ref{IntLiou=02App01}. 
\hfill$\square$

\section{Existence of solutions}\label{Ex}
In this section, we will prove Theorem \ref{THMExi01} using the classical bifurcation theory and a higher order fractional Pohozaev identity established by Ros-Oton and Serra \cite{RS1,RS2}. The similar method was recently used by Ao-Chan-DelaTorre-Fontelos-Gonz\'{a}lez-Wei \cite{AW1}, Hyder-Sire \cite{HS}, and Ng\^{o}-Ye \cite{NY} for fractional Laplacian ($0 < \sigma <1$), biharmonic case ($\sigma=2$), and polyharmonic case ($\sigma$ is a positive integer), respectively.

We first consider the following auxiliary problem 
\begin{equation}\label{24=fra-au01}
\begin{cases}
(-\Delta)^\sigma u= \lambda |x|^{-\alpha} (1 + u)^p ~~~~ & \textmd{in} ~ B_1, \\
u \equiv 0~~~~ &\textmd{in} ~ \mathbb{R}^n \setminus B_1, 
\end{cases}
\end{equation}
where $\lambda >0$. Under the Dirichlet boundary condition, the Green function $G_\sigma(x, y)$ of $(-\Delta)^\sigma$ in $B_1$ is positive. Indeed, it follows from Dipierro-Grunau \cite{DG} that the classical Boggio formula 
\begin{equation}\label{Boggio}
G_\sigma(x, y) = k_{\sigma, n} |x-y|^{2\sigma -n} \int_1^{\frac{\sqrt{|x|^2|y|^2 -2 x \cdot y +1}}{|x-y|}} (t^2 -1)^{\sigma -1} t^{1-n} dt ~~~~~ \textmd{for} ~ x, y \in B_1, 
\end{equation}
holds true for any $\sigma>0$. From the positivity of $G_\sigma$, one can follow the standard monotone iteration argument (see, e.g., \cite{RS3,NY}) to obtain that there exists a minimal solution $u_\lambda$ for small $\lambda$. Moreover, one can find a $\lambda^*>0$ such that 
\begin{enumerate}[label = \rm(\roman*)]
\item the minimal solution $u_\lambda$ exists for any $\lambda \in (0, \lambda^*)$, $u_\lambda$ is radially symmetric and nonincreasing, and the mapping $\lambda \mapsto u_\lambda(x)$ is nondecreasing in $(0, \lambda^*)$ for any $x \in B_1$; 
 
\item for $\lambda > \lambda^*$, \eqref{24=fra-au01} has no solutions. 
\end{enumerate} 
 
Next, we show the uniqueness of minimal solutions of \eqref{24=fra-au01} for small $\lambda>0$ using Schaaf's argument \cite{Sch}. This is based on the following higher order fractional Pohozaev identity obtained by Ros-Oton and Serra \cite{RS1,RS2}.

\begin{proposition}\label{Poho} 
Let $\Omega$ be a bounded smooth domain, let $f \in C_{loc}^{0,1}(\overline{\Omega} \times \mathbb{R})$, let $u$ be a bounded solution of 
\begin{equation}\label{24=fra-002}
\begin{cases}
(-\Delta)^\sigma u= f(x, u) ~~~~ & \textmd{in} ~ \Omega, \\
u \equiv 0~~~~ &\textmd{in} ~ \mathbb{R}^n \setminus \Omega, 
\end{cases}
\end{equation}
and let $\delta(x)=\mbox{dist}(x, \partial \Omega)$. Then $u/\delta^\sigma |_{\Omega}$ has a continuous extension to $\overline{\Omega}$, and the following identity holds 
$$
\int_\Omega \left( F(x, u) + \frac{1}{n} x \cdot \nabla_x F(x, u) - \frac{n-2\sigma}{2n} uf(x, u) \right) dx = \frac{\Gamma(1+\sigma)^2}{2n} \int_{\partial\Omega} \left( \frac{u}{\delta^\sigma} \right)^2 (x \cdot \nu) dS, 
$$
where $F(x,t)=\int_0^t f(x, \tau) d\tau$, $\nu$ is the unit outward normal to $\partial\Omega$ at $x$, and $\Gamma$ is the Gamma function. 
 \end{proposition}
 
Using integration by parts, we also have 
\[
\int_\Omega uf(x, u) dx =\int_{\mathbb{R}^n} |(-\Delta)^\frac{\sigma}{2} u|^2 dx. 
\]
Thus, the above Pohozaev identity leads to a fundamental inequality as follows. 
\begin{corollary}\label{Poho=co01} 
Under the assumptions of Proposition \ref{Poho}, for any star-shaped domain $\Omega$ and any $\kappa \in \mathbb{R}$, we have 
$$
\int_\Omega \left( F(x, u) + \frac{1}{n} x \cdot \nabla_x F(x, u) - \kappa uf(x, u) \right) dx \geq  \left(\frac{n-2\sigma}{2n} - \kappa \right) \int_{\mathbb{R}^n} |(-\Delta)^\frac{\sigma}{2} u|^2 dx. 
$$ 
\end{corollary} 

Now we are in a position to prove the uniqueness of solutions of \eqref{24=fra-au01} for all small $\lambda>0$. Remark that the supercritical growth is crucial here. 
\begin{lemma}\label{unique=24}
Let $p>p_{\sigma,\alpha}^*$. Then there exists $\lambda_0>0$ such that for every $\lambda \in (0, \lambda_0)$, $u_\lambda$ is the unique solution of \eqref{24=fra-au01} among the class
\[
\mathcal{H}^\sigma_c(\mathbb{R}^n) = \{u \in H^\sigma(\mathbb{R}^n) \cap C(\mathbb{R}^n): u \equiv 0 ~ in ~ \mathbb{R}^n \setminus B_1 \}. 
\]
\end{lemma}
\begin{proof}
Let $u_\lambda$ be the minimal solution of \eqref{24=fra-au01}, and let $u$ be another solution of \eqref{24=fra-au01}. Then $w:=u-u_\lambda \geq 0$ in $B_1$ and $w$ is a solution of 
\begin{equation}\label{unique=24=001}
\begin{cases}
(-\Delta)^\sigma w= \lambda |x|^{-\alpha} g_\lambda(x, w) ~~~~ & \textmd{in} ~ B_1, \\
w \equiv 0~~~~ &\textmd{in} ~ \mathbb{R}^n \setminus B_1, 
\end{cases}
\end{equation}
where $g_\lambda(x, w) = (1 + w + u_\lambda(x))^p - (1+u_\lambda(x))^p \geq 0$. We want to show that $w \equiv 0$ for small $\lambda>0$. Denoting 
$$
G_\lambda(x, w) =\int_0^w g_\lambda(x, t) dt, 
$$
and using Corollary \ref{Poho=co01} in $B_1$ we obtain, for any $\kappa \in \mathbb{R}$, that 
\begin{equation}\label{unique=24=002}
\aligned 
& \Big(\frac{n-2\sigma}{2n} - \kappa \Big) \int_{\mathbb{R}^n} |(-\Delta)^\frac{\sigma}{2} w|^2 dx \\
& ~~~ \leq \lambda \int_{B_1} \Big( |x|^{-\alpha} G_\lambda(x, w) + \frac{1}{n} x \cdot \nabla_x \big( |x|^{-\alpha} G_\lambda(x, w) \big)  - \kappa |x|^{-\alpha} w g_\lambda(x, w) \Big) dx \\ 
& ~~~ = \lambda \int_{B_1} |x|^{-\alpha} \Big( \big(1 - \frac{\alpha}{n} \big) G_\lambda(x, w) + \frac{1}{n} x \cdot \nabla_x G_\lambda(x, w)  - \kappa w g_\lambda(x, w) \Big) dx. 
\endaligned 
\end{equation}
Note that 
\begin{equation}\label{unique=24=003}
\aligned 
G_\lambda(x, w) &= w \int_0^1 \big[ (1 + sw + u_\lambda(x))^p - (1+u_\lambda(x))^p  \big]ds \\
&=p w^2 \int_0^1 \int_0^1 s(1 + u_\lambda(x) + \tau s w)^{p-1} d\tau ds. 
\endaligned 
\end{equation}
Hence
$$
\nabla_x G_\lambda(x, w) = \Big[ p(p-1) w^2 \int_0^1 \int_0^1 s(1 + u_\lambda(x) + \tau s w)^{p-2} d\tau ds \Big] \nabla u_\lambda(x). 
$$
Since $u_\lambda$ is radially decreasing, we have $x \cdot \nabla u_\lambda(x) \leq 0$, and then $x \cdot \nabla_x G_\lambda(x, w)  \leq 0$. Thus, by \eqref{unique=24=002} we have 
\begin{equation}\label{unique=24=004}
\Big(\frac{n-2\sigma}{2n} - \kappa \Big) \int_{\mathbb{R}^n} |(-\Delta)^\frac{\sigma}{2} w|^2 dx \leq  \lambda \int_{B_1} |x|^{-\alpha} \Big( \big(1 - \frac{\alpha}{n} \big) G_\lambda(x, w) - \kappa w g_\lambda(x, w) \Big) dx. 
\end{equation}
Recall that $0 \leq u_\lambda(x) \leq \|u_{\lambda^*/2}\|_{L^\infty(B_1)}$ for any $x \in B_1$ and any $\lambda \in [0, \lambda^*/2]$. Therefore, a direct calculation gives  
$$
\aligned 
\lim_{t \to +\infty} \frac{G_\lambda(x, t)}{ t g_\lambda(x,t) } &= \lim_{t \to +\infty} \frac{\frac{1}{p+1} \big[ (1+t+u_\lambda(x))^{p+1} - (1+ u_\lambda(x))^{p+1} \big] - t (1+ u_\lambda(x))^p }{t\big[ (1+t+u_\lambda(x))^{p} - (1+ u_\lambda(x))^{p} \big]} \\
& = \frac{1}{p+1} ~~~~ \mbox{uniformly for} ~ x \in B_1 ~ \mbox{and} ~ \lambda \in [0, \lambda^*/2]. 
\endaligned  
$$
This, together with \eqref{unique=24=003}, implies that for any $\epsilon >0$ there exists $M=M(\epsilon, p, \|u_{\lambda^*/2}\|_{L^\infty(B_1)}) >0$ such that 
\begin{equation}\label{unique=24=005}
G_\lambda(x, t) \leq \frac{1+\epsilon}{p+1} t g_\lambda(x,t) + M t^2
\end{equation}
for all $(x, t, \lambda) \in B_1 \times \mathbb{R}_+ \times [0, \lambda^*/2]$. Moreover, we observe that 
$$
\Big(1 -\frac{\alpha}{n} \Big) \frac{1}{p+1} - \frac{n-2\sigma}{2n} = \frac{(n+2\sigma -2\alpha) -p(n-2\sigma)}{2n(p+1)} <0 
$$
as $p>\frac{n+2\sigma -2\alpha}{n-2\sigma}$. Hence, one can choose $\epsilon >0$ and $\kappa>0$ such that 
$$
\Big(1 -\frac{\alpha}{n} \Big) \frac{1+\epsilon}{p+1} < \kappa < \frac{n-2\sigma}{2n}. 
$$
With these choices, by \eqref{unique=24=004} and \eqref{unique=24=005} we obtain 
\begin{equation}\label{unique=24=006}
\Big(\frac{n-2\sigma}{2n} - \kappa \Big) \int_{\mathbb{R}^n} |(-\Delta)^\frac{\sigma}{2} w|^2 dx \leq  \lambda M \Big(1 - \frac{\alpha}{n} \Big) \int_{B_1} |x|^{-\alpha} w^2 dx. 
\end{equation}
On the other hand, using the fractional Hardy-Sobolev inequality and the H\"{o}lder inequality we know 
$$
\int_{B_1} |x|^{-\alpha} w^2 dx \leq C_{n,\alpha,\sigma} \int_{\mathbb{R}^n} |(-\Delta)^\frac{\sigma}{2} w|^2 dx 
$$
for some constant $C_{n,\alpha,\sigma} >0$. Therefore, we have 
$$
\int_{\mathbb{R}^n} |(-\Delta)^\frac{\sigma}{2} w|^2 dx \leq \lambda M C_{n,\alpha,\sigma} \Big(1 - \frac{\alpha}{n} \Big) \Big(\frac{n-2\sigma}{2n} - \kappa \Big)^{-1} \int_{\mathbb{R}^n} |(-\Delta)^\frac{\sigma}{2} w|^2 dx, 
$$
which yields $w \equiv 0$ if
$$
\lambda < \lambda_0:= \frac{\Big(\frac{n-2\sigma}{2n} - \kappa \Big)}{M C_{n,\alpha,\sigma} \Big(1 - \frac{\alpha}{n} \Big)}. 
$$
This completes the proof of Lemma \ref{unique=24}. 
\end{proof}
 
We also need to show the following result by applying the classical bifurcation theory. Denote 
$$
E =\{u \in C(\mathbb{R}^n) : u(x) = \tilde{u}(|x|), ~ u \geq 0 ~ \mbox{in} ~B_1 ~\mbox{and} ~ u \equiv 0 ~ \mbox{in} ~ \mathbb{R}^n \setminus B_1\}. 
$$
\begin{lemma}\label{bifur=24}
There exists a sequence of solutions $(\lambda_j, u_j)$ of \eqref{24=fra-au01} in $(0, \lambda^*] \times E$ such that
$$
\lim_{j \to \infty} \lambda_j = \lambda_\infty \in [\lambda_0, \lambda^*] ~~~~~~ and ~~~~~~ \lim_{j \to \infty} \|u_j\|_{L^\infty(B_1)} = \infty, 
$$
where $\lambda_0 >0$ is given in Lemma \ref{unique=24}.   
\end{lemma} 
\begin{proof}
Define the operator $\mathcal{T}: \mathbb{R}_+ \times E \to E$, $(\lambda, u) \mapsto \mathcal{T}(\lambda, u)$ as 
$$
\mathcal{T}(\lambda, u)(x) =\lambda \int_{B_1} G_\sigma(x, y) |y|^{-\alpha} (1 + u(y))^p dy, ~~~~~~~ x \in B_1, 
$$ 
where $G_\sigma$ is the Green function given in \eqref{Boggio}. Then $\mathcal{T}$ is compact, that is, it maps bounded sets to relatively compact sets. 
For each $\lambda \in (0, \lambda^*)$, any solution $u$ of \eqref{24=fra-au01} also satisfies 
\be\label{T=equ}
u=\mathcal{T}(\lambda, u). 
\ee
Consider the continuation
$$
\mathcal{C} =\{(\lambda(t), u(t)) : t \geq 0 \}
$$
of the branch of minimal solutions $\{(\lambda, u_\lambda) : 0 < \lambda \leq \lambda_0 \}$, where $(\lambda(0), u(0))=(\lambda_0, u_{\lambda_0})$. 
By Lemma \ref{unique=24} and the fact that no solutions exist for $\lambda > \lambda^*$, we have $\mathcal{C} \subset [\lambda_0, \lambda^*] \times E$. It follows from the classical bifurcation theory (see Rabinowitz \cite[Theorem 6.2]{Ra}) that $\mathcal{C}$ is unbounded in $[\lambda_0, \lambda^*] \times E$. Thus, we can extract a desired sequence of pairs $(\lambda_j, u_j)$. 
\end{proof}

We are now ready to prove Theorem \ref{THMExi01}. 

\vskip0.10in  

\noindent{\it Proof of Theorem \ref{THMExi01}. }
Let $(\lambda_j, u_j)$ be the sequence in Lemma \ref{bifur=24}. Define
\[
m_j= \|u_j\|_{L^\infty(B_1)}=u_j(0)  ~~~~ \mbox{and} ~~~~ r_j= (m_j^{p-1} \lambda_j)^{\frac{1}{2\sigma - \alpha} }. 
\]
such that $ m_j, r_j \to \infty $ as $j \to \infty$. Set
\[
w_j(x) = m_j^{-1} u_j \left( \frac{x}{r_j} \right).
\]
Then $w_j(0)=1$, $0 \leq w_j \leq 1$ in $\mathbb{R}^n$ and $w_j$ satisfies 
\begin{equation}\label{24=fra=wjj}
\begin{cases}
(-\Delta)^\sigma w_j = |x|^{-\alpha} (m_j^{-1} + w_j)^p ~~~~ & \textmd{in} ~ B_{r_j}, \\
w_j \equiv 0~~~~ &\textmd{in} ~ \mathbb{R}^n \setminus B_{r_j}. 
\end{cases}
\end{equation}
Since $\alpha < 2\sigma$, we have $|x|^{-\alpha} \in L_{\textmd{loc}}^q (\mathbb{R}^n) $ for some $q > \frac{n}{2\sigma}$. By the regularity results in \cite{G,JLX17,RS3}, we know that $w_j \in C_{\textmd{loc}}^\gamma(\mathbb{R}^n)$ for some $\gamma \in (0, 1)$. Hence, after passing to a subsequence, there exists a nonnegative function $w \in C(\mathbb{R}^n)$ such that $w_j$ converges to $w$ locally uniformly in $\mathbb{R}^n$. Then, $w(0)=1$, $0 \leq w \leq 1$ and $w$ is radially symmetric and nonincreasing. Moreover, $w$ is a distributional solution of  
$$
(-\Delta)^\sigma w = |x|^{-\alpha}w^{p} ~~~~~~ \textmd{in} ~ \mathbb{R}^n. 
$$
The proof of Theorem \ref{THMExi01} is completed.   
\hfill$\square$ 

\vskip0.10in  

Finally, we apply the Kelvin transform and Theorem \ref{THMExi01} to prove Corollary \ref{24=La-Em-002}. 

\vskip0.10in  

\noindent{\it Proof of Corollary \ref{24=La-Em-002}. } 
Suppose $\frac{n}{n-2\sigma} < p < \frac{n + 2\sigma}{n - 2\sigma}$. Let 
\[
\beta = (n+2\sigma) - p(n-2\sigma) \in (0, 2\sigma), 
\]
and let $w$ be the positive and radially decreasing solution of $(-\Delta)^\sigma w = |x|^{-\beta}w^{p}$ obtained in Theorem \ref{THMExi01}.  
By Theorem \ref{THM=02}, $w$ also satisfies the integral equation 
\[
w(x)= C_{n,\sigma} \int_{\mathbb{R}^n} \frac{w(y)^{p}}{|y|^{\beta} |x-y|^{n-2\sigma}} dy~~~~~ \textmd{for} ~ x \in \mathbb{R}^n. 
\]
Hence, for any $r>0$ and $\theta \in \mathbb{S}^{n-1}$ we have  
$$
\aligned
w(r \theta) & \geq  C_{n,\sigma}  \int_{B_r} \frac{w(y)^p}{|y|^\beta |r\theta -y|^{n-2\sigma}} dy\\
& = C_{n,\sigma} w(r\theta)^{p} r^{2\sigma - \beta} \int_0^1 \left( \int_{ \mathbb{S}^{n-1} }  \frac{1}{|\theta -t \eta|^{n-2\sigma}} d\eta \right) t^{n-\beta-1} dt\\
&= C_{n,\sigma,p} w(r\theta)^{p} r^{2\sigma - \beta}
\endaligned
$$
for some constant $C_{n,\sigma,p} >0$. This implies that
\begin{equation}\label{24=Kel=ot97}
w(x) \leq C |x|^{\frac{\beta-2\sigma}{p-1}} = C |x|^{\frac{2\sigma}{p-1} - (n-2\sigma)} ~~~~~~ \textmd{for} ~ x \in \mathbb{R}^n \setminus \{0\}.  
\end{equation}
Remark that \eqref{24=Kel=ot97} can also be obtained directly from uniform estimates of \eqref{24=fra-au01} (see, e.g., \cite[Lemma 2.6]{AW1}).  
For any $\varepsilon>0$, we consider the rescaled function $w_\varepsilon(x) = \varepsilon w (\varepsilon^{\frac{p-1}{2\sigma-\beta}} x)$, which solves  
$$
\begin{cases}
(-\Delta)^\sigma w_\varepsilon = |x|^{-\beta} w_\varepsilon^{p} ~~~~~~  & \textmd{in} ~ \mathbb{R}^n, \\
w_\varepsilon(0)=\varepsilon, \\
w_\varepsilon(x) \leq C |x|^{\frac{2\sigma}{p-1} - (n-2\sigma)} ~~~~~~  & \textmd{in} ~ \mathbb{R}^n \setminus \{0\}. 
\end{cases}
$$
Then, its Kelvin transform $u_\varepsilon(x) = |x|^{-(n-2\sigma)} w_\varepsilon \big( \frac{x}{|x|^2} \big)$ satisfies
$$
\begin{cases}
(-\Delta)^\sigma u_\varepsilon =  u_\varepsilon^{p} ~~~~~~  & \textmd{in} ~ \mathbb{R}^n \setminus \{0\}, \\
u_\varepsilon(x) \leq C |x|^{-\frac{2\sigma}{p-1}} ~~~~~~  & \textmd{in} ~ \mathbb{R}^n \setminus \{0\}, \\
u_\varepsilon(x) \sim \varepsilon |x|^{- (n-2\sigma)} ~~~~~~  & \textmd{as} ~ |x| \to \infty. 
\end{cases}
$$
This finishes the proof. 
\hfill$\square$

\section{Radial symmetry of solutions}\label{S3}
In this section,  we first prove the following radial symmetry of nonnegative solutions to the integral equation 
\begin{equation}\label{Int=21xb0r1} 
u(x)= \int_{\mathbb{R}^n} \frac{u(y)^{p}}{|y|^{\alpha} |x-y|^{n-2\sigma}} dy, ~~~~~~ x \in \mathbb{R}^n \backslash \{0\}. 
\end{equation} 
Then we have  
\begin{theorem}\label{THM=03}
Let $0< \sigma  < n/2$ and $0\leq \alpha < 2\sigma$.  Suppose that $u \in C(\mathbb{R}^n \backslash \{0\})$ is a nonnegative solution of \eqref{Int=21xb0r1} with $p>0$, and $u$ has a non-removable singularity at the origin when $\alpha=0$ and $p=\frac{n+2\sigma}{n-2\sigma}$. 
\begin{itemize}
\item [$(1)$] If $0  <   p \leq \frac{n+2\sigma-2\alpha}{n-2\sigma}$,  then $u$ is radially symmetric and monotonically decreasing with respect to the origin.  

\item [$(2)$] If $\frac{n+2\sigma-2\alpha}{n-2\sigma} < p \leq \frac{n+2\sigma-\alpha}{n-2\sigma}$, then $u$ is radially symmetric with respect to the origin.  
\end{itemize} 
\end{theorem}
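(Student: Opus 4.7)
My plan is to apply the method of moving planes directly to the integral equation~\eqref{Int=21xb0r1}, exploiting the rotational invariance of the Hardy weight $|y|^{-\alpha}$, combined with a Kelvin-transform reduction to cover the supercritical range in part~(2). For any unit vector $e$, let $\Sigma_\lambda=\{x:x\cdot e<\lambda\}$ and denote by $x^\lambda$ the reflection of $x$ through $T_\lambda=\{x\cdot e=\lambda\}$. Splitting the integral equation across $T_\lambda$ and changing variables $y\mapsto y^\lambda$ in one of the resulting integrals, I would derive, for $x\in\Sigma_\lambda$,
$$u(x^\lambda)-u(x)=\int_{\Sigma_\lambda}\bigl[|x-y|^{2\sigma-n}-|x^\lambda-y|^{2\sigma-n}\bigr]\Big[\frac{u(y^\lambda)^p}{|y^\lambda|^\alpha}-\frac{u(y)^p}{|y|^\alpha}\Big]\,dy.$$
The first bracket is nonnegative because $|x-y|\leq|x^\lambda-y|$ for $x,y\in\Sigma_\lambda$. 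The identity $|y|^2-|y^\lambda|^2=4\lambda(y\cdot e-\lambda)$ shows that when $\lambda\leq 0$ and $y\in\Sigma_\lambda$, one has $|y|\geq|y^\lambda|$; together with $\alpha\geq 0$ this renders the Hardy weight favorable, in that $u(y)\leq u(y^\lambda)$ on $\Sigma_\lambda$ propagates to nonnegativity of the second bracket and hence to $u(x^\lambda)\geq u(x)$ via the identity.

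For part~(1), I would run the moving planes from $\lambda\to-\infty$ up to $\lambda=0$. The starting step uses the decay of $u$ at infinity furnished by $u\in\mathcal{L}_\sigma(\mathbb{R}^n)$ and the integral equation, together with a narrow-region Hardy-Littlewood-Sobolev estimate on $\Sigma_\lambda^-:=\{x\in\Sigma_\lambda:u(x^\lambda)<u(x)\}$ in the spirit of Lemma~\ref{S3InLem02}. Setting $\bar\lambda:=\sup\{\lambda\leq 0:u(x^\mu)\geq u(x)\text{ on }\Sigma_\mu,\,\forall\,\mu\leq\lambda\}$, the same HLS argument shows $\bar\lambda=0$ provided one can make $\||\cdot|^{-\alpha}\max\{u,u(\cdot^\lambda)\}^{p-1}\|_{L^{n/(2\sigma)}(\Sigma_\lambda^-)}$ small on thin strips; this is available precisely when $p\leq p^*_{\sigma,\alpha}$, as an analysis against the Fowler-type rate $|y|^{-(n-2\sigma)/2}$ shows, with the non-removable-singularity hypothesis at $(\alpha=0,\,p=(n+2\sigma)/(n-2\sigma))$ excluding the degenerate fixed-point solution. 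Running symmetrically from $+\infty$ yields the reverse inequality at $\lambda=0$, whence $u(x)=u(x^0)$ for $x\cdot e<0$; since $e$ is arbitrary, $u$ is radial. Monotonicity then follows: for $0<r_1<r_2$ on a ray along $e$, choose $\lambda=(r_1-r_2)/2<0$, observe that $(-r_2 e)^\lambda=r_1 e$, and combine the moving-plane inequality with radial symmetry to conclude $u(r_1 e)\geq u(-r_2 e)=u(r_2 e)$.

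For part~(2), $p>p^*_{\sigma,\alpha}$ exceeds the direct HLS threshold, so I would instead apply the Kelvin transform at the origin, $\tilde u(x):=|x|^{-(n-2\sigma)}u(x/|x|^2)$. A direct computation shows $\tilde u$ satisfies an integral equation of the same form but with $\alpha$ replaced by $\tilde\alpha:=(n+2\sigma)-\alpha-p(n-2\sigma)$; moreover $p\leq p^*_{\sigma,\tilde\alpha}$ is equivalent to $p\geq p^*_{\sigma,\alpha}$, and $\tilde\alpha\geq 0$ is equivalent to $p\leq(n+2\sigma-\alpha)/(n-2\sigma)$. Thus the transformed problem falls exactly under the hypotheses of part~(1), yielding radial symmetry of $\tilde u$ and, by pulling back, of $u$; monotonicity is not preserved under Kelvin inversion, which is why only symmetry is asserted. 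The principal obstacle throughout is closing the narrow-region HLS estimate near the possibly singular origin, where $|y|^{-\alpha}u(y)^{p-1}$ may blow up: the thresholds $p^*_{\sigma,\alpha}$ and $(n+2\sigma-\alpha)/(n-2\sigma)$ are precisely the borderlines at which the required $L^{n/(2\sigma)}$-integrability against the Fowler rate passes from finite to infinite.
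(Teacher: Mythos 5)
You propose the method of moving \emph{planes}, whereas the paper uses the method of moving \emph{spheres} (Lemmas~\ref{LeM=001}, \ref{S67_Le=001}, \ref{LeM=002}), but the two methods do not differ merely by a change of coordinates here, and the gaps in your argument are precisely at the points where this difference matters.

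First, your starting-step and narrow-region arguments both rest on making $\big\||\cdot|^{-\alpha}\max\{u,u(\cdot^\lambda)\}^{p-1}\big\|_{L^{n/(2\sigma)}}$ small, but this quantity need not even be finite in the regime covered by the theorem. The integral equation forces $u(y)\asymp|y|^{2\sigma-n}$ at infinity, so $|y|^{-\alpha}u(y)^{p-1}\asymp|y|^{-\alpha-(n-2\sigma)(p-1)}$, which lies in $L^{n/(2\sigma)}$ near infinity iff $p>\frac{n-\alpha}{n-2\sigma}$. Theorem~\ref{THM=03}(1) allows every $p>0$, so for the entire subrange $0<p\le\frac{n-\alpha}{n-2\sigma}$ your HLS argument has nothing to bite on; this is exactly why the paper replaces HLS at the start-up by a pointwise $|\nabla\ln u|$ estimate plus a Fatou-type decay bound (Lemma~\ref{LeM=001}), and at the closing step by the explicit perturbation bound involving $K(x,\lambda;y,z)$ and $|z-x|-\lambda$ rather than an HLS smallness (Lemma~\ref{LeM=002}).

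Second, your claim that the narrow-region estimate ``is available precisely when $p\le p^*_{\sigma,\alpha}$'' is not the correct source of the critical exponent and is not borne out by the computation you gesture at: at $p=p^*_{\sigma,\alpha}$ with the Fowler rate $|y|^{-(n-2\sigma)/2}$ one gets $|y|^{-\alpha}u^{p-1}\asymp|y|^{-2\sigma}$, which \emph{fails} to be in $L^{n/(2\sigma)}$ near the origin, so the claimed borderline is on the wrong side. In the paper the exponent $p^*_{\sigma,\alpha}$ enters through the Jacobian factor $\left(\frac{\lambda}{|z-x|}\right)^{n+2\sigma-p(n-2\sigma)}$ that the Kelvin transform introduces in \eqref{ABC-01}--\eqref{ABC-0hy2}; the requirement $n+2\sigma-p(n-2\sigma)\ge 2\alpha$ (equivalently $p\le p^*_{\sigma,\alpha}$) is what combines with Lemma~\ref{S67_Le=001} to make the sphere comparison favorable. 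Your planar reflection is measure-preserving and carries no such factor, so if your argument were otherwise complete it would never see this threshold at all — which is a sign that the restriction must reenter elsewhere, through exactly the integrability obstructions you have glossed over.

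The Kelvin-inversion reduction you use for part~(2), with $\theta=n+2\sigma-\alpha-p(n-2\sigma)$ and the equivalences $\theta\ge 0\iff p\le\frac{n+2\sigma-\alpha}{n-2\sigma}$ and $p\le p^*_{\sigma,\theta}\iff p\ge p^*_{\sigma,\alpha}$, does match the paper exactly, so that part is fine modulo the part~(1) gaps above. To salvage the moving-plane route you would need, in place of HLS, a local gradient/monotonicity argument near the moving plane (analogous to Step 1 of Lemma~\ref{LeM=001}), together with a Fatou-based quantitative decay bound to start the planes from $-\infty$, and a pointwise perturbation estimate in the closing step that does not require $|\cdot|^{-\alpha}u^{p-1}\in L^{n/(2\sigma)}$; as written, the proposal does not supply these.
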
  

\br\label{dkfi904}
When $\alpha=0$, Theorem \ref{THM=03} has been proved by Chen-Li-Ou \cite{CLO05} via the method of moving planes in an integral form. Here we will apply the method of moving spheres in  Li, Zhang and Zhu \cite{Li04,LZ03,Li-Zhu} to show Theorem \ref{THM=03} for all $\alpha \in [0, 2\sigma)$. 
\er

For $x\in \mathbb{R}^n, \lambda >0$ and a nonnegative  function $u$,  we define the Kelvin transform of $u$ with respect to the ball $B_\lambda(x)$ by
\begin{equation}\label{N0Ta-0022}
u_{x, \lambda}(\xi) = \left( \frac{\lambda}{|\xi - x|} \right)^{n-2\sigma} u(\xi^{x, \lambda}), 
\end{equation}
where 
\begin{equation}\label{N0Ta-0021}
\xi^{x, \lambda}= x+ \frac{\lambda^2 (\xi - x)}{|\xi - x|^2} ~~~~~  \textmd{for} ~  \xi\neq x.  
\end{equation}  
One can check that $(\xi^{x,\lambda})^{x, \lambda} =\xi$ and $(u_{x,\lambda})_{x,\lambda} \equiv u$.  If $x=0$, we use the notation $u_\lambda=u_{0,\lambda}$ and  $\xi^{\lambda}=\xi^{0, \lambda}$ as in Section \ref{S2=0hy}.   Making the following change of variables
$$
y=z^{x, \lambda} =x+ \frac{\lambda^2 (z - x)}{|z - x|^2}, 
$$
we have
$$
\aligned
\int_{|y -x| \geq \lambda} \frac{|y|^{-\alpha} u(y)^{p}}{|\xi^{x,\lambda} - y|^{n-2\sigma}} dy & = \int_{|z-x| \leq \lambda} \frac{|z^{x, \lambda}|^{-\alpha} u(z^{x, \lambda})^{p}}{|\xi^{x,\lambda} - z^{x, \lambda}|^{n-2\sigma}} \left( \frac{\lambda}{|z-x|} \right)^{2n} dz \\
& = \left( \frac{\lambda}{|\xi -x|} \right)^{-(n-2\sigma)} \int_{|z - x|\leq \lambda} \frac{|z^{x, \lambda}|^{-\alpha} u_{x,\lambda}(z)^p}{|\xi - z|^{n-2\sigma}} \left( \frac{\lambda}{|z-x|} \right)^{n+2\sigma - p(n-2\sigma)} dz, 
\endaligned
$$
where we have used the fact   
$$
 \frac{|z-x|}{\lambda} \frac{|\xi -x|}{\lambda} |\xi^{x,\lambda} - z^{x, \lambda}| = |\xi - z|.  
$$
Thus, we obtain 
\begin{equation}\label{Id-01}
\left( \frac{\lambda}{|\xi - x|} \right)^{n-2\sigma} \int_{|y - x|\geq \lambda}  \frac{|y|^{-\alpha} u(y)^{p}}{|\xi^{x,\lambda} - y|^{n-2\sigma}} dy  = \int_{|z - x|\leq \lambda} \frac{|z^{x, \lambda}|^{-\alpha} u_{x,\lambda}(z)^p}{|\xi - z|^{n-2\sigma}} \left( \frac{\lambda}{|z-x|} \right)^{n+2\sigma - p(n-2\sigma)} dz. 
\end{equation}
Similarly, we also have 
\begin{equation}\label{Id-02}
\left( \frac{\lambda}{|\xi - x|} \right)^{n-2\sigma} \int_{|y - x| \leq \lambda}  \frac{|y|^{-\alpha} u(y)^{p}}{|\xi^{x,\lambda} - y|^{n-2\sigma}} dy  = \int_{|z - x|\geq \lambda} \frac{|z^{x, \lambda}|^{-\alpha} u_{x,\lambda}(z)^p}{|\xi - z|^{n-2\sigma}} \left( \frac{\lambda}{|z-x|} \right)^{n+2\sigma - p(n-2\sigma)} dz.  \end{equation} 
Let $u \in C(\mathbb{R}^n \backslash \{0\})$ be a nonnegative solution of \eqref{Int=21xb0r1}. Then,  using \eqref{Id-01} and \eqref{Id-02}  we get 
\begin{equation}\label{ABC-01}
u_{x,\lambda}(\xi) = \int_{\mathbb{R}^n} \frac{|z^{x, \lambda}|^{-\alpha} u_{x,\lambda}(z)^p}{|\xi - z|^{n-2\sigma}} \left( \frac{\lambda}{|z-x|} \right)^{n+2\sigma - p(n-2\sigma)} dz 
\end{equation}
and
\begin{equation}\label{ABC-0hy2}
u(\xi) -u_{x,\lambda}(\xi) =\int_{|z-x|\geq \lambda} K(x, \lambda; \xi, z) \left[  \frac{u(z)^p}{|z|^{\alpha}}  - \left( \frac{\lambda}{|z-x|} \right)^{n+2\sigma - p(n-2\sigma)} \frac{u_{x,\lambda}(z)^p}{|z^{x,\lambda}|^{\alpha}}  \right] dz,   
\end{equation} 
where
$$
K(x, \lambda; \xi, z)=\frac{1}{|\xi -z|^{n-2\sigma}} - \left(\frac{\lambda}{|\xi - x|}  \right)^{n-2\sigma} \frac{1}{|\xi^{x,\lambda} -z|^{n-2\sigma}}. 
$$
It is elementary to check that   
$$
K(x, \lambda; \xi, z) >0~~~~~ \textmd{for} ~ \textmd{all} ~ |\xi -x|, |z-x| >\lambda>0.  
$$

We first prove the following result, which allows us to move the spheres from a starting point. 

\begin{lemma}\label{LeM=001}
Suppose that  $0< \sigma  < n/2$ and $-\infty <  \alpha < 2\sigma$.  Let $u \in C(\mathbb{R}^n \backslash \{0\})$ be a positive solution of \eqref{Int=21xb0r1} with  $p >0$.  Then for every $x \in \mathbb{R}^{n} \backslash \{0\}$, there exists a real number $\lambda_2 \in (0, |x|)$ such that for any $0< \lambda < \lambda_2$, we have
\begin{equation}\label{5SC101}
u_{x,\lambda} (y) \leq u(y) ~~~~~ \forall ~ |y-x| \geq \lambda,~ y \ne 0, 
\end{equation}
where $u_{x, \lambda}$ is the Kelvin transform  of $u$ as defined in \eqref{N0Ta-0022}.  
\end{lemma}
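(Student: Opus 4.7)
My plan is to adapt the proof of Lemma \ref{S3oLeM=001} to spheres centered at $x \neq 0$. The key geometric observation is that by choosing $r_0 < |x|/2$ and $\lambda < r_0$, the narrow region $\{\lambda \leq |y-x| \leq r_0\}$ stays uniformly bounded away from the origin, so the two weights $|z|^{-\alpha}$ and $|z^{x,\lambda}|^{-\alpha}$ appearing in \eqref{ABC-0hy2} are bounded and comparable (each $\asymp |x|^{-\alpha}$) and the scaling factor $(\lambda/|z-x|)^{n+2\sigma-p(n-2\sigma)}$ is bounded as well. This lets us carry over the Hardy--Littlewood--Sobolev (HLS) argument of Lemma \ref{S3oLeM=001}.

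The proof splits into an outer and an inner estimate. For the outer estimate, I would first establish lower bounds on $u$: applying Fatou's lemma to \eqref{Int=21xb0r1} yields $\liminf_{|y|\to\infty}|y|^{n-2\sigma}u(y) \geq \int_{\mathbb{R}^n} u(z)^p/|z|^\alpha\,dz > 0$, while restricting the integral in \eqref{Int=21xb0r1} to an annulus bounded away from the origin where $u$ is positive yields a uniform positive lower bound for $u$ in any bounded neighborhood of the origin with $\{0\}$ removed. Together with the continuity and positivity of $u$ on $\mathbb{R}^n\backslash\{0\}$, this gives, for each $r_0>0$, a constant $c(r_0)>0$ with $u(y) \geq c(r_0)(1+|y|)^{-(n-2\sigma)}$ on $\{|y-x|\geq r_0\}\backslash\{0\}$. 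Fixing $r_0 \in (0,|x|/2)$ small (to be tuned), the image $y^{x,\lambda}$ lies in $\overline{B_\lambda(x)} \subset \overline{B_{|x|/2}(x)}$, so $u(y^{x,\lambda}) \leq M := \sup_{\overline{B_{|x|/2}(x)}} u < \infty$ and $u_{x,\lambda}(y) \leq M\lambda^{n-2\sigma}/r_0^{n-2\sigma}$ on $\{|y-x|\geq r_0\}$; comparing with the lower bound just obtained (splitting into $|y|\leq 2|x|$ and $|y|\geq 2|x|$) produces $\lambda_1=\lambda_1(r_0,x)\in(0,r_0)$ such that $u_{x,\lambda}\leq u$ on $\{|y-x|\geq r_0\}\backslash\{0\}$ for all $0<\lambda<\lambda_1$.

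For the inner step, set $w^{x,\lambda} := u - u_{x,\lambda}$ and $B_{x,\lambda}^- := \{y : |y-x|>\lambda,\, y\neq 0,\, w^{x,\lambda}(y)<0\}$; by the outer step, $B_{x,\lambda}^- \subset B_{r_0}(x)\backslash B_\lambda(x)$. Starting from \eqref{ABC-0hy2} and arguing as in Lemma \ref{S3oLeM=001}, I would bound the negative part of the integrand from below on $B_{x,\lambda}^-$ by $c\,|z|^{-\alpha}\max\{u^{p-1},u_{x,\lambda}^{p-1}\}\,w^{x,\lambda}(z)$, and then invoke HLS to obtain, for any $q > n/(n-2\sigma)$,
\begin{equation*}
\|w^{x,\lambda}\|_{L^q(B_{x,\lambda}^-)} \leq C\,\bigl\||z|^{-\alpha}\max\{u^{p-1},u_{x,\lambda}^{p-1}\}\bigr\|_{L^{n/(2\sigma)}(B_{r_0}(x))}\,\|w^{x,\lambda}\|_{L^q(B_{x,\lambda}^-)}.
\end{equation*}
On $B_{r_0}(x)$ the weight $\max\{u^{p-1},u_{x,\lambda}^{p-1}\}$ is uniformly bounded in $\lambda$ and $\||z|^{-\alpha}\|_{L^{n/(2\sigma)}(B_{r_0}(x))} \leq C\,r_0^{2\sigma}$, so shrinking $r_0$ in advance forces the prefactor below $1/2$, yielding $B_{x,\lambda}^-=\emptyset$ and the lemma with $\lambda_2 := \lambda_1(r_0,x)$.

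The main technical obstacle I anticipate is the very first step of the HLS estimate, namely bounding the integrand in \eqref{ABC-0hy2} from below on $B_{x,\lambda}^-$. Unlike the $x=0$ case of Lemma \ref{S3oLeM=001}, the two weights $|z|^{-\alpha}$ and $|z^{x,\lambda}|^{-\alpha}$ do not combine cleanly, and the exponent $n+2\sigma-p(n-2\sigma)$ can have either sign depending on whether $p$ is smaller or larger than $(n+2\sigma)/(n-2\sigma)$. This is absorbed by the geometry of the narrow region: both $|z|$ and $|z^{x,\lambda}|$ are then comparable to $|x|$, so the weight mismatch is only a bounded multiplicative factor, while $\lambda/|z-x|\in[\lambda/r_0,1]$ keeps the scaling factor bounded uniformly in $\lambda\in(0,r_0)$, so that after a mean-value-theorem step the integrand is controlled by a bounded multiple of $|z|^{-\alpha}\max\{u^{p-1},u_{x,\lambda}^{p-1}\}\,w^{x,\lambda}(z)$ exactly as in the centered case.
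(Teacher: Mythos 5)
Your outer step matches the paper's Step 2 in spirit, but the inner step replaces the paper's argument with an HLS iteration modeled on Lemma \ref{S3oLeM=001}, and that replacement has a genuine gap. The HLS argument works in Lemma \ref{S3oLeM=001} because the integral identity \eqref{IntABC-0hy2} (the $x=0$ version) has the clean weight $(\lambda/|z|)^{n+2\sigma-2\alpha-p(n-2\sigma)}|z|^{-\alpha}$, and the inequality $(\lambda/|z|)^{n+2\sigma-2\alpha-p(n-2\sigma)} \le 1$ on $\{|z|\ge\lambda\}$ — which is exactly the hypothesis $p<\frac{n+2\sigma-2\alpha}{n-2\sigma}$ of that lemma — is what lets one drop the integrand outside $B_\lambda^-$ and bound it below inside. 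In the off-center identity \eqref{ABC-0hy2} the analogous inequality you need, in order to reduce the bracket to $|z|^{-\alpha}(u(z)^p-u_{x,\lambda}(z)^p)$, is
\begin{equation*}
\left(\frac{\lambda}{|z-x|}\right)^{n+2\sigma-p(n-2\sigma)}\frac{1}{|z^{x,\lambda}|^\alpha}\ \le\ \frac{1}{|z|^\alpha}
\qquad\text{for all } |z-x|\ge\lambda,
\end{equation*}
and the paper establishes this (via Lemma \ref{S67_Le=001}) only under $0\le\alpha<2\sigma$ and $p\le\frac{n+2\sigma-2\alpha}{n-2\sigma}$ — in the proof of Lemma \ref{LeM=002}, not Lemma \ref{LeM=001}. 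But Lemma \ref{LeM=001} is stated for all $p>0$ and all $-\infty<\alpha<2\sigma$, and this generality is used downstream (e.g., the Kelvin inversion in the proof of Theorem \ref{THM=03}, and possibly $\alpha<0$ after transformation). Your remark that the difficulty is ``absorbed by the geometry of the narrow region'' does not resolve this: the integral in \eqref{ABC-0hy2} runs over the entire exterior $\{|z-x|\ge\lambda\}$, and for $p>\frac{n+2\sigma-2\alpha}{n-2\sigma}$ or $\alpha<0$ the far-field part of the bracket is not manifestly non-negative even where $u\ge u_{x,\lambda}$, so you cannot simply discard it.

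The paper sidesteps all of this by using regularity instead of the integral identity for the inner step. By \cite[Theorem 2.5]{JLX17}, $u\in C^1(\mathbb{R}^n\backslash\{0\})$, so $|\nabla\ln u|\le C_1$ on $B_{|x|/2}(x)$, and then the elementary computation
\begin{equation*}
\frac{d}{dr}\bigl(r^{\frac{n-2\sigma}{2}}u(x+r\theta)\bigr)=r^{\frac{n-2\sigma}{2}-1}u(x+r\theta)\left(\frac{n-2\sigma}{2}-r\frac{\nabla u\cdot\theta}{u}\right)>0
\end{equation*}
for $0<r<\lambda_1:=\min\{\frac{n-2\sigma}{2C_1},\frac{|x|}{2}\}$ gives $u_{x,\lambda}\le u$ on $\{\lambda\le|y-x|\le\lambda_1\}$ directly, with no constraint on $p$ or $\alpha$. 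This monotonicity argument is the missing idea; it makes no reference to the nonlinearity or the Kelvin-transformed integral equation and therefore holds in the full generality of the lemma, which the HLS route cannot.
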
 
\begin{proof}
The proof of Lemma \ref{LeM=001} consists of two steps. 

{\it Step 1. There exists $0< \lambda_1 < |x|$ such that for any $0< \lambda<\lambda_1$, 
\begin{equation}\label{5S0-0-1}
u_{x,\lambda} (y) \leq u(y)~~~~~~ \forall ~0< \lambda\leq  |y - x| \leq \lambda_1.
\end{equation} }
By  \cite[Theorem 2.5]{JLX17}, we have $u \in C^1({\mathbb{R}^n \backslash \{0\} })$.  Suppose that 
$$
|\nabla \ln u| \leq C_1 ~~~~~ \textmd{in} ~ B_{|x|/2}(x)
$$  
for some constant $C_1>0$.  Then  we have
\begin{equation}\label{Mono01}
\aligned
\frac{d}{dr} (r^{\frac{n-2\sigma}{2}} u(x + r\theta)) &= r^{\frac{n-2\sigma}{2} -1} u(x + r\theta) \left( \frac{n-2\sigma}{2} - r \frac{\nabla u \cdot \theta}{u}\right) \\
&\geq r^{\frac{n-2\sigma}{2} -1} u(x + r\theta) \left( \frac{n-2\sigma}{2} - C_1 r \right) >0
\endaligned
\end{equation}
for all $0 < r < \lambda_1 :=\min\{ \frac{n-2\sigma}{2C_1}, \frac{|x|}{2}\}$ and $\theta \in \mathbb{S}^{n-1}$. For any  $0< \lambda < |y - x| \leq \lambda_1$, let $\theta=\frac{y-x}{|y-x|}$, $r_1 = |y - x|$ and $r_2=\frac{\lambda^2 r_1}{|y - x|^2}$. Applying  \eqref{Mono01}  we obtain 
$$
r_2^{\frac{n-2\sigma}{2}} u(x + r_2\theta) < r_1^{\frac{n-2\sigma}{2}} u(x + r_1 \theta). 
$$
This is equivalent to 
$$
u_{x,\lambda} (y) \leq u(y), ~~~~~~0< \lambda\leq  |y - x| \leq \lambda_1.
$$

{\it Step 2.  There exists  $0< \lambda_2 <  \lambda_1  <|x| $ such that \eqref{5SC101} holds for all $0< \lambda < \lambda_2$. } By Fatou lemma we get 
$$
 \liminf_{ |x| \to \infty }   |x|^{n-2\sigma} u(x) = \liminf_{ |x| \to \infty}   \int_{\mathbb{R}^n} \frac{|x|^{n-2\sigma} u(y)^{p} }{|y|^{\alpha} |x - y|^{n-2\sigma}} dy  \geq \int_{\mathbb{R}^n} \frac{u(y)^{p}}{|y|^\alpha} dy >0. 
$$
Hence,   there exist two constants $c_0, R_0>0$ such that
\begin{equation}\label{5SC102}
u(y) \geq \frac{c_0}{|y|^{n-2\sigma}}~~~~~~~ \textmd{for} ~  \textmd{all} ~  |y|\geq R_0. 
\end{equation}
For $y \in B_{R_0} \backslash \{0\}$, by the equation \eqref{Int=21xb0r1} and positivity of $u$ we have 
$$
u(y) \geq \int_{B_{R_0}} \frac{u(z)^p }{|y|^\alpha |y - z|^{n-2\sigma}} dz \geq (2R_0)^{2\sigma -n} \int_{B_{R_0}} \frac{u(z)^{p}}{|y|^\alpha} dz  >0.
$$
This, together with \eqref{5SC102}, implies that there exists $c_1>0$ such that 
$$
u(y) \geq \frac{c_1}{|y - x|^{n-2\sigma}} ~~~~~ \forall ~ |y - x| \geq \lambda_1, ~ y \ne 0. 
$$
Thus, for sufficiently small  $\lambda_2 \in (0, \lambda_1 )$ and for any $0< \lambda< \lambda_2$, we have 
$$
\aligned
u_{x,\lambda}(y) & = \left( \frac{\lambda}{|y - x|} \right)^{n-2\sigma} u \left( x + \frac{\lambda^2(y - x)}{ |y- x|^2 } \right) \\
& \leq \left( \frac{\lambda_2}{|y - x|} \right)^{n-2\sigma} \sup_{B_{\lambda_1}(x)} u \leq u(y), ~~~ \forall ~ |y-x| \geq \lambda_1, ~ y \ne 0.  
\endaligned 
$$
From \eqref{5S0-0-1} and the above estimate,  the proof of Lemma \ref{LeM=001} is completed.   
\end{proof}

For any $x \in \mathbb{R}^n \backslash \{0\}$,  we define 
$$
\bar{\lambda}(x):= \sup\{ 0< \mu \leq |x|~ | ~ u_{x,\lambda}(y) \leq u(y),~ \forall ~ |y - x| \geq \lambda,~ y \ne 0, ~ \forall ~ 0< \lambda<\mu \}. 
$$
By Lemma \ref{LeM=001}, $\bar{\lambda}(x)$ is well defined and $\bar{\lambda}(x)>0$.  Next we will show that $\bar{\lambda}(x) =|x|$ for all $x \in \mathbb{R}^{n} \backslash \{0\}$.  Before proving this,  we need the following lemma which is similar to \cite[Lemma 4]{CaLi} and \cite[Lemma 2.1]{Jin}. 
We include its proof here for completeness.    
\begin{lemma}\label{S67_Le=001} 
For $\lambda \in (0, |x|)$,  we have 
\begin{equation}\label{Syh_Le=00234}
\frac{\lambda^2 |z|}{|z-x|^2  \Big| x +  \frac{\lambda^2(z-x)}{|z-x|^2} \Big|} < 1
\end{equation}
for any $z \in \mathbb{R}^n$ satisfying $|z -x| >\lambda$.   
\end{lemma}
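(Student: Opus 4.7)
The plan is to clear the denominator, square, and factor. Writing $u := z - x$ and $y := x + \lambda^2 u/|u|^2$ for the inverted image of $z$ about $\partial B_\lambda(x)$, the target \eqref{Syh_Le=00234} reads $\lambda^2 |z| < |u|^2 |y|$, which (provided $y\ne 0$) is equivalent to the polynomial inequality $\lambda^4 |z|^2 < |u|^4 |y|^2$. Before squaring, I would dispose of the divide-by-zero case: if $y=0$ then $x$ and $u$ must be antiparallel with $|u|=\lambda^2/|x|$, and this quantity is strictly less than $\lambda$ by the hypothesis $\lambda<|x|$, contradicting $|u|>\lambda$. Hence $y\ne 0$ automatically on the region of interest.

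Next I would expand $|y|^2 = |x|^2 + 2\lambda^2 (x\cdot u)/|u|^2 + \lambda^4/|u|^2$ and $|z|^2 = |u|^2 + 2\,x\cdot u + |x|^2$. A direct rearrangement (collecting the $|x|^2$ terms and the $(x\cdot u)$ terms separately) should give the clean factorization
\[
|u|^4 |y|^2 - \lambda^4 |z|^2 = \bigl(|u|^2-\lambda^2\bigr)\bigl[|x|^2(|u|^2+\lambda^2) + 2\lambda^2\,(x\cdot u)\bigr].
\]
The first factor is strictly positive by the hypothesis $|u|>\lambda$, so the entire claim reduces to positivity of the bracketed factor.

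For the bracket, I would apply Cauchy--Schwarz in the form $x\cdot u \geq -|x||u|$ to obtain
\[
|x|^2(|u|^2+\lambda^2) + 2\lambda^2(x\cdot u) \geq |x|\bigl[|x|(|u|^2+\lambda^2) - 2\lambda^2|u|\bigr],
\]
and then chain together the two strict estimates $|x|>\lambda$ (hypothesis) and $|u|^2+\lambda^2 > 2\lambda|u|$ (AM--GM, strict since $|u|\neq \lambda$) to conclude
\[
|x|\bigl[|x|(|u|^2+\lambda^2) - 2\lambda^2|u|\bigr] > |x|\,\lambda\,(|u|-\lambda)^2 > 0.
\]
The only genuinely substantive step is spotting the factorization; once it is in hand, the rest is two applications of elementary inequalities and nothing geometric is needed beyond the inversion identity already built into the definition of $y$.
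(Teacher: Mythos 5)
Your proof is correct and takes essentially the same route as the paper: square to clear the (nonnegative) absolute value, factor out $|z-x|^2-\lambda^2>0$, and reduce to the dot-product inequality $|x|^2(|z-x|^2+\lambda^2)+2\lambda^2\langle x, z-x\rangle>0$, which both you and the paper settle by Cauchy--Schwarz together with $|x|>\lambda$ and the strict AM--GM bound $|z-x|^2+\lambda^2>2\lambda|z-x|$. Your write-up is marginally more careful in ruling out $y=0$ explicitly, but the substance is identical.
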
 

\begin{proof} 
For any $z \in \mathbb{R}^n$ with $|z -x| >\lambda$, we have  that 
$$
\aligned
\eqref{Syh_Le=00234} & \Leftrightarrow  \lambda^2 |z| < |z-x|^2  \Big| x +  \frac{\lambda^2(z-x)}{|z-x|^2} \Big|  \\
& \Leftrightarrow  \lambda^2 |z| < \big|  (|z-x|^2  - \lambda^2) x + \lambda^2 z  \big| \\
& \Leftrightarrow  -2\lambda^2 \langle  z, x \rangle  <  (|z-x|^2 - \lambda^2) |x|^2 ~~~ (\textmd{by} ~\textmd{taking}~  \textmd{square}) \\ 
& \Leftrightarrow  -2\lambda^2 \langle  (z - x), x \rangle  <  (|z-x|^2 + \lambda^2) |x|^2. 
\endaligned
$$
The last inequality holds since  
$$
-2\lambda^2 \langle  (z - x), x \rangle \leq 2 \lambda^2|z - x| |x| < 2 \lambda |z-x| |x|^2 < (|z-x|^2 + \lambda^2) |x|^2. 
$$
Lemma \ref{S67_Le=001} is established.  
\end{proof}  

\begin{lemma}\label{LeM=002}
Suppose that  $0< \sigma  < n/2$ and $0 \leq  \alpha < 2\sigma$.  Let $u \in C(\mathbb{R}^n \backslash \{0\})$ be a positive solution of \eqref{Int=21xb0r1} with  $0 < p \leq \frac{n+2\sigma-2\alpha}{n-2\sigma}$.  Then $\bar{\lambda}(x) =|x|$ for all $x \in \mathbb{R}^{n} \backslash \{0\}$.
\end{lemma}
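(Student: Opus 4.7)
The plan is to argue by contradiction: assume $\bar\lambda:=\bar\lambda(x)<|x|$ and show the moving-sphere inequality extends slightly past $\bar\lambda$, contradicting its maximality. The first step is to put \eqref{ABC-0hy2} into a clean comparison form. Since $\bar\lambda<|x|$, Lemma \ref{S67_Le=001} yields $|z^{x,\lambda}|^{-\alpha}\leq(|z-x|/\lambda)^{2\alpha}|z|^{-\alpha}$ for $|z-x|>\lambda$ (using $\alpha\geq 0$), and the subcriticality assumption $p\leq (n+2\sigma-2\alpha)/(n-2\sigma)$ guarantees $(\lambda/|z-x|)^{n+2\sigma-2\alpha-p(n-2\sigma)}\leq 1$ on the same set. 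Combining these, \eqref{ABC-0hy2} reduces to
\begin{equation*}
u(\xi)-u_{x,\lambda}(\xi)\geq \int_{|z-x|\geq\lambda}K(x,\lambda;\xi,z)\,\frac{u(z)^p-u_{x,\lambda}(z)^p}{|z|^\alpha}\,dz
\end{equation*}
for every $|\xi-x|\geq\lambda$ with $\xi\neq 0$ and every $\lambda\in(0,|x|)$.

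Next I would establish the strict separation $u>u_{x,\bar\lambda}$ on $\{|y-x|>\bar\lambda,\ y\neq 0\}$. Continuity and the definition of $\bar\lambda$ give $u\geq u_{x,\bar\lambda}$ there, the integrand above is non-negative, and $K(x,\bar\lambda;\xi,z)>0$ on the integration region. Either strict inequality holds everywhere, or equality at one point forces $u\equiv u_{x,\bar\lambda}$ on $\{|y-x|\geq\bar\lambda\}\setminus\{0\}$, and hence on all of $\mathbb{R}^n\setminus\{0,0^{x,\bar\lambda}\}$ by the involutive property of the Kelvin transform. The latter must be excluded. In the conformally invariant case $\alpha=0$, $p=(n+2\sigma)/(n-2\sigma)$, the non-removable singularity hypothesis does the job directly: for $\xi\to 0^{x,\bar\lambda}\in\mathbb{R}^n\setminus\{0\}$ one has $\xi^{x,\bar\lambda}\to 0$, so $u(\xi)=(\bar\lambda/|\xi-x|)^{n-2\sigma}u(\xi^{x,\bar\lambda})\to+\infty$, contradicting $u\in C(\mathbb{R}^n\setminus\{0\})$ at $0^{x,\bar\lambda}$. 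In all other cases, inserting $u\equiv u_{x,\bar\lambda}$ into \eqref{Int=21xb0r1} and comparing with \eqref{ABC-01} gives, via injectivity of the Riesz potential applied to $u^p$ times the resulting weight difference, the pointwise identity $|z|^{-\alpha}=|z^{x,\bar\lambda}|^{-\alpha}(\bar\lambda/|z-x|)^{n+2\sigma-p(n-2\sigma)}$; sending $z\to\infty$ along the ray through $x$ (so that $|z^{x,\bar\lambda}|\to|x|$) makes the two sides decay at rates $|z|^{-\alpha}$ and $|z|^{-(n+2\sigma-p(n-2\sigma))}$, which differ throughout the range $0<p\leq (n+2\sigma-2\alpha)/(n-2\sigma)$ outside the critical case.

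The final step is the familiar two-region extension argument. Fatou's lemma applied to the comparison inequality, together with the strict inequality just established, yields
\begin{equation*}
\liminf_{|\xi|\to\infty}|\xi|^{n-2\sigma}\bigl(u(\xi)-u_{x,\bar\lambda}(\xi)\bigr)\geq \int_{|z-x|\geq\bar\lambda}\Bigl[1-(\bar\lambda/|z-x|)^{n-2\sigma}\Bigr]\,\frac{u(z)^p-u_{x,\bar\lambda}(z)^p}{|z|^\alpha}\,dz>0,
\end{equation*}
so $u-u_{x,\bar\lambda}\geq c_0|y|^{-(n-2\sigma)}$ for large $|y|$, and by compactness a similar uniform positivity holds on any compact subset of $\{|y-x|>\bar\lambda,\ y\neq 0\}$. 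Both lower bounds persist (with slightly smaller constants) for $u-u_{x,\lambda}$ once $|\lambda-\bar\lambda|$ is small, confining the negativity set $\Sigma_\lambda^-:=\{|y-x|>\lambda,\ y\neq 0:\ u(y)<u_{x,\lambda}(y)\}$ to a narrow spherical shell $\bar\lambda\leq|y-x|\leq\bar\lambda+\delta$ with $\delta$ small. On this shell the Hardy--Littlewood--Sobolev contraction of Lemma \ref{S3InLem02} applies verbatim (with $\alpha<2\sigma$ guaranteeing $|\cdot|^{-\alpha}\in L^{n/(2\sigma)}$ locally away from the origin, and with $\min\{u,u_{x,\lambda}\}$ bounded below by a positive constant on the shell handling the case $0<p<1$), forcing $\Sigma_\lambda^-=\emptyset$ for all $\lambda\in[\bar\lambda,\bar\lambda+\varepsilon]$, which contradicts the maximality of $\bar\lambda$.

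The principal obstacle is the critical conformally invariant case, where the scaling argument that rules out $u\equiv u_{x,\bar\lambda}$ degenerates; it is precisely here that the non-removable singularity hypothesis becomes indispensable, through the Kelvin-transport of the singularity at $0$ to the interior point $0^{x,\bar\lambda}\neq 0$.
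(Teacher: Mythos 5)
Your proposal follows the same broad strategy as the paper (contradiction, strict improvement at $\bar\lambda$, narrow-shell continuation), but it departs from the paper's argument in two of the key technical steps.

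For the strict inequality $u>u_{x,\bar\lambda}$ on $\{|y-x|>\bar\lambda\}\setminus\{0\}$, you discard the weight factor early, reducing \eqref{ABC-0hy2} to $u-u_{x,\lambda}\geq\int K\,(u^p-u_{x,\lambda}^p)|z|^{-\alpha}\,dz$, and then handle the degenerate possibility $u\equiv u_{x,\bar\lambda}$ by a separate exclusion. The paper never needs that dichotomy: in its Cases~1 and~2 it \emph{keeps} the weight factor and lower-bounds $u^p$ by $u_{x,\bar\lambda}^p$, arriving at $\int K\,[1-(\cdots)]\,u_{x,\bar\lambda}^p/|z|^\alpha\,dz>0$ with a pointwise positive bracket (strict since $\alpha>0$, or since $n+2\sigma-p(n-2\sigma)>0$), so strict positivity is automatic; non-removability enters only in Case~3, where the bracket degenerates to zero. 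Your route is also correct, but two points should be tightened. First, non-removability does \emph{not} literally say $u(\xi)\to+\infty$ as $\xi\to 0$; the correct exclusion is that $u\equiv u_{x,\bar\lambda}$ would force $u(\xi)=(\bar\lambda/|\xi-x|)^{n-2\sigma}u(\xi^{x,\bar\lambda})$ to tend to the finite limit $(\bar\lambda/|x|)^{n-2\sigma}u(0^{x,\bar\lambda})$ as $\xi\to 0$ (because $u$ is continuous at $0^{x,\bar\lambda}\neq 0$), which removes the singularity. Second, the ``Riesz injectivity'' step should be run on the exterior-only identity \eqref{ABC-0hy2} rather than on \eqref{Int=21xb0r1} and \eqref{ABC-01} directly: over all of $\mathbb{R}^n$ the weight difference $|z|^{-\alpha}-|z^{x,\bar\lambda}|^{-\alpha}(\bar\lambda/|z-x|)^{n+2\sigma-p(n-2\sigma)}$ changes sign across $\partial B_{\bar\lambda}(x)$, whereas on $\{|z-x|\geq\bar\lambda\}$ it is single-signed and the kernel $K$ is positive, so positivity of the integrand forces it to vanish. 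Once stated that way, the decay comparison ($\alpha$ versus $n+2\sigma-p(n-2\sigma)\geq 2\alpha$) correctly excludes the identity except when $\alpha=0,\ p=\frac{n+2\sigma}{n-2\sigma}$.

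For the continuation past $\bar\lambda$ on the thin shell, you propose the Hardy--Littlewood--Sobolev contraction used by the paper in Lemma~\ref{S3InLem02}, whereas the paper's proof of Lemma~\ref{LeM=002} uses a more delicate pointwise kernel estimate (Lipschitz bound $|u^p-u_{x,\lambda}^p|\leq C(|z-x|-\lambda)$ on the shell, $K(x,\lambda;y,z)\geq\delta_2(|y-x|-\lambda)$ for $z$ in a fixed distant annulus, etc.). Your shortcut is legitimate here because $\bar\lambda+\bar\delta<|x|$ keeps the shell compactly away from the origin, so $|\cdot|^{-\alpha}\max\{u^{p-1},u_{x,\lambda}^{p-1}\}$ is uniformly bounded on it (including when $0<p<1$, since $u\geq c>0$ there), and the $L^{n/(2\sigma)}$-norm over the shell goes to zero as the shell is thinned, which is exactly what the HLS contraction needs. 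The order of choices must be: first fix $\bar\delta$ small enough for the contraction constant, then fix $\varepsilon$ small so that the lower bound \eqref{Lhhdy6-03=001}-type estimate persists for $\lambda\in[\bar\lambda,\bar\lambda+\varepsilon]$ and confines the negativity set to that shell. With that bookkeeping in place, your HLS route gives a shorter proof of the step, in the style of the integral-equation moving-plane literature, while the paper's pointwise estimate is somewhat more self-contained.
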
 
\begin{proof}
Suppose $\bar{\lambda}(x) < |x|$ for some $x \in \mathbb{R}^{n} \backslash \{0\}$. For simplicity we write $\bar{\lambda}=\bar{\lambda}(x) $ and denote $ \bar{\delta}:=\min \{ 1, \frac{|x| - \bar{\lambda}}{2} \} >0$.  
By the definition of $\bar{\lambda}$, 
\begin{equation}\label{5CL6-01}
u_{x, \bar{\lambda} } (y) \leq u(y) ~~~~~~ \textmd{for} ~ \textmd{all} ~ |y - x|\geq \bar{\lambda}, ~ y \ne 0. 
\end{equation} 

We next will separate into three cases to show that $u_{x, \bar{\lambda} } (y) < u(y)$ for any $|y - x|\geq \bar{\lambda}$ with  $y \ne 0$, and there exists $\varepsilon_1 \in (0, 1)$ such that 
\begin{equation}\label{Lhhdy6-03=001}
(u - u_{x, \bar{\lambda}}) (y) \geq \frac{\varepsilon_1}{|y -x|^{n-2\sigma}} ~~~~~~  \textmd{for} ~ \textmd{all} ~  |y -x |\geq \bar{\lambda} +\bar{\delta}, ~ y \ne 0. 
\end{equation}

 {\it Case $1$}:  $0 < \alpha < 2\sigma$.  
Since $n+2\sigma - p(n-2\sigma) \geq 2\alpha$, we obtain that  for $|z - x|\geq \lambda >0$, 
$$
\left( \frac{\lambda}{|z- x |} \right)^{n+2\sigma - p(n-2\sigma)} \leq \left( \frac{\lambda}{|z- x |} \right)^{2\alpha}. 
$$
Therefore,   using \eqref{ABC-0hy2}, \eqref{5CL6-01},  Lemma \ref{S67_Le=001} and the positivity of the kernel $K$, we have, for any $|y -x|\geq \bar{\lambda}$ with $y \ne 0$,  
$$
\aligned
u(y) -u_{x,\bar{\lambda}}(y) & =\int_{|z-x|\geq \bar{\lambda}} K(x, \bar{\lambda}; y, z) \left[  \frac{u(z)^p}{|z|^{\alpha}}  - \left( \frac{\bar{\lambda}}{|z-x|} \right)^{n+2\sigma - p(n-2\sigma)} \frac{u_{x,\bar{\lambda}}(z)^p}{|z^{x,\bar{\lambda}}|^{\alpha}}  \right] dz \\
& \geq \int_{|z-x|\geq \bar{\lambda}} K(x, \bar{\lambda}; y, z) \left[  \frac{u(z)^p}{|z|^{\alpha}}  - \left( \frac{\bar{\lambda}}{|z-x|} \right)^{2\alpha} \frac{u_{x,\bar{\lambda}}(z)^p}{|z^{x,\bar{\lambda}}|^{\alpha}}  \right] dz \\
& \geq \int_{|z-x|\geq \bar{\lambda}} K(x, \bar{\lambda}; y, z) \Bigg[ 1  - \Bigg(  \frac{\bar{\lambda}^2 |z|}{|z-x|^2 \big| x + \frac{\bar{\lambda}^2 (z-x)}{|z-x|^2} \big|} \Bigg)^{\alpha} \Bigg]  \frac{u_{x,\bar{\lambda}}(z)^p}{|z|^{\alpha}}  dz \\
& >0. 
\endaligned
$$ 
This, together with Fatou Lemma, yields 
$$
\aligned
 &\liminf_{ |y| \to \infty}   |y-x|^{n-2\sigma} ( u -u_{x,\bar{\lambda}} )(y) \\
 & ~~ \geq   \liminf_{ |y| \to \infty}   
 \int_{|z -x|\geq \bar{\lambda}} |y-x|^{n-2\sigma}  K(x, \bar{\lambda}; y, z)  \Bigg[ 1  - \Bigg(  \frac{\bar{\lambda}^2 |z|}{|z-x|^2 \big| x + \frac{\bar{\lambda}^2 (z-x)}{|z-x|^2} \big|} \Bigg)^{\alpha} \Bigg]  \frac{u_{x,\bar{\lambda}}(z)^p}{|z|^{\alpha}}  dz  \\
 &~~ \geq \int_{|z -x|\geq \bar{\lambda}} \Bigg[ 1 - \left( \frac{\bar{\lambda}}{|z - x |} \right)^{n - 2\sigma} \Bigg] \Bigg[ 1  - \Bigg(  \frac{\bar{\lambda}^2 |z|}{|z-x|^2 \big| x + \frac{\bar{\lambda}^2 (z-x)}{|z-x|^2} \big|} \Bigg)^{\alpha} \Bigg]  \frac{u_{x,\bar{\lambda}}(z)^p}{|z|^{\alpha}}  dz >0. 
\endaligned
$$
Consequently, there exist $c_2>0$ and sufficiently large $R_2 >0$  such that 
\begin{equation}\label{5CL6-02}
( u -u_{x,\bar{\lambda}} )(y)  \geq \frac{c_2}{|y -x|^{n-2\sigma}}~~~~~~  \textmd{for} ~ \textmd{all} ~  |y -x |\geq R_2. 
\end{equation} 
On the other hand,  by the positivity of the kernel $K$ there exists $\beta_1>0$ such that for all $\bar{\lambda} + \bar{\delta} \leq |y -x| \leq R_2$ and $2R_2  \leq |z -x| \leq 4R_2$,   
\be\label{Del-33K=01}
K(x, \bar{\lambda}; y, z) \geq  \beta_1 >0. 
\ee
Thus,  for $\bar{\lambda} + \bar{\delta} \leq |y -x| \leq R_2$ and  $y \ne 0$,  we have
$$
\aligned
u(y) - u_{x,\bar{\lambda}}(y)  &\geq  \int_{|z-x|\geq \bar{\lambda}} K(x, \bar{\lambda}; y, z) \Bigg[ 1  - \Bigg(  \frac{\bar{\lambda}^2 |z|}{|z-x|^2 \big| x + \frac{\bar{\lambda}^2 (z-x)}{|z-x|^2} \big|} \Bigg)^{\alpha} \Bigg]  \frac{u_{x,\bar{\lambda}}(z)^p}{|z|^{\alpha}}  dz \\
& \geq  \int_{ 2R_2  \leq |z -x| \leq 4R_2 }  \beta_1 \Bigg[ 1  - \Bigg(  \frac{\bar{\lambda}^2 |z|}{|z-x|^2 \big| x + \frac{\bar{\lambda}^2 (z-x)}{|z-x|^2} \big|} \Bigg)^{\alpha} \Bigg]  \frac{u_{x,\bar{\lambda}}(z)^p}{|z|^{\alpha}}   dz \\
& =:C_2 >0. 
\endaligned
$$
 Combining this with \eqref{5CL6-02},  there exists $\varepsilon_1 \in (0, 1)$ such that
$$
(u - u_{x, \bar{\lambda}}) (y) \geq \frac{\varepsilon_1}{|y -x|^{n-2\sigma}} ~~~~~~  \textmd{for} ~ \textmd{all} ~  |y -x |\geq \bar{\lambda} +\bar{\delta}, ~ y \ne 0. 
$$

{\it Case $2$}:  $\alpha =0$ {\it and}  $0< p < \frac{n+2\sigma}{n-2\sigma}$.   Using \eqref{ABC-0hy2}, \eqref{5CL6-01} and the positivity of the kernel $K$, we have, for any $|y -x|\geq \bar{\lambda}$ with $y \ne 0$,  
$$
\aligned
u(y) -u_{x,\bar{\lambda}}(y) & =\int_{|z-x|\geq \bar{\lambda}} K(x, \bar{\lambda}; y, z) \left[  u(z)^p  - \left( \frac{\bar{\lambda}}{|z-x|} \right)^{n+2\sigma - p(n-2\sigma)} u_{x,\bar{\lambda}}(z)^p \right] dz \\
& \geq \int_{|z-x|\geq \bar{\lambda}} K(x, \bar{\lambda}; y, z) \left[ 1  - \left( \frac{\bar{\lambda}}{|z-x|} \right)^{n+2\sigma - p(n-2\sigma)} \right]   u_{x,\bar{\lambda}}(z)^p  dz \\
& >0. 
\endaligned
$$
By a similar argument as in Case 1, we obtain that \eqref{Lhhdy6-03=001} holds.   

{\it Case $3$}:  $\alpha =0$,  $p = \frac{n+2\sigma}{n-2\sigma}$  {\it and  $0$ is a non-removable singularity}.  In this case, we have $u_{x, \bar{\lambda}} (y) \not\equiv u(y)$.  By \eqref{ABC-0hy2}, \eqref{5CL6-01} and the positivity of the kernel $K$, we have, for any $|y -x|\geq \bar{\lambda}$ with $y \ne 0$,  
$$
u(y) -u_{x,\bar{\lambda}}(y) = \int_{|z-x|\geq \bar{\lambda}} K(x, \bar{\lambda}; y, z) \left[  u(z)^{\frac{n+2\sigma}{n-2\sigma}}  -  u_{x,\bar{\lambda}}(z)^{\frac{n+2\sigma}{n-2\sigma}} \right] dz >0.  
$$
By Fatou lemma, we have 
$$
\aligned
& \liminf_{ |y| \to \infty}    |y-x|^{n-2\sigma} ( u -u_{x,\bar{\lambda}} )(y) \\
& ~~ \geq \int_{|z -x|\geq \bar{\lambda}} \left[ 1 - \left( \frac{\bar{\lambda}}{|z - x |} \right)^{n - 2\sigma} \right] \left( u(z)^{\frac{n+2\sigma}{n-2\sigma}} - u_{x, \bar{\lambda}}(z)^{\frac{n+2\sigma}{n-2\sigma}} \right) dz >0. 
\endaligned
$$
An argument similar to Case 1 leads to that \eqref{Lhhdy6-03=001} holds.  

Therefore,  in any case, we have that \eqref{Lhhdy6-03=001} holds.   By \eqref{Lhhdy6-03=001} and the explicit formula of $u_{x, \lambda}$,  there exists a small $\varepsilon_2 \in (0,  \bar{\delta}/2)$ such that for any  $\bar{\lambda} \leq \lambda \leq \bar{\lambda} + \varepsilon_2 < |x|$,  
\begin{equation}\label{5CL6-04}
\aligned
(u - u_{x, \lambda}) (y) & \geq \frac{\varepsilon_1}{|y -x|^{n-2\sigma}} + ( u_{x, \bar{\lambda}} - u_{x, \lambda}) (y) \\
 & \geq  \frac{\varepsilon_1}{2|y -x|^{n-2\sigma}}  ~~~~~~ \forall ~  |y -x |\geq \bar{\lambda} + \bar{\delta},  ~ y \ne  0. 
\endaligned
\end{equation} 
For $\varepsilon \in (0, \varepsilon_2)$ which we choose below, using \eqref{ABC-0hy2} and Lemma \ref{S67_Le=001}, we have, for $\bar{\lambda} \leq \lambda \leq \bar{\lambda} + \varepsilon$ and $\lambda \leq |y -x| \leq \bar{\lambda} + \bar{\delta}$, 
$$
\aligned
u(y) - u_{x,\lambda}(y) 
& =\int_{|z-x|\geq \lambda} K(x, \lambda; y, z) \left[  \frac{u(z)^p}{|z|^{\alpha}}  - \left( \frac{\lambda}{|z-x|} \right)^{n+2\sigma - p(n-2\sigma)} \frac{u_{x,\lambda}(z)^p}{|z^{x,\lambda}|^{\alpha}}  \right] dz \\ 
&  \geq \int_{|z-x|\geq \lambda} K(x, \lambda; y, z) \Bigg[  \frac{u(z)^p}{|z|^{\alpha}}   - \Bigg(  \frac{\lambda^2 |z|}{|z-x|^2 \big| x + \frac{\lambda^2 (z-x)}{|z-x|^2} \big|} \Bigg)^{\alpha} \frac{u_{x,\lambda}(z)^p}{|z|^{\alpha}} \Bigg]    dz \\
&  \geq \int_{|z-x|\geq \lambda} K(x, \lambda; y, z)  \frac{u(z)^p - u_{x,\lambda}(z)^p}{|z|^{\alpha}}  dz,  \\
\endaligned
$$
and thus,  by  \eqref{5CL6-04} and  \eqref{5CL6-01}, 
$$
\aligned
u(y) - u_{x,\lambda}(y)  &  \geq  \int_{\lambda \leq |z -x| \leq \bar{\lambda} +\bar{\delta}}  K(x, \lambda; y, z)  \frac{u(z)^p - u_{x,\lambda}(z)^p}{|z|^{\alpha}}  dz \\
&~~~~  + \int_{\bar{\lambda} + 2 \leq |z -x| \leq \bar{\lambda} +3}  K(x, \lambda; y, z)  \frac{u(z)^p - u_{x,\lambda}(z)^p}{|z|^{\alpha}} dz \\
& \geq -C \int_{\lambda \leq |z -x| \leq \lambda +\varepsilon}  K(x,\lambda; y, z) \frac{|z -x| -\lambda}{|z|^\alpha} dz \\ 
&~~~~ + \int_{\lambda+\varepsilon <  |z -x| \leq \bar{\lambda} +\bar{\delta} }  K(x,\lambda; y, z)   \frac{u_{x,\bar{\lambda}}(z)^{p} - u_{x, \lambda}(z)^{p}}{|z|^\alpha}   dz \\
&~~~~ + \int_{\bar{\lambda} + 2 \leq |z -x| \leq \bar{\lambda} +3}  K(x,\lambda; y, z) \frac{ u(z)^{p} - u_{x,\lambda}(z)^{p}}{|z|^\alpha} dz,
\endaligned
$$
where we have used
$$
| u(z)^{p} - u_{x,\lambda}(z)^{p} |\leq C (|z -x| -\lambda) 
$$
in the second inequality. By \eqref{5CL6-04}, there exists $\delta_1 >0$ such that for any  $\bar{\lambda} \leq  \lambda \leq \bar{\lambda} +\varepsilon$, 
$$
u(z)^{p} - u_{x,\lambda}(z)^{p} > \delta_1 ~~~~~~\forall ~\bar{\lambda} +2 \leq |z-x| \leq \bar{\lambda} +3,~ z \ne 0.
 $$
Since $u \in C^1(B_{\bar{\lambda} + \varepsilon_2}(x))$, there exists $C>0$ (independent of $\varepsilon$) such that for all $\bar{\lambda} \leq  \lambda \leq \bar{\lambda} +\varepsilon$,
 $$
 | u_{x, \bar{\lambda}}(z)^{p} - u_{x,\lambda}(z)^{p} | \leq C (\lambda - \bar{\lambda}) \leq C\varepsilon ~~~~~~\forall ~ \bar{\lambda} \leq  \lambda  \leq |z-x| \leq \bar{\lambda} + \bar{\delta}. 
 $$
Moreover, we have, for $\bar{\lambda} \leq  \lambda \leq \bar{\lambda} +\varepsilon$ and for $\lambda \leq |y -x| \leq \bar{\lambda}+\bar{\delta}$, 
$$
\aligned
\int_{\lambda+\varepsilon  \leq |z-x| \leq \bar{\lambda} +\bar{\delta}}  \frac{K(x,\lambda; y, z)}{|z|^\alpha}  dz &  \leq C \left|  \int_{\lambda+\varepsilon \leq |z -x| \leq \bar{\lambda} +\bar{\delta}}  \left( \frac{1}{|y -z|^{n-2\sigma}} -  \frac{1}{|y^{x,\lambda} -z|^{n-2\sigma}}\right) dz  \right| \\
& ~~~~ + C\int_{\lambda+\varepsilon \leq |z-x| \leq \bar{\lambda} +\bar{\delta}}  \left|  \left(\frac{\lambda}{|y-x|}\right)^{n-2\sigma} -1 \right|   \frac{1}{|y^{x,\lambda} -z|^{n-2\sigma}} dz\\
& \leq C (\varepsilon^{2\sigma-1}  +  |\ln \varepsilon|  +1) (|y-x| - \lambda) 
\endaligned
$$
and 
$$
\aligned
\int_{ \lambda \leq |z-x| \leq \lambda +\varepsilon }   K(x,\lambda; y, z) \frac{|z-x| -\lambda}{|z|^\alpha} dz & \leq C\left|  \int_{ \lambda \leq |z-x| \leq \lambda +\varepsilon }   \left( \frac{|z-x| -\lambda}{|y -z|^{n-2\sigma}} -  \frac{|z-x| -\lambda}{|y^{x,\lambda} -z|^{n-2\sigma}}\right) dz  \right| \\
& ~~~ + \varepsilon C\int_{ \lambda \leq |z-x| \leq \lambda +\varepsilon}  \left|  \left(\frac{\lambda}{|y-x|}\right)^{n-2\sigma} -1 \right|   \frac{1}{|y^{x,\lambda} -z|^{n-2\sigma}} dz\\
& \leq C (|y-x| - \lambda) \varepsilon^{2\sigma/n} + C\varepsilon(|y-x| - \lambda) \\
& \leq C (|y-x| - \lambda) \varepsilon^{2\sigma/n}. 
\endaligned
$$
On the other hand, since $K(x, \lambda; y ,z) =0$ for $y \in \partial B_\lambda(x)$ and 
$$
\langle \nabla_y K(x, \lambda; y, z), y-x \rangle \big|_{|y-x|=\lambda} = (n-2\sigma) |y -z|^{2\sigma-n-2} (|z - x|^2 - |y -x|^2) >0  
$$
for all  $\bar{\lambda} + 2 \leq |z-x| \leq \bar{\lambda} +3$,   and using the positivity of the kernel $K$,  we obtain that,  for $\bar{\lambda} \leq  \lambda \leq |y -x| \leq \bar{\lambda}+ \bar{\delta}$,    
$$
K(x, \lambda; y ,z) \geq \delta_2 (|y-x| - \lambda) ~~~~~~~ \forall ~ \bar{\lambda} + 2 \leq |z-x| \leq \bar{\lambda} +3,
$$
where $\delta_2>0$ is independent of $\varepsilon$. Thus, it follows from the above that for $\bar{\lambda} \leq \lambda \leq \bar{\lambda} + \varepsilon$ and for $\lambda \leq |y -x| \leq \bar{\lambda} + \bar{\delta}$,  
$$
\aligned
u(y) - u_{x,\lambda}(y)  \geq \left( -C \varepsilon^{\frac{2\sigma}{n}}  + \delta_1 \delta_2 \int_{\bar{\lambda} + 2 \leq |z -x| \leq \bar{\lambda} +3}  \frac{ 1 }{|z|^\alpha} dz \right) ( |y -x| -\lambda) \geq 0
\endaligned
$$
if $\varepsilon$ is chosen sufficiently small.  This and \eqref{5CL6-04} contradict the definition of $\bar{\lambda}$.  Lemma \ref{LeM=002} is established.  
\end{proof}

\noindent{\it Proof of Theorem \ref{THM=03}}.  
We first assume that $0 < p \leq \frac{n+2\sigma-2\alpha}{n-2\sigma}$.  Let $u \in C(\mathbb{R}^n \backslash \{0\})$ be a nonnegative solution of \eqref{Int=21xb0r1}.  If $u(x_0) =0 $ for some $x_0 \in \mathbb{R}^n \backslash \{0\}$, then 
$$
\int_{\mathbb{R}^n} \frac{u(y)^{p}}{|y|^{\alpha} |x_0-y|^{n-2\sigma}} dy =0, 
$$
which implies that $u \equiv 0$ on $\mathbb{R}^n \backslash \{0\}$.  Next we suppose that $u \in C(\mathbb{R}^n \backslash \{0\})$ be a positive solution of \eqref{Int=21xb0r1}. It follows from Lemma \ref{LeM=002} that for every $x\in \mathbb{R}^n \backslash \{0\}$, 
\begin{equation}\label{SS555}
u_{x,\lambda}(y) \leq u(y)~~~~~~ \forall ~ |y - x| \geq \lambda, ~y \ne 0, ~~\forall ~ 0< \lambda< |x|. 
\end{equation}
For  any unit vector $e \in \mathbb{R}^{n}$,  for any $a >0$, for any $ z\in \mathbb{R}^{n}\backslash \{0\}$  satisfying $(z - ae ) \cdot e <0$, and for any $R >a$, we have, by \eqref{SS555} with $x=Re$ and $\lambda = R - a$, 
$$
u(z)\geq u_{x,\lambda}(z)=\left( \frac{\lambda}{|z - x|}\right)^{n-2\sigma} u\left( x + \frac{\lambda^2(z -x)}{|z -x|^2} \right).
$$
Sending $R$ to infinity in the above,  we obtain 
\begin{equation}\label{SS666}
u(z) \geq u(z - 2( z\cdot e  - a )e ).
\end{equation}
Since $e \in\mathbb{R}^{n}$ and $a>0$ are arbitrary, \eqref{SS666} implies that $u$ is radially symmetric about the origin. Moreover,  \eqref{SS666} also gives
$$
u(z)=u(z_1, z_2, \dots, z_n) \geq u_a(z):=u(2a - z_1, z_2, \dots, z_n)~~~~~ \forall~ z_1\leq a,~ a>0.
$$
Thus,   $u$ is also monotonically decreasing about the origin.

Next we consider the case $\frac{n+2\sigma-2\alpha}{n-2\sigma} < p \leq \frac{n+2\sigma -\alpha}{n-2\sigma}$ and $0 < \alpha <2\sigma$. Let  $u \in C(\mathbb{R}^n \backslash \{0\})$ be a positive solution of \eqref{Int=21xb0r1}. Define 
$$
v(x) =\left( \frac{1}{|x|} \right)^{n-2\sigma} u\left( \frac{x}{|x|^2} \right) ~~~~~ \textmd{for}  ~ x\ne 0.  
$$
By \eqref{ABC-01}, $v$ is a positive solution to the following integral equation    
$$
v(x)= \int_{\mathbb{R}^n} \frac{v(y)^p}{|y|^{\theta} |x - y|^{n-2\sigma}} dy  
$$
with $\theta:= n+2\sigma -\alpha - p(n-2\sigma)$. Note that the new exponent $\theta$ satisfies 
$$
\aligned
0\leq \theta  & \iff p\leq \frac{n+2\sigma -\alpha}{n-2\sigma}, \\
\theta < \alpha & \iff \frac{n+2\sigma -2\alpha}{n-2\sigma} <p ~~ (\textmd{hence} ~ \theta < 2\sigma), \\
p < \frac{n+2\sigma -2\theta}{n-2\sigma} & \iff  p >  \frac{n+2\sigma -2\alpha}{n-2\sigma}. 
\endaligned 
$$
From the previous case, we know that $v$ is radially symmetric about the origin, and hence, $u$ is also radially symmetric about the origin.   Theorem \ref{THM=03}  is established.   
\hfill$\square$

\vskip0.10in

\noindent{\it Proof of Theorem \ref{THM=01}. } It follows from Theorems \ref{THM=02} and \ref{THM=03}.   
\hfill$\square$


\begin{thebibliography}{10}
\bibitem{Ann} S. Antman, Nonlinear problems of elasticity. Second edition. {\it Applied Mathematical Sciences,} {\bf 107}. Springer, New York, 2005. 
 
\bibitem{AW1} W. Ao, H. Chan, A. DelaTorre, M. Fontelos, M. Gonz\'{a}lez, J. Wei, On higher-dimensional singularities for the fractional Yamabe problem: a nonlocal Mazzeo-Pacard program. {\it Duke Math. J.} {\bf 168} (2019), no. 17, 3297-3411.  

\bibitem{AGHW} W. Ao, M. Gonz\'{a}lez, A. Hyder, J. Wei, Removability of singularities and superharmonicity for some fractional Laplacian equations. {\it Indiana Univ. Math. J.} {\bf 71} (2022), no. 2, 735-766.  

\bibitem{App} D.  Applebaum,  L\'evy processes and stochastic calculus. Second edition. {\it Cambridge Studies in Advanced Mathematics}, {\bf 116}. Cambridge University Press, Cambridge, 2009.  

\bibitem{A} T. Aubin, Probl\`emes isop\'erim\'etriques et espaces de Sobolev. {\it J. Differential Geom.} {\bf 11} (1976), no. 4, 573-598.  

\bibitem{BQ} B. Barrios, A. Quaas, The sharp exponent in the study of the nonlocal H\'enon equation in $\mathbb{R}^N$:  a Liouville theorem and an existence result. {\it Calc. Var. Partial Differential Equations} {\bf 59} (2020), no. 4, Paper No. 114, 22 pp. 
 
\bibitem{BG} J.-P.  Bouchaud,  A. Georges, Anomalous diffusion in disordered media:  statistical mechanisms, models and physical applications, {\it Phys. Rep.} {\bf 195} (1990), 127-293. 

\bibitem{CGS} L. Caffarelli, B. Gidas, J. Spruck,  Asymptotic symmetry and local behavior of semilinear elliptic equations with critical Sobolev growth.   {\it Comm. Pure Appl. Math.} {\bf 42} (1989), no. 3, 271-297. 

\bibitem{CSS} L. Caffarelli, S. Salsa, L. Silvestre, Regularity estimates for the solution and the free boundary of the obstacle problem for the fractional Laplacian. {\it Invent. Math.} {\bf 171} (2008), no. 2, 425-461. 

\bibitem{CJSX} L. Caffarelli, T. Jin, Y. Sire, J. Xiong, Local analysis of solutions of fractional semi-linear elliptic equations with isolated singularities.  {\it Arch. Ration. Mech. Anal.} {\bf 213} (2014), no.1,  245-268. 

\bibitem{CS} L. Caffarelli, L. Silvestre, An extension problem related to the fractional Laplacian. {\it Comm. Partial Differential Equations} {\bf 32} (2007), no. 7-9, 1245-1260.  

\bibitem{CV} L. Caffarelli,  A. Vasseur,  Drift diffusion equations with fractional diffusion and the quasi-geostrophic equation, {\it  Ann. of Math. (2)} {\bf 171} (2010), no. 3, 1903-1930. 

\bibitem{CDQ} D. Cao,  W. Dai,  G. Qin,  Super poly-harmonic properties, Liouville theorems and classification of nonnegative solutions to equations involving higher-order fractional Laplacians. {\it Trans. Amer. Math. Soc.} {\bf 374} (2021), no. 7,  4781-4813. 

\bibitem{CaLi} D. Cao, Y.Y. Li,  Results on positive solutions of elliptic equations with a critical Hardy-Sobolev operator. {\it Methods Appl. Anal.} {\bf 15}  (2008), no. 1, 81-95.  

\bibitem{CC} J. Case, S.-Y. A. Chang, On fractional GJMS operators. {\it Comm. Pure Appl. Math.} {\bf 69} (2016), no. 6, 1017-1061.   

\bibitem{CG} S.-Y. A. Chang, M.  Gonz\'{a}lez, Fractional Laplacian in conformal geometry. {\it Adv. Math.} {\bf 226} (2011), no. 2, 1410-1432. 

\bibitem{CY} S.-Y. A. Chang, P. C. Yang, On uniqueness of solutions of $n$-th order differential equations in conformal geometry. {\it Math. Res. Lett.} {\bf 4} (1997), no. 1, 91-102. 

\bibitem{CL} W. Chen, C. Li, Classification of solutions of some nonlinear elliptic equations. {\it Duke Math. J.} {\bf 63} (1991), no. 3, 615-622. 

\bibitem{CLL17} W. Chen, C. Li, Y. Li, A direct method of moving planes for the fractional Laplacian. {\it Adv. Math.} {\bf 308} (2017), 404-437. 

\bibitem{CLO06} W. Chen, C. Li, B. Ou, Classification of solutions for an integral equation. {\it Comm. Pure Appl. Math.} {\bf 59} (2006), no. 3, 330-343. 

\bibitem{CLO05} W. Chen, C. Li, B. Ou, Qualitative properties of solutions for an integral equation. {\it Discrete Contin. Dyn. Syst.} {\bf 12} (2005), no. 2,  347-354.  

\bibitem{ChengLiu} T. Cheng, S. Liu, A Liouville type theorem for higher order Hardy-H\'enon equation in $\mathbb{R}^n$. {\it J. Math. Anal. Appl.} {\bf 444} (2016), 370-389. 

\bibitem{Con} P. Constantin,  Euler equations, Navier-Stokes equations and turbulence. Mathematical foundation of turbulent viscous flows, 1-43, {\it Lecture Notes in Math.,} {\bf 1871}, Springer, Berlin, 2006. 

\bibitem{ContT} R. Cont, P. Tankov, Financial modelling with jump processes. {\it Chapman \&  Hall/CRC Financial Mathematics Series}. Chapman \& Hall/CRC, Boca Raton, FL, 2004.   

\bibitem{DPQ} W. Dai, S. Peng, G. Qin, Liouville type theorems, a priori estimates and existence of solutions for sub-critical order Lane-Emden-Hardy equations. {\it J. Anal. Math.} {\bf 146} (2022), no. 2, 673-718. 

\bibitem{DQ} W. Dai,  G.  Qin, Liouville type theorems for fractional and higher order H\'enon-Hardy type equations via the method of scaling spheres. {\it Int. Math. Res. Not. IMRN} (2023), no. 11, 9001-9070.
 
\bibitem{DPGW} A.  DelaTorre, M. del Pino,  M. Gonz\'{a}lez, J. Wei, Delaunay-type singular solutions for the fractional Yamabe problem. {\it Math. Ann.} {\bf 369} (2017), 597-626. 

\bibitem{DFV} S. Dipierro, A. Figalli,  E. Valdinoci, Strongly nonlocal dislocation dynamics in crystals. {\it Comm. Partial Differential Equations} {\bf 39} (2014), no. 12, 2351-2387. 

\bibitem{DG} S. Dipierro, H. C. Grunau, Boggio's formula for fractional polyharmonic Dirichlet problems. {\it Ann. Mat. Pura Appl. (4)} {\bf 196} (2017), no. 4, 1327-1344. 

\bibitem{DZ} J. Dou, H. Zhou, Liouville theorems for fractional H\'enon equation and system on $\mathbb{R}^n$. {\it Commun. Pure Appl. Anal.} {\bf 14} (2015), no. 5, 1915-1927.

\bibitem{DY} X. Du, H. Yang,  Local behavior of positive solutions of higher order conformally invariant equations with a singular set. {\it Calc. Var. Partial Differential Equations} {\bf 60} (2021), no. 6, Paper No. 204. 

\bibitem{Fow} R. H. Fowler, Further studies of Emden's and similar differential equations. {\it Q. J. Math.} Oxf. Ser. {\bf 2} (1931), 259-288. 

\bibitem{Fr-K19}  R. Frank,  T. K\"{o}nig, Classification of positive solutions to a nonlinear biharmonic equation with critical exponent. {\it Anal. PDE} {\bf 12} (2019), no. 4, 1101-1113. 

\bibitem{GG} F. Gazzola, H.-C. Grunau, Radial entire solutions for supercritical biharmonic equations. {\it Math. Ann.} {\bf 334} (2006), no. 4, 905-936. 

\bibitem{GNN} B, Gidas, W.-M. Ni, L. Nirenberg, Symmetry and related properties via the maximum principle. {\it Comm. Math. Phys.} {\bf 68} (1979), no. 3, 209-243.   

\bibitem{GS} B. Gidas, J. Spruck, Global and local behavior of positive solutions of nonlinear elliptic equations. {\it Comm. Pure Appl. Math.} {\bf 34} (1981), no. 4, 525-598. 

\bibitem{GMGT} V. Glaser, A. Martin, H. Grosse, W. Thirring, A family of optimal conditions for the absence of bound states in a potential, in: E.H. Lieb,
B. Simon, A.S. Wightman (Eds.), Studies in Mathematical Physics, Princeton University Press, 1976, pp. 169-194. 

\bibitem{G} G. Grubb, Fractional Laplacians on domains, a development of H\"{o}rmander's theory of $\mu$-transmission pseudodifferential operators. {\it Adv. Math.} {\bf 268} (2015), 478-528. 

\bibitem{GHWW} Z. Guo, X. Huang, L. Wang, J. Wei, On Delaunay solutions of a biharmonic elliptic equation with critical exponent. {\it J. Anal. Math.} {\bf 140} (2020), no. 1, 371-394.  

\bibitem{GW} Z. Guo, F. Wan, Further study of a weighted elliptic equation. {\it Sci. China Math.} {\bf 60} (2017), no. 12, 2391-2406. 

\bibitem{HLT} Z.-C. Han,  Y.Y. Li,  E. V. Teixeira, Asymptotic behavior of solutions to the $\sigma_k$-Yamabe equation near isolated singularities. {\it Invent. Math.} {\bf 182} (2010), no. 3, 635-684. 

\bibitem{H} M. H\'enon, Numerical experiments on the stability of spherical stellar systems. {\it Astron. Astrophys.} {\bf 24} (1973), 229-238. 


\bibitem{HS} A. Hyder, Y. Sire, Singular solutions for the constant $Q$-curvature problem. {\it J. Funct. Anal.} {\bf 280} (2021), no. 3, Paper No. 108819, 39 pp. 

\bibitem{Jin} T. Jin, Symmetry and nonexistence of positive solutions of elliptic equations and systems with Hardy terms. {\it Ann. Inst. H. Poincar\'e Anal. Non Lin\'eaire} {\bf 28} (2011), no. 6, 965-981.

\bibitem{JLX} T. Jin, Y.Y. Li, J. Xiong, On a fractional Nirenberg problem, part I: blow up analysis and compactness of solutions. {\it J. Eur. Math. Soc. (JEMS)} {\bf 16} (2014), no. 6, 1111-1171. 

\bibitem{JLX17} T. Jin, Y.Y. Li, J. Xiong, The Nirenberg problem and its generalizations: a unified approach.  {\it Math. Ann.} {\bf 369} (2017), no. 1-2, 109-151.  

\bibitem{JX19}  T. Jin, J. Xiong,  Asymptotic symmetry and local behavior of solutions of higher order conformally invariant equations with isolated singularities. {\it Ann. Inst. H. Poincar\'e Anal. Non Lin\'eaire} {\bf 38} (2021), no. 4, 1167-1216.  

\bibitem{KMPS} N. Korevaar, R. Mazzeo, F. Pacard, R. Schoen, Refined asymptotics for constant scalar curvature metrics with isolated singularities.   {\it Invent. Math.} {\bf 135} (1999), no. 2, 233-272. 

\bibitem{LV} C. Li, J. Villavert, Existence of positive solutions to semilinear elliptic systems with supercritical growth. {\it Comm. Partial Differential Equations} {\bf 41} (2016), no. 7, 1029-1039.

\bibitem{Li04} Y.Y. Li,  Remark on some conformally invariant integral equations: the method of moving spheres. {\it J. Eur. Math. Soc. (JEMS)} {\bf 6} (2004), no. 2, 153-180. 

\bibitem{LZ03} Y.Y. Li, L. Zhang, Liouville-type theorems and Harnack-type inequalities for semilinear elliptic equations. {\it J. Anal. Math.} {\bf 90} (2003), 27-87.  

\bibitem{Li-Zhu} Y.Y. Li, M. Zhu, Uniqueness theorems through the method of moving spheres. {\it Duke Math. J.} {\bf 80} (1995), no. 2, 383-417. 

\bibitem{LB} Y. Li, J. Bao, Fractional Hardy-H\'enon equations on exterior domains.  {\it J. Differential Equations} {\bf 266} (2019), no. 2-3, 1153-1175. 

\bibitem{Lieb} E. H. Lieb, Sharp constants in the Hardy-Littlewood-Sobolev and related inequalities. {\it Ann. of Math. (2)} {\bf 118} (1983), no. 2, 349-374.  

\bibitem{Lin98} C.-S. Lin,  A classification of solutions of a conformally invariant fourth order equation in $\mathbb{R}^n$. {\it Comment. Math. Helv.} {\bf 73} (1998), 206-231. 

\bibitem{Lions} P.-L. Lions, The concentration-compactness principle in the calculus of variations. The limit case. II. {\it Rev. Mat. Iberoamericana} {\bf 1} (1985), no. 2, 45-121. 

\bibitem{LGZ} J. Liu, Y. Guo, Y. Zhang, Existence of positive entire solutions for polyharmonic equations and systems. {\it J. Partial Differential Equations} {\bf 19} (2006), no. 3, 256-270. 

\bibitem{LZ} G. Lu, J. Zhu, Symmetry and regularity of extremals of an integral equation related to the Hardy-Sobolev inequality. {\it Calc. Var. Partial Differential Equations} {\bf 42} (2011), no. 3-4, 563-577. 

\bibitem{MP} R. Mazzeo, F. Pacard, A construction of singular solutions for a semilinear elliptic equation using asymptotic analysis. {\it J. Differential Geom.} {\bf 44} (1996), no. 2, 331-370. 

\bibitem{MN}  R. Musina, A. I. Nazarov, Complete classification and nondegeneracy of minimizers for the fractional Hardy-Sobolev inequality, and applications. {\it J. Differential Equations} {\bf 280} (2021), 292-314. 

\bibitem{NY} Q. A. Ng\^{o}, D. Ye, Existence and non-existence results for the higher order Hardy-H\'enon equations revisited. {\it J. Math. Pures Appl. (9)} {\bf 163} (2022), 265-298.  

\bibitem{N1} W.-M. Ni, On the elliptic equation $\Delta u + K(x)u^{(n+2)/(n-2)}=0$, its generalizations, and applications in geometry. {\it Indiana Univ. Math. J.} {\bf 31} (1982), no. 4, 493-529.  

\bibitem{Ra} P. Rabinowitz, Some aspects of nonlinear eigenvalue problems. {\it Rocky Mountain J. Math.} {\bf 3} (1973), 161-202. 

\bibitem{RZ} W. Reichel, H. Zou, Non-existence results for semilinear cooperative elliptic systems via moving spheres. {\it J. Differential Equations} {\bf  161} (2000), no. 1, 219-243.  

\bibitem{RS1} X. Ros-Oton, J. Serra, Local integration by parts and Pohozaev identities for higher order fractional Laplacians. {\it Discrete Contin. Dyn. Syst.} {\bf 35} (2015), no. 5, 2131-2150. 

\bibitem{RS2} X. Ros-Oton, J. Serra, The Pohozaev identity for the fractional Laplacian. {\it Arch. Ration. Mech. Anal.} {\bf 213} (2014), no. 2, 587-628. 

\bibitem{RS3} X. Ros-Oton, J. Serra, The extremal solution for the fractional Laplacian. {\it Calc. Var. Partial Differential Equations} {\bf 50} (2014), no. 3-4, 723-750. 
 
\bibitem{Sch} R. Schaaf, Uniqueness for semilinear elliptic problems: supercritical growth and domain geometry. {\it Adv. Differential Equations} {\bf 5} (2000), no. 10-12, 1201-1220. 

\bibitem{Sch88} R. Schoen, The existence of weak solutions with prescribed singular behavior for a conformally invariant scalar equation.  {\it  Comm. Pure Appl. Math.} {\bf 41} (1988), no. 3, 317-392.

\bibitem{SY88} R. Schoen, S.-T. Yau, Conformally flat manifolds, Kleinian groups and scalar curvature. {\it Invent. Math.} {\bf 92} (1988), no. 1, 47-71. 

\bibitem{Silv} L. Silvestre, Regularity of the obstacle problem for a fractional power of the Laplace operator. {\it Comm. Pure Appl. Math.} {\bf 60}  (2007), no. 1, 67-112.  

\bibitem{T} G. Talenti, Best constant in Sobolev inequality. {\it Ann. Mat. Pura Appl.} {\bf 110} (1976), 353-372.   

\bibitem{WLH} W. Wang,  K. Li,  L. Hong,  Nonexistence of positive solutions of $-\Delta u =K(x) u^p$ in $\mathbb{R}^n$. {\it Appl. Math. Lett.} {\bf 18} (2005), no. 3, 345-351.  

\bibitem{WX}  J. Wei,  X. Xu, Classification of solutions of higher order conformally invariant equation.   {\it Math. Ann.} {\bf 313} (1999), 207-228. 

\bibitem{Y} H. Yang, Asymptotic behavior of positive solutions to a nonlinear biharmonic equation near isolated singularities. {\it Calc. Var. Partial Differential Equations} {\bf 59} (2020), no. 4, Paper No. 130, 21 pp.  

\bibitem{YZa1} H. Yang, W. Zou, Exact asymptotic behavior of singular positive solutions of fractional semi-linear elliptic equations. {\it Proc. Amer. Math. Soc.} {\bf 147} (2019), no. 7, 2999-3009. 

\bibitem{YZa2} H. Yang, W. Zou, On isolated singularities of fractional semi-linear elliptic equations. {\it Ann. Inst. H. Poincar\'e Anal. Non Lin\'eaire} {\bf 38} (2021), no. 2, 403-420.  

\bibitem{YZa3} H. Yang, W. Zou,  Sharp blow up estimates and precise asymptotic behavior of singular positive solutions to fractional Hardy-H\'enon equations. {\it J. Differential Equations} {\bf 278} (2021), 393-429. 

\bibitem{YJ} J. Yang, Fractional Sobolev-Hardy inequality in $\mathbb{R}^N$. {\it Nonlinear Anal.} {\bf 119} (2015), 179-185. 

\bibitem{ZCCY} R. Zhuo, W. Chen, X. Cui,  Z. Yuan, Symmetry and non-existence of solutions for a nonlinear system involving the fractional Laplacian. {\it  Discrete Contin. Dyn. Syst.} {\bf 36} (2016), no. 2, 1125-1141.  

\end{thebibliography}
\end{document}